\begin{document}

\font\grg=eurm10
\def\umu{{\hbox{\grg\char22}}}
\font\grs=eurm10 at 9pt
\def\smu{{\hbox{\grs\char22}}}

\topmargin+30mm

\newcommand{\emkost}{\text{\rm cap}}
\newcommand{\s}{\vspace{0.1cm}}
\newcommand{\ess}{\text{\rm ess }}
\newcommand{\re}{\text{\rm Re}\,}
\newcommand{\rey}{\text{\sc R}\,}
\newcommand{\im}{\text{\rm Im}\,}
\newcommand{\id}{\text{\rm id}}
\newcommand{\res}{\text{\rm Res}\,}
\newcommand{\inte}{\text{\rm int}}
\newcommand{\K}{\text{\bf K}\,}
\newcommand{\Lip}{\text{\rm Lip}\,}
\newcommand{\emk}{\text{\rm cap}\,}
\newcommand{\Fb}{\text{\rm \bf F}}
\newcommand{\Vb}{\text{\rm \bf V}\,}
\newcommand{\Ub}{\text{\rm \bf U}\,}
\newcommand{\Eb}{\text{\rm \bf E}\,}
\newcommand{\Pb}{\text{\rm \bf P}\,}
\newcommand{\xb}{\text{\rm \bf x}\,}
\newcommand{\nb}{\text{\rm \bf n}\,}
\newcommand{\sn}{\text{\rm \bf sn}\,}
\newcommand{\cn}{\text{\rm \bf cn}\,}
\newcommand{\dn}{\text{\rm \bf dn}\,}
\newcommand{\tn}{\text{\rm \bf tn}\,}
\newcommand{\cd}{\text{\rm \bf cd}\,}
\newcommand{\dime}{\text{\rm dim}\,}
\newcommand{\orde}{\text{\rm ord}\,}
\newcommand{\Hol}{\text{\rm Hol}\,}
\newcommand{\Ker}{\text{\rm Ker}\,}
\newcommand{\pv}{\text{\bf p.v.}}
\newcommand{\Beta}{\text{\bf B}}
\newcommand{\Diff}{\text{\rm Diff }}
\newcommand{\Rot}{\text{\rm Rot }}
\newcommand{\Vect}{\text{\rm Vect }}
\newcommand{\Bal}{\text{\rm Bal}\,}
\newcommand{\hull}{\text{\rm conv}\,}
\newcommand{\dist}{\text{\rm dist}\,}
\newcommand{\Ha}{\mathbb{H}}

\newcommand{\eq}{\eqref}
\newcommand{\const}{{\rm{const.}}}
\newcommand{\supp}{{\rm{supp}}}
\newcommand{\cp}{{\rm{cap}}}

\def\D{{\mathbb D}}
\def\R{{\mathbb R}}
\def\C{{\mathbb C}}
\def\P{{\mathbb P}}
\def\Z{{\mathbb Z}}
\def\N{{\mathbb N}}

\def\RE{{\Re}}
\def\MM{{\mathcal{M}}}

\theoremstyle{plain}

\newtheorem{theorem}{Theorem}[section]
\newtheorem{corollary}[theorem]{Corollary}
\newtheorem{proposition}[theorem]{Proposition}
\newtheorem{lemma}[theorem]{Lemma}
\newtheorem{conjecture}[theorem]{Conjecture}

\def\la{\label}
\def\be{\begin{equation}}
\def\beq{\begin{equation}}
\def\eeq{\end{equation}}
\def\ee{\end{equation}}
\def\bea{\begin{eqnarray}}
\def\eea{\end{eqnarray}}
\def\p{\partial}

\newcommand{\ii}{{{i}}}
\newcommand{\dd}{{{d}}}
\newcommand{\sd}{{{d}}}

\newcommand{\Vir}{{\text{\rm Vir}\,}}
\newcommand{\spn}{\text{span}\,} 
\newcommand{\Gr}{\text{Gr}\,}
\newcommand{\kernel}{\text{ker}\,}
\newcommand{\image}{{\text{im}\,}}
\newcommand{\cokernel}{\text{coker}\,}
\newcommand{\virtcard}{{\text{virtcard}\,}}
\newcommand{\virtdim}{{\text{virtdim}\,}}
\newcommand{\ind}{{\text{ind}\,}}
\newcommand{\tb}{\text{\bf t}}


\def\jac{\mathcal{R}}

\def\dif#1{{\frac{d^{#1}}{dx^{#1}}}\,}
\def\I{\mathrm{i}}
\def\Bin#1#2{C_{#1}^{#2}}
\def\scal#1#2{\langle #1, #2\rangle }


\def\Log{{\rm Log\,}}
\def\T{{\mathbb T}}
\def\D{{\mathbb D}}
\def\R{{\mathbb R}}
\def\C{{\mathbb C}}
\def\P{{\mathbb P}}
\def\Z{{\mathbb Z}}
\def\N{{\mathbb N}}
\def\DD{{\mathcal{D}}}
\def\OO{{mathcal{O}}}
\def\Wilde{}
\def\spect#1{\rho(#1)}



\title{Quadrature domains packing}

\author{
Bj\"orn Gustafsson\textsuperscript{1},
Mihai Putinar\textsuperscript{2}}

\footnotetext[1]
{Department of Mathematics, KTH, 100 44, Stockholm, Sweden.
Email: \tt{gbjorn@kth.se}}
\footnotetext[2]
{Department of Mathematics, University of California at Santa Barbara, Santa Barbara, CA 93106-3080, USA.
Email: {\tt{mputinar@math.ucsb.edu}}, School of Mathematics Statistics and Physics, Newcastle University, Newcastle upon Tyne,
NE1 7RU, U. K. Email: {\tt{mihai.putinar@ncl.ac.uk}}}

\date{\today}
\maketitle

{To Ed Saff, with best wishes on the occasion of his eightieth birthday}
\bigskip

\begin{abstract} Given a finite family of compact subsets of the complex plane we propose a certificate of mutual non-overlapping with respect to area measure.
The criterion is stated as a couple of positivity conditions imposed on a four argument analytic/anti-analytic kernel defined in a neighborhood of infinity. In case the compact sets are closures of quadrature domains the respective kernel is rational, enabling an effective matrix analysis algorithm for the non-overlapping decision. The simplest situation of two disks is presented in detail from a matrix model perspective as well as from a Riemann surface potential theoretic interpretation.

\end{abstract}


\section{Introduction}\label{sec:intro}

An analysis of Bergman space orthogonal polynomials on an ``archipelago", that is a collection of disjoint ``islands" contained in the complex plane, was carried out by the two of us in a 2009 joint paper with Ed Saff and Nikos Stylianopoulos \cite{Gustafsson-Putinar-Saff-Stylianopoulos-2009}. While natural questions of asymptotic analysis and constructive approximation theory were addressed there, in part as a Bergman space counterpart of a foundational contribution on Hardy space setting due to Widom \cite{Widom-1969}, we focus below on a different, related problem. Namely, to characterize in terms of power moments the non-overlapping of a given system of islands, that is compact subsets of the complex plane each endowed with the uniform area measure. Our quest is partially motivated by the prodigious activity recorded during the last decade on packing disks, ellipses, ovals, or even more general shapes \cite{Bennell, Kampas-2019, Kampas-2020}. We do not touch below the natural sequel question: once 
the islands are non-overlapping, is it possible to optimize their packing? On these topics see for instance \cite{Fasano}.

In the first part of the present article we adopt the ``hyponormal quantization" point of view. To be more specific, we treat the density function against the area measure of a compact set as the principal function of a hyponormal operator with rank one self-commutator (details given in the Preliminaries below). In other terms we treat the original non overlapping question as an inverse spectral problem. The Hilbert space counterpart of the dictionary provides a matrix positivity certificate for non-overlapping. One step further, we specialize the analysis to collections of 
quadrature domains (definition given in the Preliminaries) with the benefit of obtaining a rational, hence finitely determined, kernel which encodes in its 
positive definiteness the non overlapping criterion.

A second part of the article deals with the simplest, yet highly relevant, example of two disks. The iterative process of separating them via $2 \times 2$ matrix computations 
reveals the low computational complexity of our proposed algorithm. Moreover, we discuss in depth a gravi-equivalent deformation of two overlapping disks into 
an algebraic domain carrying a uniform area mass distribution. Recently developed methods of function theory on Riemann surfaces and potential theory naturally enter 
into discussion.

The contents is the following. Section 2 is devoted to preliminaries: the basics of spectral theory of hyponormal operators and a review of the concept of quadrature domain, presented from a geometric perspective. These two topics of the preliminaries merge into a block-matrix model encoding the {\it hyponormal quantization} of a quadrature domain. The main object of study in the present article is a four argument kernel described in detail in Section 3. A general framework for deriving non-overlapping certificates is presented in Section 4.  Section 5 specializes the results to the case of collections of quadrature domains. On that ground, an operator resolvent realization of the {\it rational} four argument kernel is included. Section 6 treats in detail, from at least four different points of view, the situation of two disks. First, the block-matrix model attached to a quadrature domain produces a matrix algebra algorithm with clear computational accessibility advantages. Then a dynamic, potential theoretic approach is developed, revealing this time the crucial role played by the Schwarz function in the moving boundaries analysis. A second dynamical interpretation, invoking this time an evolution by ``smashing" complements the previous computations. The article finishes by a change of metric transform (from Euclidean to spherical) underlying the importance of a larger group of motions for our non-overlapping criterion quest.
\bigskip

{\bf Acknowledgement.} The second author was partially supported by a Simons Collaboration Grant for mathematicians.

\section{Preliminaries}

\subsection{The exponential transform and its Hilbert space factorization}

Let $\zeta = x + iy$ denote the complex variable, with associated area measure
$$ dA(\zeta) = dx \wedge dy = \frac{ d \overline{\zeta} \wedge d\zeta}{2i}.$$
Let $g : \C \longrightarrow [0,1]$ be a (Lebesgue) measurable function of compact support. The associated {\it exponential transform} is:
\begin{equation}
E_g(w,z) = \exp ( \frac{-1}{\pi} \int_\C \frac{ g(\zeta) dA(\zeta)}{(\zeta-w)(\overline{\zeta} - \overline{z})}).
\end{equation}
Originally this function is defined for $z,w$ disjoint of the support of $g$. A separately continuous extension is possible, by adopting on the diagonal $z=w$ the convention $e^{-\infty} = 0.$
See for details  \cite{Gustafsson-Putinar-LNM}.

Taylor's expansion at infinity of $E_g$ contains, via an invertible generating series transform, the moment data:
$$ a_{kn}(g) = \int_\C g(\zeta) \zeta^k \overline{\zeta}^n dA(\zeta), \ \ k, n \geq 0.$$

One of the essential properties of the exponential transform
is the positive definiteness of the kernel
$$ (w,z) \mapsto 1-E_g(w,z), \ \ w,z \notin{\rm supp} (g).$$
In particular, for any collection of points $\lambda_j \notin {\rm supp} (g)$ and weights $c_j \in \C, 1 \leq j \leq n,$ one finds
$$ \sum_{j,k=1}^n E_g(\lambda_j, \lambda_k) c_j \overline{c_k} \leq 0,$$
whenever $\sum_{j=1}^n c_j = 0.$ In other terms
$E_g(w,z)$ is a {\it conditionally negative definite kernel}, a concept isolated by Schoenberg \cite{Schoenberg} from his studies of distance geometry. See also
\cite{Guella-Menegatto}.

The Hilbert space factorization of the kernel $1-E_g(w,z)$ goes back to the pioneering work of Joel Pincus \cite{Pincus-1968}, see also the Appendix in \cite{Gustafsson-Putinar-LNM} for details.
In a nutshell, there exists a constructive bijective correspondence between measures $g dA$, with  the weight $g : \C \longrightarrow [0,1]$ measurable function of compact support and irreducible
linear, bounded operators $T$ acting on a complex, separable Hilbert space $H$, constrained by the rank-one commutator condition:
$$ [T^\ast, T] = T^\ast T - T T^\ast = \xi \otimes \xi := \xi \langle \cdot, \xi \rangle, \ \ \xi \neq 0.$$ More specifically,
\begin{equation}\label{factor-E}
 E_g(w,z) = 1 - \langle (T^\ast - \overline{z})^{-1} \xi, (T^\ast - \overline{w})^{-1} \xi \rangle, \ \ w,z \notin {\rm supp} (g).
 \end{equation}
Note that the main objects of interest $E_g$ and $T$ depend only on the class of $g$ in $L^1(\C, dA)$. Operators satisfying the commutator identity $[T^\ast, T] \geq 0$ are called {\it hyponormal},
a class isolated by Halmos, reflecting a relaxation of the subnormality property.
In the dictionary above, the closed support of the function $g$ equals the spectrum of $T$. Moreover, if $g$ is the characteristic function of a bounded open set $U$, then
the operator $T-\lambda$ has Fredholm index equal to $-1$ for all points $\lambda \in U$. Following Pincus, the function $g$ associated to the hyponormal 
operator $T$ is called its {\it principal function}.

A rational approximation result, relevant for our study, is referring to principal functions which have only extremal values, $0$ and $1$. More precisely, 
let $T$ be an irreducible hyponormal operator with rank-one self-commutator $[T^\ast, T] = \xi \otimes \xi$ and principal function equal to the characteristic function of the spectrum
$\sigma(T)$.
Then the vector $\xi$ is $T$-rational cyclic, that is the subspace generated by $r(T)\xi$, with $r$ a rational function with poles outside $\sigma(T)$, is dense in $H$, cf.
Theorem 1 in \cite{Putinar-1988b}.
In case $g$ is the characteristic function of a domain with smooth, real analytic boundary, this density statement can be obtained via an analytic functional model of Hardy space type, proposed by Pincus, Xia and Xia. See again  \cite{Gustafsson-Putinar-LNM} for details.

The operator $T^\ast$ can be constructed from the exponential kernel as the multiplication by the variable
$\overline{z}$ on a Sobolev type functional space involving one partial derivative, \cite{Gustafsson-Putinar-LNM}.
In the present work we focus on an alternate, finite difference scheme, leading to an infinite matrix representation of the operator $T$ from the given data $E_g$.
To this aim, the following kernel, first appearing in  \cite{Putinar-1998} will be a main character in our story:
\begin{equation}\label{L-kernel}
 L_g(w,z; u,v) = \frac{ E_g(v,z)E_g(w,u) - E_g(w,z) E_g(v,u)}{(v-w) (\overline{u}-\overline{z}) E_g(w,u)}, \ \ w,z,u,v \notin  {\rm supp}(g).
 \end{equation}
 A separate section below is devoted to the properties of this four argument kernel.

\subsection{Quadrature domains}

A bounded domain $\Omega \subset \C$ is called a {\it quadrature domain} for analytic functions if there exists a distribution
$u \in {\mathcal D}'(\Omega)$ of finite support, satisfying
$$ \int_\Omega f dA = u(f),$$
for all complex analytic, integrable functions on $\Omega$. In other terms, a finite point quadrature formula is valid for the area measure carried by $\Omega$, {\it for all} analytic functions.
For example, Gauss mean value theorem applied to a disk $\D(a,r)$ provides such an universal formula:
$$ \int_{\D(a,r)} f dA = \pi r^2 f(a), \ \ f \in {\mathcal O}(\D(a,r)) \cap L^1(\D(a,r), dA).$$
Rational conformal transforms of the unit disk exhaust all simply connected quadrature domains.
Quadrature domains were introduced half a century ago by Aharonov and Shapiro \cite{Aharonov-Shapiro-1976}. The concept turned out to be central for studies of conformal mapping (the original motivation), fluid mechanics, potential theory, integrable systems and operator theory. See the survey \cite{Gustafsson-Shapiro-2005} and the entire volume containing it for a glimpse on the fascinating evolution and ramifications of quadrature domain theory.

The boundary of a quadrature domain $\Omega$ is a real algebraic curve. More precisely, there exists an irreducible hermitian polynomial $R(z, \overline{z}) = \overline{ R(z, \overline{z})}$ with the property:
$$ \Omega = \{ z \in \C; R(z, \overline{z}) < 0 \},$$
modulo some finite set. Denoting by $P(z)$ the minimal monic polynomial of degree $d$ vanishing at the quadrature nodes, we find
$$R(z,\overline{z}) = |P(z)|^2 - |P_{d-1}(z)|^2 - |P_{d-2}(z)|^2 - \ldots - |P_1(z)|^2 - |P_0(z)|^2,$$
where $P_j \in \C[z]$ are polynomials constrained by the degree value $\deg P_j = j, \ 0 \leq j < d.$ Details can be found in
\cite{Gustafsson-Putinar-LNM}. The possible singular points of the boundary of $\Omega$, contained in the zero locus of the 
polynomial $R(z,\overline{z})$ are special, and well charted. Also, possible relevant for a continuation of the present article, is the fact that quadrature domains are dense with respect to Hausdorff metric among all planar domains.

For terminology reasons, paying attention to kernel functions, we denote henceforth the polarization of the hermitian polynomial above as:
$$ Q(z,w) = R(z, \overline{w}), \ \ z,w \in \C.$$
To the effect that $Q$ is a polynomial in real variables, analytic with respect to $z$ and anti-analytic with respect to $w$.

A characteristic feature of quadrature domains is the possibility of extending to a meromorphic function in $\Omega$
the Schwarz function of the boundary,
that is an analytic function $S(z)$ defined in a neighborhood of $\partial \Omega,$ satisfying $S(\zeta) = \overline{\zeta},
 \zeta \in \partial \Omega$. If $R(z,\overline{z}) = 0$ defines the boundary of the quadrature domain, then $R(z, S(z)) = 0$ for $z$ belonging to a neighborhood of $\partial \Omega$. In particular $S(z)$ is in this case an algebraic function.
 For the disk $\D(a,r)$ one finds $S(z) = \overline{a} + \frac{r^2}{z-a}$. Quite remarkable, the role of Schwarz function in mathematical analysis and geometry continues to expand \cite{Davis-1974, Shapiro-1992}.

By abuse of terminology, we drop the connectedness assumption in the definition of a quadrature domain and allow disjoint unions of open sets subject to the same finite point universal quadrature identity for analytic functions. 

The exponential transform and the hyponormal quantization of a ``shade function"
detect quadrature domains, as follows. The measurable function $g: \C \longrightarrow [0,1]$ of compact support coincides with the characteristic function of a quadrature domain $\Omega$ if and only if its exponential transform $E_g(w,z)$ is rational in a neighborhood of infinity, with a polarized denominator of the form $P(w) \overline{P(z)},$ where $P(z)$ is a polynomial of degree $d > 0$. In this case 
$$ P(w) \overline{P(z)} E_g(w,z) = Q(w,z), \ \ |w|, |z| >>1,$$
the numerator $Q(w,z)$ gives the defining equation of the set $\Omega$. The zeros of $P$ identify, multiplicity counted, the quadrature nodes. On the other hand, the irreducible hyponormal operator $T$ with principal function $g$ and self-commutator
$[T^\ast, T] = \xi \otimes \xi$  satisfies the finiteness condition\\
 ${\rm dim}\  {\rm span} \{ T^{\ast k}\xi, \ k \geq 0\} < \infty$ if and only if 
$g$ is the characteristic function of a quadrature domain. Details can be found in \cite{Putinar-1990, Gustafsson-Putinar-LNM}.

When regarded from moment data, quadrature domains are finitely determined, in a precise sense implemented by the exponential transform and the matrix model offered by hyponormal quantization. The following formula encodes in a single line the above observations:
\begin{equation}
\frac{Q(z,w)}{P(z) \overline{P(w)}} =  \exp ( \frac{-1}{\pi} \int_{ Q(\zeta, \zeta)< 0}  \frac{ dA(\zeta)}{(\zeta-z)(\overline{\zeta} - \overline{w})}),\ \ |z|, |w| >>1,
\end{equation}
where $\{ z \in \C; Q(z,z)<0\}$ is a quadrature domain with nodes at the zeros of the minimal, monic polynomial $P(z)$.

\subsection{The two diagonal block-matrix model}

Let $\Omega \subset \C$ be a quadrature domain. We denote by $T$ its hyponormal quantization, that is an irreducible Hilbert space operator $T \in {\mathcal L}(H)$ with rank-one self commutator
$[T^\ast, T] = \xi \langle \cdot, \xi\rangle$ and principal function equal to the characteristic function of $\Omega$. 
In this case the $T^\ast$- cyclic subspace $H_0 = {\rm span} \{ T^{\ast k} \xi, \ k\geq 0\}$ is finite dimensional. The minimal polynomial of the restriction
$D_0 = T^\ast|_{H_0}$ coincides with the monic polynomial $P(z)$ of degree $d$ vanishing at the quadrature nodes \cite{Gustafsson-Putinar-LNM}. Due to the invariance of the principal function $g_T = \chi_{\Omega}$ under finite rank-perturbations, it follows that the subspaces
$$ H_k = {\rm span} \{ T^j x, \ 0 \leq j \leq k, \ x \in H_0 \}$$ have exact dimension
$$ \dim H_k = (k+1) d, \ \ k \geq 0.$$
The filtration $H_k \subset H_{k+1}$ reveals a block-matrix weighted shift structure of the operator $T$:
first the entire space $H$ can be decomposed into an orthogonal direct sum:
$$ H = H_0 \oplus [H_1 \ominus H_0] \oplus [H_2 \ominus H_1] \oplus \ldots,$$
and correspondingly one can write:
\begin{equation}\label{weighted-shift}
T = \left( \begin{array}{ccccc}
             D_0 & 0 & 0 & \ldots & \ldots\\
             A_0 & D_1 & 0 & & \ldots\\
             0& A_1 & D_2 & 0 & \ldots\\
             0 & 0 & A_2 & D_3 & \ldots\\
             \vdots & & \ddots & \ddots & \\
             \end{array} \right).
\end{equation}
A convenient choice of orthogonal bases in each summand $H_{k+1} \ominus H_k = \C^d$ allows us to assume $A_k > 0$ for all $k \geq 0.$
The commutation relation $[T^\ast, T] = \xi \langle \cdot, \xi \rangle$ is equivalent to the recurrent system of equations
\begin{equation}\label{recurrence}
[D_k^\ast , D_k] + A_{k}^2 = A_{k-1}^2, \ \ k \geq 0; \ \ A_{-1} = \xi \langle \cdot, \xi \rangle,$$ $$
A_k D_{k+1} = D_{k} A_k, \ \ k \geq 0.
\end{equation} 
Note that ${\rm trace}  \ A^2_k = {\rm trace} \ A^2_{k-1}, \ \ k \geq 0,$ hence
$$ {\rm trace} \ A^2_k  = \|\xi \|^2 = \frac{ {\rm Area} (\Omega)}{\pi},$$
that is the off diagonal entries in (\ref{weighted-shift}) are uniformly bounded in norm. Consequently, the boundedness of the entire operator $T$ is equivalent to 
$ \sup_k \| D_k \| < \infty.$ Moreover, all diagonal entries $D_k$ are similar to each other, hence iso-spectral, and bounded in norm by that of $T$:
$$ \|D_k \| \leq \|T \|, \ \ k \geq 0.$$
Details can be found in \cite{Gustafsson-Putinar-LNM} and the references cited there.
We adopt the following notation, related to the orthogonal decomposition of the Hilbert space $H$:
$$ D = {\rm diag} (D_0, D_1, D_2, \ldots), \ \ A = {\rm diag} (A_0, A_1, A_2, \ldots),$$
while $S$ stands for the shift operator:
$$ S (x_0, x_1, x_2, \ldots) = (0, x_0, x_1, \ldots), \ \ x_k \in H_k \ominus H_{k-1}, \ \ k \geq 0; \ \ H_{-1} = 0.$$
The staire-case matrix decomposition $(\ref{weighted-shift})$ becomes 
$$ T = D + S ,$$
with resolvent:
$$ (T-w)^{-1} = (D-w + SA)^{-1} = (I + (D-w)^{-1} SA)^{-1} (D-w)^{-1}, \ \ |w| > \| T \|.$$
When applied to an element $x \in H_0$ of the initial space, each term of the convergent Neumann series:
$$  (T-w)^{-1}x = \sum_{k=0}^\infty (-1)^n [(D-w)^{-1} SA]^n (D-w)^{-1} x = $$
$$ \sum_{k=0}^\infty (-1)^n (D_n -w)^{-1} A_{n-1} (D_{n-1} -w)^{-1} A_{n-2} \ldots (D_0-w)^{-1}x,$$
belongs to a different orthogonal component of $H$. The result is a sums of squares decomposition which gives additional structure to the positivity of the kernel $L$.
Indeed, for $|z|, |w| > \| T \|$, and after identifying the summand $H_n \ominus H_{n-1}$ with $\C^d$ one defines
$$ f_n(w,z) = (D_n -w)^{-1} A_{n-1} (D_{n-1} -w)^{-1} A_{n-2} \ldots (D_0-w)^{-1} (D_0^\ast - \overline{z})^{-1} \xi.$$
Then
\begin{equation}\label{direct-summand}
 f_{n+1}(w,z) = (D_{n+1}-w)^{-1} A_n f_n(w,z), \ n\geq 0,$$
 $$ f_0(w,z) = (D_0-w)^{-1} (D_0^\ast - \overline{z})^{-1} \xi.
 \end{equation}
 
Note that the parent functions $F_n =  [(D-w)^{-1} SA]^n (D-w)^{-1} (D^\ast - \overline{z})^{-1} \xi \in H_n \ominus H_{n-1}, \ n \geq 1,$ are mutually orthogonal and the action of the block matrix yields a three term relation:
$$ T F_n(w,z) = A_{n-1} F_{n-1}(w,z) + w F_n(w,z) + A_n F_n(w,z), \ n \geq 1.$$
 In conclusion
 \begin{equation}\label{double-resolvent}
 (T-w)^{-1}(T^\ast - \overline{z})^{-1} \xi = \sum_{k=0}^\infty (-1)^k F_k(w,z), \ \ |w|, |z| > \|T \|,
  \end{equation}
 with an absolutely convergent series.

\section{The four argument kernel}

Let $T$ be an irreducible hyponormal operator with rank-one self-commutator $[T^\ast, T] = \xi \otimes \xi$, with principal function $g$. The four argument kernel (\ref{L-kernel}) can be factored through
the underlying Hilbert as follows:
\begin{equation}\label{factor-L}
 L(w,z; u,v) = \langle (T-v)^{-1} (T^\ast - \overline{u})^{-1} (T-w)^{-1} (T^\ast - \overline{z})^{-1} \xi, \xi \rangle, \ \ w,z,u,v \notin {\rm supp} (g).
\end{equation}
Indeed, it suffices to repeatedly apply to (\ref{factor-E}) the resolvent equation
$$ (S-\lambda)^{-1} - (S-\mu)^{-1} = (\lambda-\mu) (S-\lambda)^{-1} (S-\mu)^{-1}.$$

Thus, $L$ is a positive definite kernel defined on the resolvent polydomain $V \times V$, where $ V = (\C \setminus \sigma(T)) \times (\C \setminus \sigma(T))$.
The main observation of this section is that the germ at infinity of $L$, or at any other point of $V$ not only determines constructively $T$, but it reveals the characteristic property of the conditionally positive definite kernel $E$, to be associated with a measurable function $g$ with values in the interval $[0,1]$.
The inner structure of the kernel $L$ reflects the commutator identity $[T^\ast,T] = \xi \otimes \xi$. 

In general, the {\it positive definiteness} of a kernel $M(w,z; u,v)$ defined on a general polydomain  $U^4$ means that for every finite collection
of points $w_k, z_k \in U$ and weights $\lambda_k \in \C, \ 1\leq k \leq m,$ one has:
$$\sum_{k,\ell = 1}^m M(w_k, z_k; w_\ell, z_\ell) \lambda_k \overline{\lambda_\ell} \geq 0.$$
Then we write in short $M \succeq 0$. To be accurate, the condition above is known as {\it positive demi-definiteness}, allowing equality to zero for some non-trivial selections of points and weights.
Unless essential in the proofs, throughout this article we simply call positive or positive definite a positive semi-definite kernel.
For $R >0$ we set $\Delta_R = \{ \zeta; |\zeta| > R\}.$

\begin{theorem}\label{free-infinity} Let $g: \C \longrightarrow \R$ be a measurable function with compact support and let $E = E_g$ be the associated exponential transform.
The following conditions are equivalent:
\begin{enumerate}
 \item{ The function $g$ takes values in the interval $[0,1]$, almost everywhere with respect to area measure;}
 \item{ There exists a constant $C>0$ such that the following positivity conditions are fulfilled:
 \begin{equation}\label{bounded}
L(w,z; u,v) \succeq 
 C [w \overline{u} L(w,z; u,v) - w \overline{u} L(w,z; u,v)|_{w=\infty} - $$ $$ -w \overline{u} L(w,z; u,v)|_{u=\infty}  + w \overline{u} L(w,z; u,v)|_{u,w = \infty}] \succeq 0.
\end{equation}
}
 \item{ The kernel $L$ is positive definite in a polydisk $\Delta_R^4$ centered at infinity and there exists a point $\lambda \in \Delta_R$
 together with a constant $C>0$, such that
 \begin{equation}\label{point-evaluation}
 C \frac{ L(\lambda, z; \lambda, v) - L(w,z; \lambda, v) - L(\lambda, z; u, v) + L(w,z; u,v)}{(\lambda-w) (\overline{\lambda}-\overline{u})} \succeq L(w,z; u,v);
 \end{equation}}
 \item{There exists an open set $U$ disjoint of the support of $g$ such that the kernel $L$ is positive definite on $U^4$ and condition (\ref{point-evaluation})
 holds for a point $\lambda \in U$ and a constant $C>0$.}
 \end{enumerate}
 \end{theorem}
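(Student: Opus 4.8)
\medskip
\noindent\textbf{Proof plan.} The plan is to route each of the four conditions through the Hilbert space realization of Section 2, where it becomes a transparent statement about an auxiliary hyponormal operator and its principal function. Throughout, set $\psi(w,z)=(T-w)^{-1}(T^{\ast}-\overline z)^{-1}\xi$, so that the factorization (\ref{factor-L}) reads $L(w,z;u,v)=\langle\psi(w,z),\psi(u,v)\rangle$; in particular $L$ is a Gram kernel. The only algebra we shall need consists of the two consequences of $(S-\lambda)^{-1}-(S-\mu)^{-1}=(\lambda-\mu)(S-\lambda)^{-1}(S-\mu)^{-1}$, namely
$$ w\,\psi(w,z)+(T^{\ast}-\overline z)^{-1}\xi = T\,\psi(w,z), \qquad \frac{\psi(\lambda,z)-\psi(w,z)}{\lambda-w}=(T-\lambda)^{-1}\psi(w,z).$$

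\noindent\emph{From} (1) \emph{to the rest.} If $g$ has values in $[0,1]$, the Pincus correspondence recalled in Section 2 produces an irreducible \emph{bounded} hyponormal $T$ with $[T^{\ast},T]=\xi\otimes\xi$, principal function $g$, and $\sigma(T)=\supp(g)$, realizing $E_g$ through (\ref{factor-E}) and hence $L$ through (\ref{factor-L}). The first resolvent identity above (with its limiting case $w\to\infty$) shows that the bracketed expression in (\ref{bounded}) equals $\langle T\psi(w,z),T\psi(u,v)\rangle$, which is $\succeq 0$; and the left inequality in (\ref{bounded}) is the Gram form of the operator inequality $I\succeq C\,T^{\ast}T$, which holds with $C=\|T\|^{-2}>0$. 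Applying the second resolvent identity twice shows that the double difference quotient in (\ref{point-evaluation}) equals $\langle\,[(T-\lambda)(T-\lambda)^{\ast}]^{-1}\psi(w,z),\psi(u,v)\rangle$; since $(T-\lambda)(T-\lambda)^{\ast}\preceq\|T-\lambda\|^{2}I$, inequality (\ref{point-evaluation}) follows with $C=\|T-\lambda\|^{2}$ for every $\lambda\notin\sigma(T)$. This gives (2); choosing for $U$ a neighborhood of infinity disjoint from the compact set $\supp(g)$ gives (4), and specializing $U$ to such a $\Delta_R$ gives (3).

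\noindent\emph{From} (2), (3) \emph{or} (4) \emph{to} (1). Each of these conditions delivers positive definiteness of $L$ near infinity: in (2) because $L$ is squeezed between $0$ and itself; in (3) by hypothesis; in (4) after propagating positive definiteness from $U^{4}$ to a neighborhood of infinity, using that $L$ is separately analytic/anti-analytic in its four arguments together with unique continuation. From this germ the Pincus construction furnishes a Hilbert space $H$, a vector $\xi$ cyclic for the pair $(T,T^{\ast})$ --- whence $T$ is irreducible --- and an operator $T$, a priori only densely defined, with $[T^{\ast},T]=\xi\otimes\xi$ and moment data matching those of $g$. The computations of the previous step are reversible: the surviving positivity --- the left inequality in (\ref{bounded}), respectively (\ref{point-evaluation}) at one admissible $\lambda$ --- forces $\|T\|^{2}\le 1/C$, respectively $(T-\lambda)(T-\lambda)^{\ast}\preceq C I$, so $T$ is bounded. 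A bounded, irreducible, hyponormal operator with rank-one self-commutator has, by the Pincus correspondence read in the reverse direction, principal function taking values in $[0,1]$; since this principal function is $g$, we recover (1).

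\noindent\emph{Expected obstacle.} The algebra above is routine; the real difficulty is the reconstruction step --- manufacturing, out of the germ at a single point of one positive definite kernel, a \emph{bounded}, irreducible hyponormal operator whose principal function is the prescribed $g$, while making sure that the operator produced by the Kolmogorov/Pincus construction acts exactly on $\overline{{\rm span}}\{\psi(w,z)\}$, so that ``$I\succeq C\,T^{\ast}T$ tested on the vectors $\psi(w,z)$'' genuinely amounts to ``$\|T\|^{2}\le 1/C$'', with no invisible reducing summand spoiling the conclusion. This is precisely where the structural results for the exponential transform recalled in Section 2 must be invoked: the Pincus factorization and its inverse, the $T$-rational cyclicity of $\xi$, and the identification of the principal function. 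The remaining points --- chiefly the equivalence of the four ``positivity on an open set'' formulations --- are bookkeeping.
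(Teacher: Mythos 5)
Your forward direction, $(1)\Rightarrow(2),(3),(4)$, is correct, complete, and actually cleaner than the paper's presentation: you verify explicitly that the bracketed expression in~(\ref{bounded}) equals $\langle T\psi(w,z),T\psi(u,v)\rangle$ and that the double difference quotient in~(\ref{point-evaluation}) equals $\langle (T-\lambda)^{-*}(T-\lambda)^{-1}\psi(w,z),\psi(u,v)\rangle$, and then read off the operator inequalities $I\succeq C\,T^{\ast}T$ and $(T-\lambda)(T-\lambda)^{\ast}\preceq CI$. The paper simply cites Theorem 4.1 of \cite{Putinar-1998} for $(1)\Leftrightarrow(2)$; you have unpacked it, and that is genuinely useful. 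Both for the paper and for you, however, the reverse direction rests on a reconstruction lemma that is invoked rather than proved: that the intrinsic algebraic structure of $L$ (not merely its positivity) forces the reconstructed operator to satisfy $[T^{\ast},T]=\xi\otimes\xi$ and to have principal function $g$. The paper acknowledges this by referring to \cite{Putinar-1998} ``for full details''; you acknowledge it in your ``expected obstacle'' paragraph. So on that point you are at the same level of rigor as the source.

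Where you and the paper genuinely diverge is the treatment of $(4)\Rightarrow(1)$. You propose to propagate positive definiteness of $L$ from $U^{4}$ to a neighborhood of infinity ``by unique continuation'' and then proceed as in case $(3)$. Positive semi-definiteness of a sesqui-analytic kernel does not propagate by analytic continuation of $L$ itself; one must extend the Kolmogorov factorizing map $\rho$ as a Hilbert-space-valued (anti)analytic function, which requires a separate argument about the domain of its continuation. The paper avoids the issue entirely: it performs the Kolmogorov factorization on $U^{4}$, defines $X$ from~(\ref{point-evaluation}) there, and uses the elementary algebra $(X+\lambda-w_{1})\rho(w_{1},z)=(X+\lambda-w_{2})\rho(w_{2},z)$ to obtain $\rho(w,z)=(X+\lambda-w)^{-1}\sigma(z)$ directly on $U$, then identifies $T=X+\lambda$. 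You should either replace your unique-continuation step with this resolvent-algebra identification, or supply a proof that the positive-definiteness germ indeed propagates. As written, case $(4)$ is the one genuine gap in your argument; the rest is a correct and somewhat more explicit rendering of the paper's route.
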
 

\begin{proof} The equivalence between 1) and 2) is established in Theorem 4.1 in \cite{Putinar-1998}. In turn these statements are equivalent to the existence of an irreducible 
hyponormal operator
$T$ with rank-one self-commutator and principal function equal to $g$. The two positivity conditions in 2) quantify the boundedness of $T$ acting inside the
Gram matrix 
$\langle T^k T^{\ast \ell} \xi, T^m T^{\ast n} \xi \rangle, \ \ k, \ell, m, n \geq 0,$ plus the commutator identity $[T^\ast, T] = \xi \otimes \xi$. Conditions 3) and 4) restate the same two properties 
on the Taylor expansion at different points than infinity, of the correlation kernel of resolvents $\langle (T-w)^{-1} (T^\ast - \overline{z})^{-1} \xi, (T-u)^{-1} (T^\ast - \overline{v})^{-1} \xi \rangle$.

Assume assertion 3) holds and consider the 
Kolmogorov factorization of the positive definite kernel $L$. Specifically, there exists a complex Hilbert space and a map $\rho: \Delta_R \longrightarrow H$
satisfying
$$ \langle \rho(w,z), \rho(u,v) \rangle = L(w,z; u,v), \ w,z,u,v \in \Delta_R.$$
Consequently, $\rho$ is an analytic function in $w$ and anti-analytic in $z$, vanishing with respect to each variable at infinity. Without loss of generality we can assume that the Hilbert space $H$ is generated by the vectors $\rho(w,z), \ w,z \in \Delta_R$.
Condition (\ref{bounded}) is equivalent to a majorization of Gram kernels:
$$  \langle \rho(w,z), \rho(u,v) \rangle \succeq C \langle w \rho(w,z) - w \rho(w,z)|_{w=\infty}, u \rho(u,v) - u \rho(u,v)|_{u=\infty} \rangle.$$
That is, there exists a linear bounded operator $T : H \longrightarrow H$ with the property
$$ T  \rho(w,z) = w \rho(w,z) - w \rho(w,z)|_{w=\infty}, \ \ w,z \in \Delta_R.$$
It turns out then that the inner structure of the kernel $L$, inherited from the exponential transform $E_g,$ implies $[T^\ast, T] = \xi \otimes \xi$ and
$$ \rho(w,z) = (T-w)^{-1} (T^\ast - {\overline z})^{-1} \xi.$$
See \cite{Putinar-1998} for full details.

The hypothesis (\ref{point-evaluation}) is equivalent to:
$$ C \langle \frac{ \rho(\lambda,z) - \rho(w,z)}{\lambda - w}, \frac{ \rho(\lambda,v) - \rho(u,v)}{\lambda - u} \rangle \succeq L(w,z; u,v),$$
on the set $\Delta_R^4$. As before, there exists a linear bounded operator $X$ mapping one frame of vectors to the other:
$$ X  \frac{ \rho(\lambda,z) - \rho(w,z)}{\lambda - w} = \rho(w,z).$$
Therefore
\begin{equation}\label{resolvent}
 X  (\rho(\lambda,z) - \rho(w,z)) = (\lambda-w) \rho(w,z).
 \end{equation}
Evaluating at $w = \infty$ we find
$$ X \rho(\lambda,z) = - w \rho(w,z)|_{w=\infty}.$$
Consequently,
$$ (X+ \lambda) = w \rho(w,z)  - w \rho(w,z)|_{w=\infty}.$$
This proves $T = X + \lambda$ and statement 2) is verified.

Vice-versa, assume assertions 1) and 2) are true.  Then the linear bounded operator $T$ satisfying $[T^\ast, T] = \xi \otimes \xi$ produces the linear transform
$X = T-\lambda$. Then $X$ fulfills the above identities with $\rho(w,z) = (T-w)^{-1}(T^\ast - \overline{w})^{-1}\xi.$ The boundedness of $X$ and the commutator identity
$[X^\ast, X] = \xi \otimes \xi$ are encoded in condition 3).

Similarly, under assumption 4) we start with the above factorization:
$$ \langle \rho(w,z), \rho(u,v) \rangle = L(w,z; u,v), \ w,z,u,v \in U,$$
and repeat, defining the linear bounded map $X: H \longrightarrow H$ via
the same identity (\ref{resolvent}). This implies
$$ (X + \lambda) [\rho(w_1,z) - \rho(w_2,z)] =  [w_1\rho(w_1,z) - w_2\rho(w_2,z)], \ \ w_1,w_2, z \in U.$$
That is
$$ (X+\lambda-w_1) \rho(w_1,z) = (X+\lambda-w_2) \rho(w_2,z), \ \ w_1,w_2, z \in U.$$
Well known elementary algebra streaming from the resolvent equation implies that $\rho(w,z) = (X+ \lambda-w)^{-1} \sigma(z),$
where $\sigma: U \longrightarrow H$ is an antianalytic map. The structure of the kernel $L$ enters into the picture showing
that $T + X+ \lambda$ is the pure hyponormal operator with rank-one self-commutator factoring the function $\rho$ as before. See again \cite{Putinar-1998} for 
more details.

\end{proof} 

We recognize in the second term of the positivity condition above a double finite difference at the point at infinity.
By its construction, and the basic properties of the exponential transform we note that  $L(w,z; u,v)$ vanishes of the first order at infinity, in every variable separately.
Specifically,
$$ L(w,z; u,v) = \frac{\gamma} {w\overline{z} v \overline{u}} + {\rm higher \ order \ terms},$$
with leading term coefficient $\gamma = \frac{{\rm Area}}{\pi}$.

To simplify notation, the standard finite difference notation: $f [a,b] = \frac{f(a) - f(b)}{a-b}$ is adopted from now on. In case the function depends on two variables we write
$ f([a,b], c) = \frac{ f(a,c) - f(b,c)}{a-b}$, see for instance \cite{Gelfond}. Then condition (\ref{point-evaluation}) simplifies to:
$$ C L([\lambda,w],z; [\lambda,u], v) \succeq L(w,z; u,v).$$

Further on, identifying finite difference expressions:
$$ L(w,z; u,v) = \frac{ E(v,z) \frac{E(w,u)-E(w,z)}{\overline{u}-\overline{z}} - E(w,z) \frac{E(v,u) - E(v,z)}{\overline{u}-\overline{z}} }{(v-w)E(w,u)},$$
we derive a determinantal formula for the kernel:
$$ (v-w)(\overline{u}-\overline{z}) E(w,u)  L(w,z; u,v) = \left| \begin{array}{cc}
                E(v,z) & E(w,z)\\
                 E(v,u) & E(w,u)\\
                 \end{array} \right|.$$
Operations on rows and columns yield the following result.

\begin{proposition}\label{FD}
The $L$ kernel associated to the exponential transform $E$ can be written in terms of divided differences as:
\begin{equation}\label{finite-diff}
 L(w,z; u,v) = \frac{-1}{E(w,u)} \left| \begin{array}{cc}
                E([v,w],[z,u]) & E(w,[z,u])\\
                 E([v,w],u) & E(w,u)\\
                 \end{array} \right|, \ z,w,u,v \in \Delta_R.
\end{equation}
\end{proposition}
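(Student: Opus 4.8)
The plan is to start from the determinantal identity recorded just above the statement, namely
\[
(v-w)(\overline{u}-\overline{z})\,E(w,u)\,L(w,z;u,v) \;=\; \begin{vmatrix} E(v,z) & E(w,z)\\ E(v,u) & E(w,u)\end{vmatrix},
\]
and to rewrite the $2\times 2$ determinant on the right by elementary column and row operations, recognizing at each step the differences that appear as divided differences of $E$.

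First I would replace the first column by its difference with the second column, which leaves the determinant unchanged and produces the entries $E(v,z)-E(w,z)$ and $E(v,u)-E(w,u)$. Since $E$ is analytic in its first slot, each of these equals $(v-w)$ times the divided difference $E([v,w],\cdot)$, so a common factor $(v-w)$ can be pulled out of the first column:
\[
\begin{vmatrix} E(v,z) & E(w,z)\\ E(v,u) & E(w,u)\end{vmatrix} \;=\; (v-w)\begin{vmatrix} E([v,w],z) & E(w,z)\\ E([v,w],u) & E(w,u)\end{vmatrix}.
\]
Next I would subtract the second row from the first; again the determinant is unaffected, and the new first row carries $E([v,w],z)-E([v,w],u)$ and $E(w,z)-E(w,u)$. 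The point to note is that $E([v,w],\cdot)$, being a linear combination of $E(v,\cdot)$ and $E(w,\cdot)$, is still anti-analytic in the remaining variable, so each of these differences equals $(\overline{z}-\overline{u})$ times a second-slot divided difference; pulling out the common factor $\overline{z}-\overline{u}=-(\overline{u}-\overline{z})$ gives
\[
\begin{vmatrix} E(v,z) & E(w,z)\\ E(v,u) & E(w,u)\end{vmatrix} \;=\; -(v-w)(\overline{u}-\overline{z})\begin{vmatrix} E([v,w],[z,u]) & E(w,[z,u])\\ E([v,w],u) & E(w,u)\end{vmatrix}.
\]
Substituting this into the determinantal identity and cancelling the common scalar factor $(v-w)(\overline{u}-\overline{z})$ yields the asserted formula.

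I do not expect a genuine obstacle here; the only points requiring a little care are the bookkeeping of the two sign conventions (the first-slot divided difference comes with $v-w$, the second-slot one with $\overline{z}-\overline{u}=-(\overline{u}-\overline{z})$), and the observation that every divided-difference expression occurring is holomorphic, respectively anti-holomorphic, in each argument throughout $\Delta_R$. Consequently the identity, proved a priori for pairwise distinct arguments where the denominators do not vanish, extends by continuity to all of $\Delta_R^4$, including the diagonals, which is the form in which the statement is recorded.
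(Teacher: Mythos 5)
Your derivation is correct and is exactly what the paper means by ``operations on rows and columns'': subtract the second column from the first and pull out $(v-w)$, then subtract the second row from the first and pull out $\overline{z}-\overline{u}=-(\overline{u}-\overline{z})$, after which cancellation against the left side of the determinantal identity gives the stated formula (and your closing continuity remark correctly accounts for the coincidence sets where the divided differences are defined as limits).
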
 

The value of $L$ on the diagonal is obviously non-negative:
$$ L(w,z; w,z) \geq 0, \ \ w,z \in \Delta_R.$$
In particular, the numerator appearing in the definition of this kernel is non-negative:
$$ |E(w,z)|^2 - E(w,w) E(z,z) \geq 0, \ \ w,z \in \Delta_R.$$
In other words, the exponential function satisfies a {\it reverse Cauchy-Schwarz inequality} at infinity, and as a matter of fact
even on the complement of the support of the defining function $g$. 
This well known property is encoded in the positive semi-definiteness of the kernel $\frac{1}{E(w,z)},$ see for details \cite{Gustafsson-Putinar-LNM}.
Equivalently, the finite difference expression of the kernel $L$ implies on the diagonal $(w,z) = (u,v)$ the inequality:
$$ \left| \begin{array}{cc}
                E([z,w],[z,w]) & E(w,[z,w])\\
                 E([z,w],u) & E(w,w)\\
                 \end{array} \right| \leq 0, \ z,w,u,v \in \Delta_R.$$

Similarly, by passing to the limit $v \rightarrow w$ and $u \rightarrow z$ in relation (\ref{finite-diff}) we obtain the values of $L$ on the anti-diagonal:
$$ L(w,z; z,w) = - E(w,z) 
\frac{\partial}{\partial w} 
\frac{ 
\frac{\partial E}{\partial \overline{z}}
}
{E} (w,z).$$

\begin{corollary}\label{flip} In the conditions of Theorem (\ref{free-infinity}), the kernel
$$ (z,w) \mapsto  L(w,z; z,w), \ \ z,w \in \Delta_R,$$
is positive semi-definite. 
 \end{corollary}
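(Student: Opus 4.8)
The plan is to pass to the Hilbert space picture, reduce everything to one positivity statement, and then exploit the extra structure of the exponential transform of a $[0,1]$--valued density.

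First I would use Theorem~\ref{free-infinity}: the hypotheses place us in its equivalent situations, so there is an irreducible bounded hyponormal operator $T$ with $[T^\ast,T]=\xi\otimes\xi$ and principal function $g$; choosing $R>\|T\|$, formula \eqref{factor-L} gives $L(w,z;u,v)=\langle\phi(w,z),\phi(u,v)\rangle$ with $\phi(w,z)=(T-w)^{-1}(T^\ast-\overline z)^{-1}\xi$. Specialising $u=z$, $v=w$ yields $L(w,z;z,w)=\langle\phi(w,z),\phi(z,w)\rangle$, and since $\phi(z,w)=\bigl((T-w)^{-1}(T^\ast-\overline z)^{-1}\bigr)^\ast\xi$ one gets $L(w,z;z,w)=\overline{L(z,w;w,z)}$, so this two--variable kernel is Hermitian and what must be shown is $\sum_{j,k}L(w_j,w_k;w_k,w_j)\,c_j\overline{c_k}\ge0$.

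Next I would rewrite the kernel analytically. By the formula obtained just before the corollary, $L(w,z;z,w)=-E_g(w,z)\,\partial_w\partial_{\overline z}\log E_g(w,z)$, and since $\log E_g(w,z)=-\tfrac1\pi\int\frac{g(\zeta)\,dA(\zeta)}{(\zeta-w)(\overline\zeta-\overline z)}$ this equals $E_g(w,z)\cdot\tfrac1\pi\int\frac{g(\zeta)\,dA(\zeta)}{(\zeta-w)^2(\overline\zeta-\overline z)^2}$. The decisive observation is that, since $g\ge0$ by part~1) of Theorem~\ref{free-infinity}, the kernel $K(w,z):=-\log E_g(w,z)=\tfrac1\pi\int g(\zeta)\,\tfrac1{\zeta-w}\,\overline{\bigl(\tfrac1{\zeta-z}\bigr)}\,dA(\zeta)$ is positive semi--definite (it is a Cauchy transform of a \emph{non--negative} density), and likewise $\partial_w\partial_{\overline z}K=\tfrac1\pi\int g(\zeta)\,\tfrac1{(\zeta-w)^2}\,\overline{\bigl(\tfrac1{(\zeta-z)^2}\bigr)}\,dA(\zeta)$ is positive semi--definite; this is the analytic shadow of the fact, recorded in the text, that $1/E_g=e^{K}$ is a reproducing kernel.

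The heart of the matter is then to deduce that $N:=E_g\cdot\partial_w\partial_{\overline z}K=e^{-K}\,\partial_w\partial_{\overline z}K$ is positive semi--definite. I would split $N=e^{-K}(\partial_wK)(\partial_{\overline z}K)-\partial_w\partial_{\overline z}E_g$, note that $-\partial_w\partial_{\overline z}E_g=\langle(T^\ast-\overline z)^{-2}\xi,(T^\ast-\overline w)^{-2}\xi\rangle$ is manifestly positive, and handle the first term by inserting the Cauchy representation of $K$, so that $e^{-K}(\partial_wK)(\partial_{\overline z}K)$ becomes a double $\zeta,\zeta'$ integral against $g(\zeta)g(\zeta')\ge0$, which I expect to reorganise — after symmetrising in $\zeta\leftrightarrow\zeta'$ and using $g\ge0$ once more — into a sum of squares. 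An equivalent operator--theoretic route: by the orthogonal expansion \eqref{double-resolvent}, $\phi(w,z)=\sum_{k\ge0}(-1)^kF_k(w,z)$ with the pieces $F_k(w,z)$ mutually orthogonal, so the signs cancel and $L(w,z;z,w)=\sum_{k\ge0}\langle F_k(w,z),F_k(z,w)\rangle$; then one uses the semi--separable form $F_k(w,z)=G_k(w)h(z)$ from \eqref{weighted-shift}--\eqref{direct-summand}, the positivity of $K$, and the recursions \eqref{recurrence} to see each summand is a positive kernel. The main obstacle is exactly this step: a product of the (non--positive) kernel $E_g$ with a positive kernel need not be positive, and the conclusion genuinely uses that $-\log E_g$ is a Cauchy transform of the \emph{non--negative} $g$ (a normalisation that, e.g., $1+c/(w\overline z)$ with $c>0$ violates). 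I would keep the single--disk case as a compass, where $L(w,z;z,w)=r^2\big/\big(w\overline z(w\overline z-r^2)\big)=\sum_{k\ge0}r^{2k+2}(w\overline z)^{-(k+2)}$ already exhibits the sum--of--squares decomposition the general argument must reproduce.
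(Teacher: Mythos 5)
Your proposal correctly sets up the Hilbert‐space picture, correctly notes that with $\phi(w,z)=(T-w)^{-1}(T^\ast-\overline z)^{-1}\xi$ one has $L(w,z;z,w)=\langle\phi(w,z),\phi(z,w)\rangle$, and correctly isolates the same two–term split the paper uses,
\[
L(w,z;z,w)\;=\;-\,\partial_w\partial_{\overline z}E_g(w,z)\;+\;\frac{(\partial_wE_g)(\partial_{\overline z}E_g)}{E_g}(w,z),
\]
with the first summand identified as the Gram kernel $\langle(T^\ast-\overline z)^{-2}\xi,(T^\ast-\overline w)^{-2}\xi\rangle$. That far you are exactly on the paper's track.

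The gap is the one you yourself flag as ``the main obstacle'': you never actually show the second summand is a positive kernel, and the route you propose will not deliver it. Inserting the Cauchy representation turns $e^{-K}(\partial_wK)(\partial_{\overline z}K)$ into a double $\zeta,\zeta'$ integral against $g(\zeta)g(\zeta')$, but the two factors have mixed powers, $\partial_wK\sim\int g/\big((\zeta-w)^2(\overline\zeta-\overline z)\big)$ and $\partial_{\overline z}K\sim\int g/\big((\zeta-w)(\overline\zeta-\overline z)^2\big)$; neither of these is of the Gram form $\langle h_w,h_z\rangle$, and $g\ge0$ alone does not make either factor (let alone their Schur product with the non‑positive weight $e^{-K}$) a positive kernel. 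The same issue haunts your operator‑theoretic alternative: after cancelling signs you still face $\sum_k\langle F_k(w,z),F_k(z,w)\rangle$, where the arguments are flipped, so orthogonality of the $F_k$ does not reduce this to a sum of Gram kernels. What the paper does to close the step is a one‑line observation you are missing: using the commutator once on $\langle(T-w)^{-1}(T^\ast-\overline z)^{-1}(T-w)^{-1}(T^\ast-\overline z)^{-1}\xi,\xi\rangle$, the second summand is recognised as the pointwise product
\[
\big\langle(T-w)^{-1}\xi,(T-z)^{-1}\xi\big\rangle\cdot\big\langle(T^\ast-\overline z)^{-1}\xi,(T^\ast-\overline w)^{-1}\xi\big\rangle
\;=\;\Big(\tfrac1{E_g}-1\Big)(w,z)\cdot\big(1-E_g\big)(w,z),
\]
i.e.\ a Schur (Hadamard) product of two of the standard positive kernels associated with the exponential transform, whence positivity follows from Schur's product theorem. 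Your argument needs this identification (or an equivalent) to be complete; the sum‑of‑squares reorganisation you gesture at is not a substitute.
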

 
 \begin{proof} 
 Indeed the basic commutator identity and the resolvent equation yield:
 $$ L(w,z; z,w) =  \langle (T-w)^{-1} (T^\ast - \overline{z})^{-1} (T-w)^{-1} (T^\ast - \overline{z})^{-1} \xi, \xi \rangle = $$
 $$ \langle (T^\ast - \overline{z})^{-2} \xi , (T^\ast - \overline{w})^{-2} \xi \rangle + \langle (T-w)^{-1}\xi, (T-z)^{-1}\xi \rangle \langle (T^\ast - \overline{z})^{-1} , (T^\ast - \overline{w})^{-1} \xi \rangle.$$
 The second term is a product of two positive definite kernels. In view of Schur product theorem, it is also positive semi-definite.
  \end{proof} 
 
We also record for later use the following consequence of Proposition \ref{FD}.
 
 \begin{corollary} For every $z,w,u,v \in \Delta_R$ one finds:
 \begin{equation}
  L(w,z; u,v) = $$ $$ \frac{ \langle (T^\ast - \overline{u})^{-1} (T^\ast - \overline{z})^{-1} \xi, (T^\ast - \overline{w})^{-1} \xi \rangle \langle (T^\ast - \overline{u})^{-1} \xi, (T^\ast - \overline{v})^{-1} (T^\ast - \overline{w})^{-1} \xi \rangle}{1- \langle (T^\ast - \overline{u})^{-1} \xi, (T^\ast - \overline{w})^{-1} \xi \rangle } + $$ $$
  + \langle (T^\ast - \overline{u})^{-1} (T^\ast - \overline{z})^{-1} \xi, (T^\ast - \overline{v})^{-1} (T^\ast - \overline{w})^{-1} \xi \rangle.
  \end{equation}
  \end{corollary}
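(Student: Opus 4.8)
The plan is to reduce the right-hand side, by means of the Hilbert space factorization of $E$ and the resolvent equation, to the closed form of $L$ that is already available to us. By the definition (\ref{L-kernel}) of the kernel, or equivalently by the divided-difference formula of Proposition \ref{FD},
$$ L(w,z;u,v) = \frac{E(v,z)E(w,u) - E(w,z)E(v,u)}{(v-w)(\overline{u}-\overline{z})\,E(w,u)} , $$
so it suffices to check that the right-hand side of the corollary evaluates to this same rational function of the four scalars $E(w,z),\,E(w,u),\,E(v,z),\,E(v,u)$.

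First I would insert Pincus' factorization (\ref{factor-E}), conveniently written as $E(a,b) = 1 - \langle (T^\ast-\overline{b})^{-1}\xi,(T^\ast-\overline{a})^{-1}\xi\rangle$, and then apply the resolvent equation $(T^\ast-\overline{a})^{-1}(T^\ast-\overline{b})^{-1} = \bigl((T^\ast-\overline{a})^{-1}-(T^\ast-\overline{b})^{-1}\bigr)/(\overline{a}-\overline{b})$ to each composite double resolvent occurring on the right-hand side. Paying attention to the conjugate-linearity of the inner product in its second slot, this produces the three identities
$$ \langle (T^\ast-\overline{u})^{-1}(T^\ast-\overline{z})^{-1}\xi,\,(T^\ast-\overline{w})^{-1}\xi\rangle = \frac{E(w,z)-E(w,u)}{\overline{u}-\overline{z}} , $$
$$ \langle (T^\ast-\overline{u})^{-1}\xi,\,(T^\ast-\overline{v})^{-1}(T^\ast-\overline{w})^{-1}\xi\rangle = \frac{E(w,u)-E(v,u)}{v-w} , $$
$$ \langle (T^\ast-\overline{u})^{-1}(T^\ast-\overline{z})^{-1}\xi,\,(T^\ast-\overline{v})^{-1}(T^\ast-\overline{w})^{-1}\xi\rangle = \frac{E(v,z)-E(w,z)-E(v,u)+E(w,u)}{(\overline{u}-\overline{z})(v-w)} , $$
while the quantity in the denominator is simply $1-\langle (T^\ast-\overline{u})^{-1}\xi,(T^\ast-\overline{w})^{-1}\xi\rangle = E(w,u)$.

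Substituting these four evaluations and factoring out $1/\bigl((\overline{u}-\overline{z})(v-w)\bigr)$, the assertion reduces to the elementary scalar identity
$$ \frac{(a-b)(b-c)}{b} + b + d - a - c = d - \frac{ac}{b} , $$
with $a=E(w,z)$, $b=E(w,u)$, $c=E(v,u)$, $d=E(v,z)$; since its right-hand side equals $\bigl(E(v,z)E(w,u)-E(w,z)E(v,u)\bigr)/E(w,u)$, the whole expression collapses to exactly $L(w,z;u,v)$ as displayed above, which completes the proof. (One could equally well start from the divided-difference form of Proposition \ref{FD} and carry out the same bookkeeping.) The only delicate point, and the main --- though mild --- obstacle, is keeping the conjugations straight: one must track the $v-w$ versus $\overline{v}-\overline{w}$ denominators that appear when the resolvent equation is applied to a resolvent sitting in the conjugate-linear second argument of the pairing, and one must use the correct convention in Pincus' formula for which argument of $E$ carries the bar.
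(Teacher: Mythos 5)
Your proof is correct and follows essentially the same route as the paper: both apply the Pincus factorization \eqref{factor-E} together with the resolvent equation to express each double-resolvent Gram quantity as a divided difference of $E$, and then reduce to the closed form \eqref{L-kernel} of $L$. The paper states this tersely by invoking Proposition~\ref{FD} and exhibiting one representative identity (e.g.\ $E(w,[z,u]) = -\langle (T^\ast - \overline{u})^{-1}(T^\ast - \overline{z})^{-1}\xi,(T^\ast - \overline{w})^{-1}\xi\rangle$); you spell out all three such identities and the final scalar algebra, which is exactly the bookkeeping the paper leaves to the reader.
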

 
\begin{proof} The finite differences combined with the resolvent equation imply identities of the form
$$ E(w,[z,u]) = - \frac{  \langle (T^\ast - \overline{z})^{-1} \xi, (T^\ast - \overline{w})^{-1} \xi \rangle - \langle (T^\ast - \overline{u})^{-1}  \xi, (T^\ast - \overline{w})^{-1} \xi \rangle}
{\overline{z} - \overline{u}} =  $$ $$ - \langle (T^\ast - \overline{u})^{-1} (T^\ast - \overline{z})^{-1} \xi, (T^\ast - \overline{w})^{-1} \xi \rangle.
$$

\end{proof} 
\bigskip

{\bf Example.} We end this section by some simple computations revealing the structure of the four argument kernel on a disk.
 For the unit disk we have:
$$ E_{\D}(w,z) = 1 - \frac{1}{w \overline{z}}, \ \ |z|, |w| > 1.$$
Accordingly,
$$ L_\D (w,z; u,v) = \frac{ (1 - \frac{1}{v \overline{z}}) (1 - \frac{1}{w \overline{u}}) - (1 - \frac{1}{w \overline{z}})(1 - \frac{1}{v \overline{u}})}{(v-w) (\overline{u}-\overline{z}) (1 - \frac{1}{w \overline{u}})} =$$
$$ \frac{ (v\overline{z}-1) (w\overline{u}-1) - (w\overline{z}-1) (v\overline{u}-1) } {(v-w) (\overline{u}-\overline{z}) (w\overline{u}-1) v \overline{z}} =
\frac{1}{w\overline{z} v \overline{u}} \cdot \frac{1}{1 - \frac{1}{w \overline{u}}}.$$
The finite differences appearing in formula (\ref{finite-diff}) read as follows:
$$ E_{\D}(w,[z,u]) = \frac{1}{w \overline{z} \overline{u}}, \ \  E_{\D}([v,w],u) = \frac{1}{v w \overline{u}},$$
respectively
$$ E_{\D}( [v,w], [z,u]) = \frac{-1}{v w \overline{z} \overline{u}}.$$
The numerator in formula (\ref{finite-diff}) becomes 
$$  \frac{-1}{v w \overline{z} \overline{u}} +  \frac{1}{v w^2 \overline{z} \overline{u}^2} - \frac{1}{v w^2 \overline{z} \overline{u}^2} =  \frac{-1}{v w \overline{z} \overline{u}} ,$$
which validates our prior computations.

A disk of center $a$ and radius $r$ will produce the following kernel:
\begin{equation}\label{disk}
L_{\D(a,r)} (w,z; u,v) = \frac{r^2}{(w-a) (\overline{z} - \overline{a}) (v-a)( \overline{u}- \overline{a})} \cdot 
\frac{1}{1 - \frac{r^2}{(w-a)( \overline{u}- \overline{a})}}.
\end{equation}
To the extent that the double finite difference
$$ L_{\D(a,r)} (w,z; u,v) E_{\D(a,r)}(w,u) = \frac{r^2}{(w-a) (\overline{z} - \overline{a}) (v-a)( \overline{u}- \overline{a})} $$
is remarkably simple.

\section{The islands do not overlap}

\subsection{The general framework}
Let $K_j, \ j \in J,$ be a finite collection of ``thick" compact subsets of the complex plane. By thick meaning that
every $K_j$ is the closed support of the characteristic function $\chi_{K_j}$ seen as an element of $L^1(\C, dA)$.
We are interested in an effective certificate assuring that the ``islands" $K_j, j \in J,$ do not overlap. 
More specifically, that means that all intersections $K_j \cap K_\ell, \ j \neq \ell,$ have zero area measure.

Denote by $E_j(w,z)$ the exponential transform associated to $K_j$, analytic in $w$, anti-analytic in $\overline{z}$, defined for $z,w$ in a neighborhood of infinity.

\begin{theorem}\label{archipelago}  Let $K_j, j \in J$ denote finitely many thick compact subsets of $\C$. Denote $E(w,z) = \prod_{j \in J} E_j(w,z)$.
The islands $K_j, j \in J,$ do not overlap if and only if there exists a positive constant $C$, with the property that the kernel
\begin{equation}
L(w,z; u,v) = \frac{ E(v,z)E(w,u) - E(w,z) E(v,u)}{(v-w) (\overline{u}-\overline{z}) E(w,u)}
\end{equation}
satisfies one of the equivalent positive semi-definiteness constraints stated in Theorem \ref{free-infinity}.
\end{theorem}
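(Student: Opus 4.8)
The plan is to reduce the statement entirely to Theorem~\ref{free-infinity}, exploiting the multiplicativity of the exponential transform. First I would record the elementary but crucial identity
\[
E(w,z) = \prod_{j\in J} E_j(w,z) = E_g(w,z), \qquad g = \sum_{j\in J}\chi_{K_j},
\]
valid for $z,w$ outside $\bigcup_{j\in J}K_j$, in particular in a neighborhood of infinity. This is immediate from the definition of $E_g$ as the exponential of $\frac{-1}{\pi}\int_\C \frac{g(\zeta)\,dA(\zeta)}{(\zeta-w)(\overline\zeta-\overline z)}$: the integral in the exponent is additive in $g$, so a product of exponential transforms corresponds to the sum of the underlying densities. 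Observe that $g$ is a nonnegative, integer-valued, compactly supported measurable function (its support is contained in the finite union of compacts $\bigcup_j K_j$), so it falls within the scope of Theorem~\ref{free-infinity}, and the $L$ kernel built from $E$ in the statement is exactly the four-argument kernel attached to $E_g$.

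Next I would set up the dictionary between the geometric hypothesis and the range constraint on $g$. Since all summands $\chi_{K_j}$ are nonnegative, $g$ takes values in $[0,1]$ almost everywhere if and only if $g\le 1$ almost everywhere, which in turn holds if and only if every pairwise intersection $K_j\cap K_\ell$, $j\ne\ell$, has zero area measure. Indeed, if some $K_i\cap K_j$ had positive area then $g\ge 2$ on a set of positive measure (no cancellation is possible, precisely because each $\chi_{K_j}\ge 0$); conversely, if every pairwise intersection is null then $g=\chi_{\bigcup_j K_j}$ a.e., which takes only the values $0$ and $1$. Thus ``the islands $K_j$ do not overlap'' is exactly condition~(1) of Theorem~\ref{free-infinity} applied to this particular $g$.

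Finally I would invoke Theorem~\ref{free-infinity} itself: for $E=E_g$ as above, condition~(1) there is equivalent to each of conditions~(2), (3), (4), i.e.\ to the existence of a constant $C>0$ making $L$ satisfy the stated positive semi-definiteness constraints on $\Delta_R^4$, on a polydisk about any other point, or on a suitable $U^4$ disjoint from $\bigcup_j K_j$. Chaining this with the previous paragraph yields the claimed equivalence.

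Since all the analytic substance — the operator-theoretic reconstruction of a pure hyponormal $T$ with rank-one self-commutator from the germ of $L$, and the encoding of boundedness together with the commutator identity $[T^\ast,T]=\xi\otimes\xi$ in the two positivity conditions — is already packaged in Theorem~\ref{free-infinity} (ultimately Theorem~4.1 of \cite{Putinar-1998}), the only work here is the bookkeeping above. I do not anticipate a genuine obstacle; the one point that warrants care is checking that $E=\prod_j E_j$ is literally the exponential transform of $\sum_j\chi_{K_j}$, and that this function is a bona fide ``$[0,1]$-or-not'' test case in the sense demanded by Theorem~\ref{free-infinity}, which is exactly where nonnegativity of the $\chi_{K_j}$ (hence no cancellation in the sum) is used.
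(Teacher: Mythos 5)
Your argument is correct and is exactly the reasoning the paper leaves implicit (it states Theorem~\ref{archipelago} without a written proof, relying on the reader to combine the multiplicativity $E_{\sum_j \chi_{K_j}} = \prod_j E_{\chi_{K_j}}$ with Theorem~\ref{free-infinity}). Your spelling-out of the dictionary — that $g = \sum_j \chi_{K_j}$ is nonnegative integer-valued, so $g\le 1$ a.e.\ is equivalent to all pairwise intersections being area-null, which is condition (1) of Theorem~\ref{free-infinity} — is precisely the intended proof, and the one point you flag as requiring care (that no cancellation can hide an overlap because every $\chi_{K_j}\ge 0$) is indeed where the hypothesis enters.
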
 

In view of Corollary \ref{flip},  the logarithmic derivative provides a necessary positivity condition for non-overlapping. Indeed, for
$w,z$ outside the union of the islands, the kernel
$$  L(w,z; z,w) = - \prod_{j \in J} E_j(w,z) \sum_{j \in J}  \frac{\partial}{\partial w} \frac{ \frac{\partial E_j}{\partial \overline{z}}}{E_j} (w,z)$$
is positive definite whenever ${\rm Area} (\cup_{j \neq \ell} K_j \cap K_\ell) = 0.$

Several other necessary conditions can be derived from restricting the values of parameters in Theorem \ref{archipelago}. One of them is 
the positive definiteness of the kernel
$$1-E(v,z) = w \overline{u} L(w,z; u,v)|_{u,w = \infty}.$$ We prove later on that this criterion does not separate even two disks. Even more, the example below shows that the denominator in the expression of the kernel $L$ cannot be dropped for non overlapping purposes.\bigskip

{\bf Example.} The hyponormal operator with rank-one self-commutator associated to the {\it ellipse} described parametrically by $\{ r \overline{z} + z; \ |z| =1\},$ where $r \in (0, 1)$ is
$r S^\ast + S.$ Here $S : \ell^2 \longrightarrow \ell^2$ stands for the unilateral shift, see Section 7.3.1 in \cite{Gustafsson-Putinar-LNM}.

We show that, in general the kernel $L(w,z; u,v)E(w,u)$ is not positive semi-definite. Indeed, in the hyponormal quantization setting
$$ L(w,z; u,v)E(w,u) =  \langle (T-v)^{-1} (T^\ast - \overline{u})^{-1} (T-w)^{-1} (T^\ast - \overline{z})^{-1} \xi, \xi \rangle \cdot $$ $$\cdot (1 -  \langle (T-w)^{-1} (T^\ast - \overline{u})^{-1} \xi, \xi \rangle).$$
The coefficient $c(v,z)$ of $w^{-2} \overline{u}^{-2}$ in the power series expansion at infinity is
$$ c(v,z)  = \langle T (T^\ast - \overline{z})^{-1} \xi, T (T^\ast - \overline{v})^{-1} \xi \rangle -  \langle (T^\ast - \overline{z})^{-1} \xi, (T^\ast - \overline{v})^{-1} \xi \rangle \| \xi \|^2.$$
If $L(w,z; u,v)E(w,u)$ were positive semi-definite, then $c(v,z)$ would be, too. But that implies that the operator $T$ is injective with closed range when restricted to the linear span of the vectors
$T^{\ast k}\xi, \ \ k \geq 0.$ 

We consider $T = r S^\ast + S - r - 1$. Let $e_0, e_1, e_2, \ldots$ be the standard orthonormal basis of $\ell^2$. The self-commutator is $[T^\ast, T] = (1-r^2) e_0 \langle \cdot, e_0\rangle$.
The vectors $e_0, T^\ast e_0, T^{\ast 2} e_0, \ldots$ span $\ell^2$. Moreover, the point $r+1$ belongs to the ellipse, which is also the boundary of the spectrum of $rS^\ast + S$. Since the operator $T$
has the point $z=0$ in the boundary of its spectrum, the range of $T$ is not closed. The preceding general observations imply that the kernel $L(w,z; u,v)E(w,u)$ associated to an ellipse is not positive semi-definite.
\bigskip

\subsection{Real algebra certificate of non-overlapping}

Assume that in the conditions of Theorem \ref{archipelago}, the islands $K_j$ are semialgebraic sets each defined by a single polynomial function. Then powerful methods of Real Algebraic Geometry enter into the game. 
We reproduce below, adapted  to our setting, a Positivstellensatz due to Stengle
\cite{Stengle-1974}. 

For the result below only, we assume $x = (x_1, x_2, \ldots, x_d)$ is the variable in $\R^d$ with $d \geq 1.$ We denote by $\Sigma^2 \subset \R[x]$ the convex cone of sums of square of polynomials with real coefficients.

\begin{proposition} Let $p_j \in \R[x], j \in J,$ be a finite set of non-constant polynomials depending on $d$ real variables. The sub-level sets
$\Omega_j = \{ x \in \R^d, p_j(x) > 0 \}, j \in J,$  are mutually disjoint if and only if for every distinct pair of indices $j, k \in J,$ there exist positive integers $p,q$ with the propery:
$$  -f_j^{2p} f_k^{2q} \in \Sigma^2 + f_j \Sigma^2 + f_k \Sigma^2 + f_j f_k \Sigma^2. $$

\end{proposition}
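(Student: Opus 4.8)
The plan is to reduce this to the classical Positivstellensatz of Stengle, applied to the fact that two sets are disjoint precisely when a certain polynomial inequality has no solution. First I would translate ``$\Omega_j \cap \Omega_k = \emptyset$'' into the statement that the system of polynomial inequalities $f_j(x) > 0$, $f_k(x) > 0$ has no real solution in $\R^d$. By Stengle's Positivstellensatz, a finite system of polynomial inequalities $f_1 > 0, \dots, f_m > 0$ is infeasible over $\R^d$ if and only if there is an identity $g + h = 0$ where $g$ lies in the multiplicative cone (preordering) generated by the $f_i$ and $h$ lies in the multiplicative monoid generated by the $f_i$ (i.e.\ $h$ is a product of the $f_i$ to various powers, in particular an element forced to be positive on the putative solution set). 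For the two-generator case $\{f_j, f_k\}$ the preordering is $\Sigma^2 + f_j \Sigma^2 + f_k \Sigma^2 + f_j f_k \Sigma^2$, and the monoid consists of products $f_j^{a} f_k^{b}$.

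Next I would extract from the Stengle identity the precise form stated in the proposition. The identity produced by the general Positivstellensatz reads $\sigma + f_j^{a} f_k^{b} = 0$ for some $\sigma$ in the preordering and some nonnegative integers $a,b$; equivalently $f_j^{a} f_k^{b} = -\sigma \in -(\Sigma^2 + f_j\Sigma^2 + f_k\Sigma^2 + f_jf_k\Sigma^2)$. To land exactly on the even exponents $2p, 2q$ asserted, I would multiply through by $f_j^{2p-a} f_k^{2q-b}$ for suitably large $p, q$ (i.e.\ choose $2p \geq a$, $2q \geq b$ with the missing factors chosen to preserve membership in the preordering: note $f_j \cdot \Sigma^2 \subseteq f_j\Sigma^2$, $f_j^2 \cdot \Sigma^2 \subseteq \Sigma^2$, and similarly for $f_k$, and one must track the four-term decomposition through these multiplications). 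A clean bookkeeping observation is that the preordering $\Sigma^2 + f_j\Sigma^2 + f_k\Sigma^2 + f_jf_k\Sigma^2$ is closed under multiplication by $f_j$, by $f_k$, and by squares, so multiplying $-\sigma$ by $f_j^{2p-a}f_k^{2q-b}$ keeps it inside $-(\text{preordering})$, yielding $-f_j^{2p}f_k^{2q} \in \Sigma^2 + f_j\Sigma^2 + f_k\Sigma^2 + f_jf_k\Sigma^2$, which is exactly the claimed membership.

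For the converse direction I would argue directly: if $-f_j^{2p} f_k^{2q} = s_0 + f_j s_1 + f_k s_2 + f_j f_k s_3$ with $s_i \in \Sigma^2$, then evaluating at any hypothetical common point $x_0 \in \Omega_j \cap \Omega_k$ gives $f_j(x_0) > 0$, $f_k(x_0) > 0$, hence the right-hand side is $\geq 0$ while the left-hand side is $< 0$ (since $f_j(x_0)^{2p} f_k(x_0)^{2q} > 0$) — a contradiction. Therefore $\Omega_j \cap \Omega_k = \emptyset$ for every distinct pair, i.e.\ the sub-level sets are mutually disjoint. This direction is elementary and is where the even exponents $2p, 2q$ pay off, since they guarantee strict positivity of $f_j^{2p}f_k^{2q}$ at any alleged intersection point regardless of signs elsewhere.

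The main obstacle I anticipate is purely a matter of careful normalization: Stengle's theorem in its raw form gives an identity with a product of the generators to \emph{some} nonnegative powers, not manifestly even powers, and with the ``certificate'' polynomial in the preordering rather than split into the specific four summands. Massaging that into the symmetric, even-exponent, four-term shape of the proposition requires the elementary but slightly fussy stability-under-multiplication argument for the two-generator preordering sketched above; one must be a little careful that multiplying by an odd power of $f_j$ does not push a term from the ``$\Sigma^2$'' or ``$f_k\Sigma^2$'' slot into a slot that is not one of the four allowed ones — but since the monoid generated by $1, f_j, f_k, f_jf_k$ modulo squares is exactly the Klein four-group $\{1, f_j, f_k, f_jf_k\}$, every product collapses back into one of the four slots, so no genuine difficulty arises. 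A secondary point worth a sentence in the writeup is that the $\Omega_j$ here are open sub-level sets $\{p_j > 0\}$ (matching Stengle's strict-inequality setting), so no closure or regularity hypothesis on the $p_j$ is needed beyond being non-constant, and the equivalence is exact.
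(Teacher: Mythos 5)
Your proof is correct; the paper itself gives no proof of this proposition (it simply states it as an adaptation of Stengle's Positivstellensatz with a citation to Stengle 1974), so the derivation you supply is exactly what the authors left implicit. Two small remarks. First, in the standard strict-inequality form of Stengle's theorem one rewrites $\{f_j>0,\,f_k>0\}$ as $\{f_j\ge 0,\,f_k\ge 0,\,f_j\ne 0,\,f_k\ne 0\}$, and infeasibility then yields an identity $g + f^2 = 0$ with $g$ in the preordering and $f$ in the multiplicative monoid generated by $f_j, f_k$; since the monoid element is squared, you automatically get $f^2 = f_j^{2a} f_k^{2b}$, so the odd-to-even massaging step you flag is not actually needed under that formulation. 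Second, what is genuinely needed — and what you do touch on — is bumping $a=0$ or $b=0$ up to a positive integer, which follows by multiplying the identity by $f_j^2$ or $f_k^2$; your observation that the two-generator preordering is stable under multiplication by $f_j$, $f_k$, and squares (the ``Klein four-group modulo squares'' point) is precisely the reason this and any further exponent adjustment stays within the four-slot decomposition. The converse direction by evaluation at a hypothetical common point is exactly right and uses the even exponents in the intended way.
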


Stengle's certificate is an offspring of the celebrated Tarski's elimination of quantifiers theorem.  The monograph \cite{BCR} gives an authoritative perspective on both real algebra and real algebraic geometry, until 1990-ies.
When applied to the $L$ kernel associated to principal semi-algebraic sets contained in $\R^2$, the main difference between Stengle's condition of non-overlapping and Theorem \ref{archipelago} is the array of algebra identities involving polynomials of unspecified degree, versus a single kernel positivity criterion.

\subsection{Merging islands}
For the rest of this section we concentrate on analyzing the process of adding a separate island to an archipelago.

\begin{proposition} Let $K_1$ and $K_2$ be two (area measure) disjoint, thick compact sets contained in the complex plane.
The corresponding four term kernels are denoted $L_j$, while the exponential kernels are $E_j, j =1,2.$
Then for sufficiently large values of $u,v,w,z,$ the kernel corresponding to the union $K_1 \cup K_2$ is:
\begin{equation}\label{merging-1}
L(w,z; u,v) = L_1(w,z; u,v) E_2(v,z) + L_2(w,z; u,v) E_1(v,z) - $$ $$(v-w)(\overline{u}-\overline{z}) L_1(w,z; u,v) L_2(w,z; u,v).
\end{equation}

\end{proposition}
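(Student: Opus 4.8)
The plan is to reduce everything to the multiplicativity of the exponential transform plus one short algebraic identity. First I would observe that, since $K_1$ and $K_2$ overlap in a set of zero area measure, $\chi_{K_1\cup K_2}=\chi_{K_1}+\chi_{K_2}$ as elements of $L^1(\C,dA)$; consequently the exponent in the definition of the exponential transform, being an integral against $dA$, is additive, and therefore the exponential transform of the union factors as a product $E(w,z)=E_1(w,z)\,E_2(w,z)$ for $w,z$ in a common neighborhood of infinity. This is the only place the disjointness hypothesis enters.

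Next I would substitute this factorization into the defining formula \eqref{L-kernel} for the kernel $L$ of the union and compare with the analogous formulas for $L_1$ and $L_2$. To keep the bookkeeping transparent I would abbreviate the four relevant values of each $E_j$ by $a_j=E_j(w,z)$, $b_j=E_j(v,z)$, $c_j=E_j(w,u)$, $d_j=E_j(v,u)$ for $j=1,2$, and set $\delta=(v-w)(\overline{u}-\overline{z})$. Restricting $|w|,|z|,|u|,|v|$ to be large enough that all $c_j=E_j(w,u)$ are nonzero (they are close to $1$), and with $v\neq w$, $u\neq z$, the three kernels become $\delta\,c_1 L_1=b_1c_1-a_1d_1$, $\delta\,c_2 L_2=b_2c_2-a_2d_2$, and $\delta\,c_1c_2\,L=b_1b_2c_1c_2-a_1a_2d_1d_2$.

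The heart of the matter is then the elementary identity: writing $P_j=b_jc_j$ and $Q_j=a_jd_j$,
\[
(P_1-Q_1)P_2+(P_2-Q_2)P_1-(P_1-Q_1)(P_2-Q_2)=P_1P_2-Q_1Q_2,
\]
which one checks by expanding the products. Dividing through by $\delta\,c_1c_2$ and recognizing the left-hand side, term by term, as $b_2L_1+b_1L_2-\delta L_1L_2$ (using $b_2=E_2(v,z)$ and $b_1=E_1(v,z)$), while the right-hand side is exactly $L$, yields precisely \eqref{merging-1}.

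I do not expect a genuine obstacle here: once the factorization is in hand the proof is a direct computation, and the seemingly cubic expression on the right of \eqref{merging-1} collapses to the difference of two products. The only points deserving care are the justification of the multiplicativity $E=E_1E_2$ from the a.e.\ disjointness of $K_1$ and $K_2$, and the restriction to a neighbourhood of infinity (with $v\neq w$ and $u\neq z$) where all the rational expressions defining $L$, $L_1$, $L_2$ are meaningful because the denominators $E_j(w,u)$ do not vanish; by analytic continuation the identity then propagates to the full domain of definition of both sides.
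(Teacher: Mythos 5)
Your proof is correct and follows essentially the same path as the paper's: both rest on the factorization $E=E_1E_2$ arising from a.e.\ disjointness, combined with the algebraic identity $X_1+X_2-X_1X_2 = 1-(1-X_1)(1-X_2)$ applied to $X_j = (v-w)(\overline{u}-\overline{z})L_j/E_j(v,z)$, whose multiplicativity (via $1-X_j = E_j(w,z)E_j(v,u)/[E_j(v,z)E_j(w,u)]$) the paper records explicitly while you verify the equivalent polynomial identity in $P_j, Q_j$ by direct expansion. The substance is the same; the paper's phrasing in terms of a multiplicative set function is merely a more conceptual wrapper around your computation.
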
 

\begin{proof} By its definition, the kernels $E_j(v,z)$ are invertible for large values of the two arguments. One can write in general
$$\frac{(v-w) (\overline{u}-\overline{z})L(w,z; u,v)}{E(v,z)} = 1- \frac{E(w,z) E(v,u)}{E(v,z) E(w,u)}.$$
Consequently, 
$$ 1 - \frac{(v-w) (\overline{u}-\overline{z})L(w,z; u,v)}{E(v,z)}$$ is multiplicative as a set function (of the generating thick compact $K$).
Whence
$$ \frac{(v-w) (\overline{u}-\overline{z})L(w,z; u,v)}{E(v,z)} = \frac{(v-w) (\overline{u}-\overline{z})L_1(w,z; u,v)}{E_1(v,z)} + $$ $$ 
\frac{(v-w) (\overline{u}-\overline{z})L_2(w,z; u,v)}{E_2(v,z)} - \frac{(v-w)^2 (\overline{u}-\overline{z})^2 L_1(w,z; u,v) L_2(w,z; u,v)}{E_1(v,z) E_2(v,z)}.$$
Since all kernels are analytic/antianalytic in the respective variables, one can divide the common factor $(v-w) (\overline{u}-\overline{z})$. Moreover,
$E = E_1 E_2$ and the proof is complete.
\end{proof} 

Some equivalent forms of identity (\ref{merging-1}) are:
\begin{equation}\label{merging-2}
 \frac{L(w,z; u,v)}{E(v,z)} = \frac{L_1(w,z; u,v)}{E_1(v,z)}  + \frac{L_2(w,z; u,v)}{E_2(v,z)}  -  $$ $$ 
(v-w) (\overline{u}-\overline{z})    \frac{L_1(w,z; u,v)}{E_1(v,z)}  \frac{L_2(w,z; u,v)}{E_2(v,z)} ,
\end{equation}
or
\begin{equation}\label{merging-3}
L(w,z; u,v) = L_1(w,z; u,v) E_2(v,z) + \frac{E_1(w,z) E_1(v,u)}{ E_1(w,u)} L_2(w,z; u,v) .
\end{equation}
Dividing by $E(v,z)$ one finds:
\begin{equation}\label{merging-4}
\frac{L(w,z; u,v)}{E(v,z)} = \frac{L_1(w,z; u,v)}{E_1(v,z)}  + \frac{E_1(w,z) E_1(v,u)}{ E_1(w,u)E_1(v,z)} \frac{L_2(w,z; u,v)}{E_2(v,z)}.
\end{equation}
Finally, multiplying equation (\ref{merging-3}) by $E(w,u)$ one obtains:
\begin{equation}\label{merging-5}
L(w,z; u,v)E(w,u) = [L_1(w,z; u,v)E_1(w,u)] E_2(v,z) E_2(w,u) + $$ $$
[L_2(w,z; u,v)E_2(w,u)] E_1(w,z)E_1(v,u).
\end{equation}

If the above identities are true for large values of $w,z,u,v$, then their validity extends to $w,z,u,v \in [\C \cup \{\infty\}]  \setminus [K_1 \cup K_2].$
Identity (\ref{merging-5}) simply represents the product rule for the double finite difference $L(w,z; u,v)E(w,u)$. 
From these simple computations we derive the monotonicity of the quotient kernels appearing in the last identity.

\begin{proposition} Let $0 \leq g \leq f \leq 1$ be measurable functions with compact support, on the complex plane.
Then
$$ \frac{L_g (w,z; u,v)}{E_g(v,z)}  \leq  \frac{L_f (w,z; u,v)}{E_f(v,z)} $$
in the sense of kernels, both defined on the complement of the support of $f$.
\end{proposition}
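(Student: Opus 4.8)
The plan is to reduce the statement to the merging identity \eqref{merging-2} (already established) together with the Schur product theorem. Put $h=f-g$; since $0\le g$ and $f\le 1$ one has $0\le h\le 1$, and $h$ is measurable with $\supp h\subseteq\supp f$, so $E_h$ and the four--argument kernel $L_h$ are well defined on the complement of $\supp f$. The exponential transform is multiplicative under addition of shade functions, so $E_f=E_gE_h$, and the computation that produced \eqref{merging-2} for two disjoint islands uses \emph{nothing beyond} this multiplicativity; hence it applies verbatim to the pair $(g,h)$ and gives
$$\frac{L_f}{E_f(v,z)}=\frac{L_g}{E_g(v,z)}+\frac{L_h}{E_h(v,z)}-(v-w)(\overline{u}-\overline{z})\,\frac{L_g}{E_g(v,z)}\,\frac{L_h}{E_h(v,z)}.$$
Since \eqref{L-kernel} yields $(v-w)(\overline{u}-\overline{z})\frac{L_g(w,z;u,v)}{E_g(v,z)}=1-\frac{E_g(w,z)E_g(v,u)}{E_g(v,z)E_g(w,u)}$, rearranging the previous display produces the exact factorization
$$\frac{L_f(w,z;u,v)}{E_f(v,z)}-\frac{L_g(w,z;u,v)}{E_g(v,z)}=\frac{L_h(w,z;u,v)}{E_h(v,z)}\cdot\frac{E_g(w,z)E_g(v,u)}{E_g(v,z)E_g(w,u)},$$
first for $w,z,u,v$ near infinity and then, by analytic/anti--analytic continuation, on all of $((\C\cup\{\infty\})\setminus\supp f)^4$ (the apparent poles on $v=w$ and $u=z$ are removable).

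It then suffices to check that each of the three factors on the right is a positive semidefinite four--argument kernel; the Schur product theorem finishes the argument. First, $L_h\succeq 0$ because $h$ is $[0,1]$--valued: it admits the Hilbert space factorization \eqref{factor-L} (equivalently, this is part of Theorem \ref{free-infinity}(3) applied to $h$). Second, the kernel $(w,z;u,v)\mapsto 1/E_h(v,z)$ depends only on $z$ and $v$, so after the substitution $(w,z;u,v)\mapsto(w_k,z_k;w_\ell,z_\ell)$ it becomes the matrix $\big(1/E_h(z_\ell,z_k)\big)_{k,\ell}$, which is the transpose of the positive semidefinite matrix $\big(1/E_h(z_k,z_\ell)\big)_{k,\ell}$ (positive semidefinite since the two--variable kernel $1/E_h$ is, by the reverse Cauchy--Schwarz property of the exponential transform recalled above), and transposition preserves positive semidefiniteness. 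Third, writing $\Phi_g(w,z;u,v):=\frac{E_g(w,z)E_g(v,u)}{E_g(v,z)E_g(w,u)}$ for the last factor, one absorbs the scalars into the weights: with $\mu_k=E_g(w_k,z_k)\lambda_k$ and using $\overline{E_g(a,b)}=E_g(b,a)$,
$$\sum_{k,\ell}\Phi_g(w_k,z_k;w_\ell,z_\ell)\,\lambda_k\overline{\lambda_\ell}=\sum_{k,\ell}\frac{\mu_k\overline{\mu_\ell}}{E_g(z_\ell,z_k)\,E_g(w_k,w_\ell)},$$
and the matrix $\big(1/[E_g(z_\ell,z_k)E_g(w_k,w_\ell)]\big)_{k,\ell}$ is the entrywise product of the positive semidefinite matrices $\big(1/E_g(z_\ell,z_k)\big)$ and $\big(1/E_g(w_k,w_\ell)\big)$, hence itself positive semidefinite. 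So all three factors are $\succeq 0$, their Schur product is $\succeq 0$, and this is exactly the asserted inequality $\frac{L_g}{E_g(v,z)}\le\frac{L_f}{E_f(v,z)}$ on the complement of $\supp f$.

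The formal steps — the merging identity for the pair $(g,f-g)$ and the final Schur product — are routine. The one place that needs genuine care is the bookkeeping of the last paragraph: pinning down the correct ``diagonal substitution'' $(w,z;u,v)\mapsto(w_k,z_k;w_\ell,z_\ell)$ that encodes positivity of a four--argument kernel, and verifying that $1/E_h(v,z)$ and $\Phi_g$ actually come out positive after the transposition and the rescaling of weights. A minor preliminary that should be stated explicitly is that $E_g$ and $E_h$ never vanish on the complement of $\supp f$, so every division above is legitimate there and the continuation argument is unobstructed.
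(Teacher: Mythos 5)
Your proof is correct and follows essentially the same route as the paper: apply the merging identity (which, as you observe, rests only on the multiplicativity $E_f=E_gE_h$) to isolate the difference $\frac{L_f}{E_f(v,z)}-\frac{L_g}{E_g(v,z)}$ as a single product, then invoke the Schur product theorem. The paper's proof is a two-line pointer to (\ref{merging-4}) and Schur, while you spell out the positive semi-definiteness of each factor and note the non-vanishing of $E_g$, $E_h$ off the support; these are exactly the details the paper leaves to the reader.
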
 

\begin{proof} We exploit the identity (\ref{merging-4}), with $L_1 = L_g$ and $L_2 = L_{f-g}$. Then $L = L_f$.
Note that the second term in (\ref{merging-4}) is positive definite as a kernel of $(w,z), (u,v)$ by Schur Product Theorem. 
\end{proof}

In particular, we infer from this proposition that, even if the domains overlap, the quotient kernel above remains positive definite.
To this aim we take now $L_1 = L_g, L_2 = L_f$ and $L = L_{f+g}$.

\begin{corollary} Let $g,f: \C \longrightarrow [0,1]$ be measurable functions of compact support. The kernel
$$  \frac{L_{f+g} (w,z; u,v)}{E_{f+g}(v,z)}, \ \ w,z,u,v \notin {\rm supp}(f) \cup {\rm supp}(g),$$
is positive definite.
\end{corollary}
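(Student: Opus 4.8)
The plan is to deduce the corollary directly from the identity~(\ref{merging-4}) applied with a specific choice of weights. Take $L_1 = L_g$, $L_2 = L_f$, so that the pointwise sum of the two underlying weights is $f+g$, and hence $L = L_{f+g}$ and $E = E_{f+g} = E_f E_g$. Substituting into~(\ref{merging-4}) gives
\begin{equation*}
\frac{L_{f+g}(w,z;u,v)}{E_{f+g}(v,z)} = \frac{L_g(w,z;u,v)}{E_g(v,z)} + \frac{E_g(w,z) E_g(v,u)}{E_g(w,u) E_g(v,z)} \cdot \frac{L_f(w,z;u,v)}{E_f(v,z)}.
\end{equation*}

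The strategy is then to check that each of the two summands on the right is a positive semi-definite kernel in the pair $((w,z),(u,v))$, after which their sum is as well. For the first summand, $\frac{L_g}{E_g(v,z)}$, positivity is already available: since $g$ takes values in $[0,1]$, Theorem~\ref{free-infinity} guarantees $L_g \succeq 0$, and dividing by $E_g(v,z)$ preserves positivity because the kernel $1/E_g(w,z)$ is positive semi-definite (the reverse Cauchy--Schwarz property recorded after Proposition~\ref{FD}) — indeed $\frac{L_g(w,z;u,v)}{E_g(v,z)}$ is, up to the Hilbert space factorizations of $L_g$ and of $1/E_g$, a Schur product of two positive kernels. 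For the second summand one does the same: $\frac{L_f}{E_f(v,z)} \succeq 0$ for the same reason, and the prefactor $\frac{E_g(w,z) E_g(v,u)}{E_g(w,u) E_g(v,z)}$ must be shown to be a positive semi-definite kernel. Writing it via the Hilbert space model as $\langle(T_g^\ast-\overline z)^{-1}\xi_g,(T_g^\ast-\overline w)^{-1}\xi_g\rangle$-type factors over $1 - \langle\cdots\rangle$ exhibits it again as a Schur product of positive kernels (this is essentially the computation behind the proof of the preceding Proposition, where the second term of~(\ref{merging-4}) is identified as positive by the Schur Product Theorem). Hence both summands are $\succeq 0$, and the corollary follows.

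The main obstacle I expect is bookkeeping about \emph{which} pair of variables plays the role of the "points" and which the "weights" in the Kolmogorov/Schur factorizations — the kernel $L$ has four arguments and the positivity in Theorem~\ref{free-infinity} is with respect to the diagonal pairing $(w,z)\leftrightarrow(u,v)$, so I must make sure that the factor $\frac{E_g(w,z)E_g(v,u)}{E_g(w,u)E_g(v,z)}$ is indeed positive semi-definite in exactly that pairing and not merely in some other grouping of the variables; this amounts to recognizing it, through~(\ref{factor-E}) and the resolvent identity, as $\langle\eta(w,z),\eta(u,v)\rangle$ for a suitable vector-valued map $\eta$, which is precisely what the proof of the Proposition preceding the Corollary already does. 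Once that identification is in place, everything else is the Schur Product Theorem plus the two positivity facts quoted above.

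Alternatively, and perhaps more transparently, one can avoid re-deriving the positivity of the prefactor by appealing to the Proposition that immediately precedes the Corollary: that Proposition already establishes $\frac{L_g}{E_g(v,z)} \le \frac{L_{f+g}}{E_{f+g}(v,z)}$ in the sense of kernels (its proof uses exactly identity~(\ref{merging-4}) with $L_1 = L_g$, $L_2 = L_{(f+g)-g} = L_f$, and the Schur-positivity of the second term). Combining this with $\frac{L_g}{E_g(v,z)} \succeq 0$ gives $\frac{L_{f+g}}{E_{f+g}(v,z)} \succeq 0$ at once. This is the shortest route and I would present it as the proof, noting the key input is that the extra summand in~(\ref{merging-4}) is a Schur product of positive kernels.
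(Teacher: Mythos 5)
Your main route reproduces the paper's own argument: specialize identity~\eqref{merging-4} to $L_1 = L_g$, $L_2 = L_f$, $L = L_{f+g}$, note $\frac{L_g}{E_g(v,z)} \succeq 0$ (Schur product of $L_g \succeq 0$, guaranteed by Theorem~\ref{free-infinity} since $g\in[0,1]$, with the positive kernel $1/E_g(v,z)$), and note the second summand is the Schur product of the positive prefactor $\frac{E_g(w,z)E_g(v,u)}{E_g(w,u)E_g(v,z)}$ with $\frac{L_f}{E_f(v,z)} \succeq 0$ (using $f\in[0,1]$). This is exactly what the paper intends, and your concern about the variable pairing in the Kolmogorov/Schur factorizations is a legitimate one that does check out.

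One caveat on your preferred ``shortest route'': the monotonicity Proposition immediately preceding the Corollary is stated under the hypothesis $0\le g \le f \le 1$, and since $f+g$ may exceed $1$ you cannot literally invoke its \emph{statement} for the pair $(g,\,f+g)$. Your parenthetical correctly identifies that it is the Proposition's \emph{proof} that transfers --- the Schur-positivity of the second term in~\eqref{merging-4} requires only $f\in[0,1]$, not $f+g\le 1$ --- but the phrase ``that Proposition already establishes'' overstates the case. As written, a referee could reasonably object; you should say you are re-running the Proposition's proof with $L_2 = L_f$, not citing its conclusion.
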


Of immediate relevance are two specializations of the kernel $L$ at points at infinity:
$$ M(w,z) = [v \overline{u} L(w,z; u,v)]_{v,u = \infty} = \langle (T^\ast - \overline{z})^{-1} \xi, (T^\ast - \overline{w})^{-1} \xi \rangle,$$
and
$$ N(w,u) = [v \overline{z}  L(w,z; u,v)]_{v,z = \infty} = \langle (T-w)^{-1} \xi, (T-u)^{-1} \xi \rangle.$$
They represent Gram matrices of the resolvents of $T^\ast$, respectively $T$. Starting from the definition of the kernel $L$, elementary algebra yields:
$$ E(w,z) = 1 - M(w,z),$$
and
$$ \frac{1}{E(w,z)} = 1 + N(w,z).$$
The second identity is also derivable from the determinantal definition of the exponential kernel:
$$ E(w,z) = \det [ (T-w)(T^\ast - \overline{z})(T-w)^{-1} (T^\ast - \overline{z})^{-1}],$$
and obviously
$$ \frac{1}{E(w,z)} = \det [(T^\ast - \overline{z})(T-w)(T^\ast - \overline{z})^{-1}(T-w)^{-1}].$$
See \cite{Gustafsson-Putinar-LNM} for details.

Assume as before that the thick compact $K$ is the disjoint (in the sense of Lebesgue measure) union of $K_1$ and $K_2$. Formula (\ref{merging-1}) becomes
$$ 1 + M(w,z) = (1+M_1(w,z))(1+M_2(w,z)), \ \ w,z \notin K.$$
Let $T_j \in {\mathcal L}(H_j)$ denote the associated hyponormal operators. To simplify notation write $\eta(z) = (T - {z})^{-1}\xi$, $\eta_j(z) = (T_j - {z})^{-1}\xi_j, \ j =1,2.$
We find:
$$ \langle \eta(z), \eta(w) \rangle =  \langle \eta_1(z), \eta_1(w) \rangle +  \langle \eta_2(z), \eta_2(w) \rangle + $$ $$
\langle \eta_1(z) \otimes \eta_2(z), \eta_1(w) \otimes \eta_2(w) \rangle, \ \  z \notin K.$$
In other terms the Gram kernels of the family of vectors $\eta(z) \in H$ and \\
$(\eta_1(z), \eta_2(z),  \eta_1(z) \otimes \eta_2(z)) \in H_1 \oplus H_2 \oplus H_1 \overline{\otimes} H_2,$
coincide. We denote by $\overline{\otimes}$ the Hilbertian tensor product. Consequently there exists an isometric linear transform
$$ V: H \longrightarrow H_1 \oplus H_2 \oplus H_1 \overline{\otimes} H_2,$$ satisfying
$$ V \eta(z) = (\eta_1(z), \eta_2(z),  \eta_1(z) \otimes \eta_2(z)).$$ In particular, equating the first order term at infinity: $V\xi = (\xi_1, \xi_2, 0).$
But $T \eta(z) = \xi + z \eta(z),$ and similarly for $T_j, j =1,2.$ In addition, multiplication by $z$ commutes with the operator $V$, to the effect that:
$$ V (T \eta(z) -\xi) = (T_1 \eta_1(z) - \xi_1, T_2 \eta_2(z) - \xi_2, (T_1 \eta_1(z) - \xi_1) \otimes \eta_2(z)) = $$ $$
(T_1 \eta_1(z) - \xi_1, T_2 \eta_2(z) - \xi_2, \eta_1(z) \otimes (T_2 \eta_2(z)- \xi_2)).$$
The free terms cancel:
$$ V T \eta(z)  = (T_1 \eta_1(z), T_2 \eta_2(z), (T_1 \eta_1(z) - \xi_1) \otimes \eta_2(z)) = $$ $$
(T_1 \eta_1(z), T_2 \eta_2(z), \eta_1(z) \otimes (T_2 \eta_2(z)- \xi_2)).$$
Since the vector $\xi$ is rationally $T$-cyclic,  the vectors $\eta(z), z \notin K$ span the Hilbert space $H$. In conclusion we have proved the following description of the hyponormal quantization of the merging island.

\begin{theorem} Let $K_1$ and $K_2$ be two (area measure) disjoint, thick compact sets contained in the complex plane. Let $T \in {\mathcal L}(H)$ denote the  hyponormal operator 
associated to $K_1 \cup K_2$, and $T_j \in {\mathcal L}(H_j)$ the operators corresponding to $K_J, j=1,2.$ There exists a linear isometric transform
$V: H \longrightarrow H_1 \oplus H_2 \oplus H_1 \overline{\otimes} H_2$, satisfying
\begin{equation} 
 V T =  \left( \begin{array}{ccc}
         T_1 & 0 & 0 \\
         0 & T_2 & 0 \\
         0 & - \xi_1 \otimes I & T_1 \otimes I\\
         \end{array} \right ) V =  \left( \begin{array}{ccc}
         T_1 & 0 & 0 \\
         0 & T_2 & 0 \\
          - I \otimes \xi_2& 0  & I \otimes T_2
         \end{array} \right ) V.
\end{equation}
\end{theorem}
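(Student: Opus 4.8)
The plan is to follow the computation that is already laid out in the paragraphs immediately preceding the statement, organizing it into a clean argument. The starting point is the merging identity (\ref{merging-1}), or rather its reformulation in terms of the Gram kernel $M$: writing $E = 1 - M$ and using multiplicativity of $E = E_1 E_2$ one gets $1 + M(w,z) = (1+M_1(w,z))(1+M_2(w,z))$, which expands to
$$ \langle \eta(z), \eta(w) \rangle = \langle \eta_1(z), \eta_1(w) \rangle + \langle \eta_2(z), \eta_2(w) \rangle + \langle \eta_1(z), \eta_1(w)\rangle \langle \eta_2(z), \eta_2(w)\rangle,$$
where $\eta(z) = (T-z)^{-1}\xi$ and $\eta_j(z) = (T_j - z)^{-1}\xi_j$. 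The last term is recognized as $\langle \eta_1(z) \otimes \eta_2(z), \eta_1(w) \otimes \eta_2(w)\rangle$ in the Hilbertian tensor product $H_1 \overline{\otimes} H_2$. So the Gram kernel of the family $\{\eta(z)\}_{z \notin K} \subset H$ coincides with the Gram kernel of the family $\{(\eta_1(z), \eta_2(z), \eta_1(z)\otimes \eta_2(z))\}_{z \notin K} \subset H_1 \oplus H_2 \oplus H_1 \overline{\otimes} H_2$, and by the Kolmogorov/Moore uniqueness of reproducing-kernel models there is a unique isometric linear map $V$ on the closed span of the $\eta(z)$ intertwining the two families. Here one uses the fact (recorded in the Preliminaries, via $T$-rational cyclicity of $\xi$) that the vectors $\eta(z)$, $z \notin K$, already span $H$, so $V$ is defined on all of $H$; $V$ need not be surjective, hence ``isometric'' rather than ``unitary.''

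Next I would compute how $V$ transports the action of $T$. The key algebraic fact is the resolvent-type identity $T\eta(z) = \xi + z\eta(z)$, and likewise $T_j \eta_j(z) = \xi_j + z\eta_j(z)$. Since multiplication by the scalar $z$ commutes with $V$ (both sides of the intertwining are antianalytic in the same variable, so the statement ``$V(z\eta(z)) = z\, V\eta(z)$'' is formal), one gets
$$ V(T\eta(z) - \xi) = \big(T_1\eta_1(z) - \xi_1,\; T_2\eta_2(z) - \xi_2,\; (T_1\eta_1(z) - \xi_1)\otimes \eta_2(z)\big),$$
where in the third slot I have used $(z\eta_1(z))\otimes \eta_2(z) = (T_1\eta_1(z) - \xi_1)\otimes \eta_2(z)$. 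But equally well $z(\eta_1(z)\otimes\eta_2(z)) = \eta_1(z)\otimes(z\eta_2(z)) = \eta_1(z)\otimes(T_2\eta_2(z) - \xi_2)$, giving the second form. Comparing first-order terms at infinity (where $\eta(z) \sim \xi/z$, so $V\xi = (\xi_1, \xi_2, 0)$) lets the free terms cancel, and one is left with
$$ VT\eta(z) = \big(T_1\eta_1(z),\; T_2\eta_2(z),\; (T_1\eta_1(z) - \xi_1)\otimes\eta_2(z)\big) = \big(T_1\eta_1(z),\; T_2\eta_2(z),\; \eta_1(z)\otimes(T_2\eta_2(z) - \xi_2)\big).$$
Reading the right-hand side as the image under the claimed block operator applied to $V\eta(z) = (\eta_1(z),\eta_2(z),\eta_1(z)\otimes\eta_2(z))$: the first form corresponds to $T_1 \oplus T_2$ on the first two slots, and on the third slot $(T_1\otimes I)(\eta_1\otimes\eta_2) - (\xi_1\otimes I)(\eta_1\otimes\eta_2)$, i.e. the row $(-\xi_1\otimes I,\; 0,\; T_1\otimes I)$; here $\xi_1 \otimes I$ denotes the operator $H_1\overline{\otimes}H_2 \to H_1\overline{\otimes}H_2$ sending $x_1\otimes x_2 \mapsto \langle x_1,\xi_1\rangle\,\xi_1\otimes x_2$ — or, more precisely, one should read it so that $(\xi_1\otimes I)(\eta_1(z)\otimes\eta_2(z)) = \xi_1\otimes\eta_2(z)$, which requires being a bit careful about whether this is really a well-defined bounded operator (see below). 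The second form similarly gives the row $(-I\otimes\xi_2,\; 0,\; I\otimes T_2)$. Since the $\eta(z)$ span $H$, the identity $VT = (\text{block operator})\,V$ holds on all of $H$, which is the assertion.

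The step I expect to be the main obstacle — really the only delicate point — is making rigorous sense of the operators sitting in the third block row, and verifying that $V$ is genuinely an isometry onto a subspace on which these operators are well-defined and bounded. The subtlety is that ``$\xi_1 \otimes I$'' is not the map $x_1 \otimes x_2 \mapsto \xi_1 \otimes x_2$ with $x_1$ arbitrary — that is unbounded — but rather must be interpreted through the specific vectors in play: the range of $V$ is the closed span of the $\eta_1(z)\otimes\eta_2(z)$, and on that subspace the correspondence $\eta_1(z)\otimes\eta_2(z) \mapsto \xi_1\otimes\eta_2(z)$ extends boundedly precisely because $VT$ is bounded (being $T$ conjugated by an isometry). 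The cleanest way to handle this is to \emph{not} claim the block matrix acts on the whole of $H_1\oplus H_2\oplus H_1\overline{\otimes}H_2$, but only to assert the intertwining relation $VT = \mathbf{B}V$ where $\mathbf{B}$ is defined a priori only on $\mathrm{ran}\,V$ by the formulas above and extends the way one wants because $\mathrm{ran}\,V$ is $\mathbf{B}$-invariant; alternatively one checks directly that $\xi_1\otimes I$ (acting by $x_1\otimes x_2 \mapsto \langle x_1, \xi_1\rangle \xi_1 \otimes x_2 / \|\xi_1\|^2$, or with the appropriate normalization matching $V\xi = (\xi_1,\xi_2,0)$) is bounded with norm $\|\xi_1\|$, which is finite, so $\mathbf{B}$ is globally bounded and no care is needed. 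I would go with the second, more explicit route, since $\xi_1\otimes I$ and $I\otimes\xi_2$ are indeed bounded rank-type operators once one reads $\xi_1\otimes I$ as the ampliation of the rank-one operator $\xi_1\langle\cdot,\xi_1\rangle$ suitably, and then the whole argument is a bounded-operator computation justified by density of $\{\eta(z)\}$.
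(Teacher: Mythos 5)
Your argument is correct and is essentially the one the paper carries out in the paragraphs preceding the statement: factor the Gram kernel of $\eta(z)=(T-z)^{-1}\xi$ through the triple $(\eta_1(z),\eta_2(z),\eta_1(z)\otimes\eta_2(z))$, invoke Kolmogorov to obtain the isometry $V$, compute $VT\eta(z)$ via $T\eta(z)=\xi+z\eta(z)$ together with $V\xi=(\xi_1,\xi_2,0)$, and finish by density of $\{\eta(z)\}$ in $H$ (rational cyclicity of $\xi$).

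Two small slips are worth flagging. First, the multiplicative identity you invoke should run through $1/E = 1 + N$, where $N(w,z)=\langle\eta(z),\eta(w)\rangle$, rather than through $E = 1 - M$; from $E=E_1E_2$ and $E=1-M$ one gets $1-M=(1-M_1)(1-M_2)$, not $1+M=(1+M_1)(1+M_2)$. The expansion you actually use, $N=N_1+N_2+N_1N_2$, is the right one; only the letter is off (a slip already present in the paper's own text). Second, the extended discussion of whether $\xi_1\otimes I$ is a bounded operator is a red herring created by misreading the third row $(0,\ -\xi_1\otimes I,\ T_1\otimes I)$ as $(-\xi_1\otimes I,\ 0,\ T_1\otimes I)$. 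In its correct $(3,2)$ position, $\xi_1\otimes I$ is the map $H_2\to H_1\overline{\otimes}H_2$, $y\mapsto \xi_1\otimes y$, which is a bounded rank-type ampliation with norm exactly $\|\xi_1\|$; likewise $I\otimes\xi_2: H_1\to H_1\overline{\otimes}H_2$ has norm $\|\xi_2\|$. So the block matrix is everywhere defined and bounded on $H_1\oplus H_2\oplus H_1\overline{\otimes}H_2$, and there is no need to restrict it to $\mathrm{ran}\,V$ or to reason about invariance of that range.
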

Towards finding the structure of the the finite central truncations along the $T$-cyclic subspaces generated by $\xi$ we note:

\begin{corollary} In the condition of the Theorem, assume $f(z)$ is an analytic function defined in a neighborhood of the compact set $K = K_1 \cup K_2$. Then Riesz-Dunford functional calculus reads:
\begin{equation}
 V f(T) =  \left( \begin{array}{ccc}
         f(T_1) & 0 & 0 \\
         0 & f(T_2) & 0 \\
         0 & A_{32} & f(T_1) \otimes I\\
         \end{array} \right ) V =  \left( \begin{array}{ccc}
         f(T_1) & 0 & 0 \\
         0 & f(T_2) & 0 \\
          A_{31} & 0  & I \otimes f(T_2)
         \end{array} \right ) V.
\end{equation}
\end{corollary}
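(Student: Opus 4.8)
The plan is to transport the intertwining identity of the preceding Theorem through the Riesz--Dunford holomorphic functional calculus. Write $\mathcal{T}$ for the block lower triangular operator on $H_1 \oplus H_2 \oplus H_1\overline{\otimes}H_2$ appearing there, so that $VT = \mathcal{T}V$. First I would observe, by induction on $k$, that $VT^{k} = \mathcal{T}^{k}V$, hence $V p(T) = p(\mathcal{T})V$ for every polynomial $p$. Next, for $\lambda$ in the complement of $K = K_1 \cup K_2$ the operator $\mathcal{T}-\lambda$ is block lower triangular with diagonal blocks $T_1-\lambda$, $T_2-\lambda$, $T_1\otimes I - \lambda$, each invertible because $\sigma(T_j) = K_j$ and $\sigma(T_1\otimes I) = \sigma(T_1)$ on the Hilbertian tensor product; a bounded block lower triangular operator with invertible diagonal blocks is invertible, and its inverse is again block lower triangular. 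In particular $\sigma(\mathcal{T}) \subseteq K = \sigma(T)$, and multiplying $V(T-\lambda) = (\mathcal{T}-\lambda)V$ on the left by $(\mathcal{T}-\lambda)^{-1}$ and on the right by $(T-\lambda)^{-1}$ gives the resolvent intertwining $V(\lambda - T)^{-1} = (\lambda - \mathcal{T})^{-1} V$ for all $\lambda \notin K$.

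Now fix a contour $\Gamma$ lying in the domain of analyticity of $f$ and surrounding $K$; it simultaneously surrounds $\sigma(T)$ and $\sigma(\mathcal{T})$. Integrating the resolvent intertwining against $\frac{1}{2\pi i}f(\lambda)\,d\lambda$ yields
\[
V f(T) \;=\; \frac{1}{2\pi i}\oint_{\Gamma} f(\lambda)\, V(\lambda - T)^{-1}\, d\lambda \;=\; \left( \frac{1}{2\pi i}\oint_{\Gamma} f(\lambda)\,(\lambda - \mathcal{T})^{-1}\, d\lambda \right) V \;=\; f(\mathcal{T})\, V .
\]
It then remains to evaluate $f(\mathcal{T})$ entrywise. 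A direct block inversion shows that $(\lambda - \mathcal{T})^{-1}$ equals
\[
\left( \begin{array}{ccc}
(\lambda - T_1)^{-1} & 0 & 0 \\
0 & (\lambda - T_2)^{-1} & 0 \\
0 & -(\lambda - T_1\otimes I)^{-1}(\xi_1 \otimes I)(\lambda - T_2)^{-1} & (\lambda - T_1\otimes I)^{-1}
\end{array} \right),
\]
and integrating each entry against $\frac{1}{2\pi i}f(\lambda)\,d\lambda$ over $\Gamma$ produces $f(T_1)$, $f(T_2)$ and $f(T_1)\otimes I$ on the diagonal (each contour encircling the corresponding spectrum), zeros in every slot above the diagonal and in the $(3,1)$ slot, and a bounded operator $A_{32}: H_2 \longrightarrow H_1\overline{\otimes}H_2$ in the $(3,2)$ slot. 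This is exactly the asserted block form of $Vf(T)$; the alternative representation, with $I\otimes T_2$, $I\otimes \xi_2$ and an entry $A_{31}$, follows verbatim from the second intertwining matrix recorded in the Theorem.

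The computational steps --- the block triangular inversion and the termwise contour integration --- are routine; the only point that needs a moment's care is the spectral inclusion $\sigma(\mathcal{T}) \subseteq K$, since it is what licenses the use of a single contour for both functional calculi and, a fortiori, the very existence of $f(\mathcal{T})$. This in turn reduces to the two elementary facts used above: invertibility of block triangular operators with invertible diagonal blocks, and $\sigma(A\otimes I) = \sigma(A)$ on a Hilbertian tensor product.
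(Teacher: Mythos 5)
Your argument is correct and takes the approach the paper implicitly intends: transport the intertwining $VT=\mathcal{T}V$ from the Theorem to resolvents, then integrate against $\frac{1}{2\pi i}f(\lambda)\,d\lambda$ and read off the block entries of $f(\mathcal{T})$. The paper records no proof for this corollary at all, merely stating the explicit formulas for $A_{32}$ and $A_{31}$ and verifying the resolvent special case, so your write-up is a fuller version of the same line of reasoning. One small observation: in the $(3,2)$ slot you obtain $-\frac{1}{2\pi i}\oint f(\lambda)(\lambda-T_1\otimes I)^{-1}(\xi_1\otimes I)(\lambda-T_2)^{-1}\,d\lambda$; after using $(\xi_1\otimes I)(\lambda-T_2)^{-1}=(\lambda-I\otimes T_2)^{-1}(\xi_1\otimes I)$ and the fact that $T_1\otimes I$ and $I\otimes T_2$ commute, the resolvent identity collapses this to $-f[T_1\otimes I,\,I\otimes T_2]\,(\xi_1\otimes I)$, which is exactly the formula for $A_{32}$ given right after the corollary — worth making explicit, since it closes the loop with the paper's notation. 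You are also right to flag $\sigma(\mathcal{T})\subseteq K$ as the one non-routine point: because $V$ is an isometry but not onto, this inclusion is not automatic from the intertwining and must come from the block-triangular structure as you argue.
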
 

 The finite difference convention
$ f[u,v] = \frac{f(u) - f(v)}{u-v}$ yields:
$$ A_{32} = - f[T_1\otimes I, I \otimes T_2] \xi_1 \otimes I,$$
respectively
$$ A_{31} = - f[T_1\otimes I, I \otimes T_2] I \otimes \xi_2.$$
A more symmetric expression is obtained after evaluating such a function on the cyclic vector:
\begin{equation}
 V f(T) \xi = ( f(T_1)\xi_1, f(T_2)\xi_2, - f[T_1\otimes I, I \otimes T_2]\xi_1 \otimes \xi_2).
 \end{equation}
As a verification, the resolvent identity
$$ \frac{(u-z)^{-1} -(v-z)^{-1} }{u-v} = - (u-z)^{-1}(v-z)^{-1}$$
implies the defining relation of the isometry $V$:
$$V (T-z)^{-1}\xi = ((T_1-z)^{-1}\xi_1, (T_2-z)^{-1}\xi_2, (T_1-z)^{-1}\xi_1 \otimes (T_2-z)^{-1}\xi_2), \ \ z \notin K_1 \cup K_2.$$

\section{Packing quadrature domains}

Studying unions of quadrature domains is simplified by the rationality of the associated exponential transform and four argument kernel $L$.
Speficially, let $\Omega$ be a (non necessarily connected) quadrature domain with moment data finitely encoded by the exponential transform: 
$$ Q(w, z) = P(w) \overline{P(z)} E(w,z), \ \ w,z \notin \overline{\Omega},\ \ z,w \notin \overline{\Omega}.$$
 We recall that $P(w)$ is the minimal polynomial vanishing at the quadrature nodes and $Q(w,z)$ is a hermitian kernel, polynomial (analytic/anti-analytic) in both variables.
 Consequently, the $L$ kernel turns out to be rational, too:
$$ P(v) \overline{P(z)} L(w,z; u,v) = \frac{ Q(v,z) Q(w,u) - Q(w,z) Q(v,u)}{(v-w) (\overline{u}-\overline{z}) Q(w,u)}, \ \ w,z,u,v \notin  \overline{\Omega}.$$

Taking into account the two diagonal block decomposition of the corresponding matrix model we obtain the following eigenfunction type decomposition
derived from (\ref{double-resolvent}).
 
 \begin{proposition}
 Let $\Omega \subset \C$ be a quadrature domain and let $T$ denote the associated hyponormal operator with rank-one self-commutator and matrix decomposition
 (\ref{weighted-shift}). The corresponding $L$ kernel is
 \begin{equation}
 L(w,z;u,v) = \sum_{k=0}^\infty \langle f_k(w,z), f_k(u,v)\rangle, \ \ |z|,|w|,|u|,|v| > \| T\|,
 \end{equation}
 where the functions $f_k$ are recursively defined by (\ref{direct-summand}).
 \end{proposition}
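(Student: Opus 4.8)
The plan is to unwind the definitions established in Sections 2 and 3 and show that the claimed series is exactly the factorization of $L$ through the underlying Hilbert space specialized to the block-matrix model \eqref{weighted-shift}. First I would invoke the factorization \eqref{factor-L}, which expresses $L(w,z;u,v)$ as the single inner product $\langle (T-v)^{-1}(T^\ast-\overline u)^{-1}(T-w)^{-1}(T^\ast-\overline z)^{-1}\xi,\xi\rangle$, valid for $|z|,|w|,|u|,|v|>\|T\|$. The key move is to read this as $\langle (T-w)^{-1}(T^\ast-\overline z)^{-1}\xi,\ (T-u)^{-1}(T^\ast-\overline v)^{-1}\xi\rangle$; indeed, taking adjoints, $\big((T-v)^{-1}(T^\ast-\overline u)^{-1}\big)^\ast = (T-u)^{-1}(T^\ast-\overline v)^{-1}$, so that $L(w,z;u,v) = \langle \rho(w,z),\rho(u,v)\rangle$ with $\rho(w,z) := (T-w)^{-1}(T^\ast-\overline z)^{-1}\xi$.

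Next I would plug in the orthogonal decomposition $H = H_0 \oplus (H_1\ominus H_0)\oplus(H_2\ominus H_1)\oplus\cdots$ and the resulting expansion \eqref{double-resolvent}, namely $\rho(w,z) = (T-w)^{-1}(T^\ast-\overline z)^{-1}\xi = \sum_{k=0}^\infty (-1)^k F_k(w,z)$, where $F_k(w,z) \in H_k\ominus H_{k-1}$ and the series converges absolutely for $|w|,|z|>\|T\|$. Since the summands $F_k(w,z)$ lie in mutually orthogonal components of $H$, the inner product collapses to a diagonal sum:
\begin{equation*}
L(w,z;u,v) = \langle \rho(w,z),\rho(u,v)\rangle = \sum_{k=0}^\infty (-1)^k(-1)^k \langle F_k(w,z), F_k(u,v)\rangle = \sum_{k=0}^\infty \langle F_k(w,z), F_k(u,v)\rangle,
\end{equation*}
the cross terms vanishing by orthogonality of distinct $H_k\ominus H_{k-1}$. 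Finally I would identify $F_k$ with the functions $f_k$ of \eqref{direct-summand}: tracing back through the text between \eqref{weighted-shift} and \eqref{double-resolvent}, the component of $(T-w)^{-1}(T^\ast-\overline z)^{-1}\xi$ in $H_k\ominus H_{k-1}$, under the identification of that summand with $\C^d$, is precisely $f_k(w,z) = (D_k-w)^{-1}A_{k-1}(D_{k-1}-w)^{-1}\cdots(D_0-w)^{-1}(D_0^\ast-\overline z)^{-1}\xi$, satisfying the recursion $f_{k+1} = (D_{k+1}-w)^{-1}A_k f_k$, $f_0 = (D_0-w)^{-1}(D_0^\ast-\overline z)^{-1}\xi$. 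Absolute convergence of $\sum\|f_k(w,z)\|\,\|f_k(u,v)\|$ follows from that of \eqref{double-resolvent} together with Cauchy--Schwarz, so the rearrangement into a diagonal sum is legitimate.

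The only genuinely delicate point is the bookkeeping in the identification $F_k = \pm f_k$ and the tracking of the signs $(-1)^k$, which must cancel in pairs; this is routine given the explicit Neumann-series expansion of $(T-w)^{-1} = (I+(D-w)^{-1}SA)^{-1}(D-w)^{-1}$ written out just before \eqref{direct-summand}, but it is the step where one must be careful to keep the shift operator $S$ and the block indices aligned. Everything else is a direct consequence of orthogonality of the filtration summands and the absolute convergence already recorded in the two-diagonal block-matrix model subsection.
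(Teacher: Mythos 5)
Your proof is correct and follows exactly the route the paper intends: starting from the factorization $L(w,z;u,v)=\langle\rho(w,z),\rho(u,v)\rangle$ obtained from \eqref{factor-L} by moving $(T-v)^{-1}(T^\ast-\overline u)^{-1}$ across the inner product, then expanding $\rho(w,z)$ via \eqref{double-resolvent} into mutually orthogonal pieces $(-1)^kF_k(w,z)\in H_k\ominus H_{k-1}$, so that the cross terms vanish and the signs cancel, and finally identifying $F_k$ with $f_k$ under the isometric identification $H_k\ominus H_{k-1}\cong\C^d$. The paper leaves this proposition essentially without proof because it is meant to be a direct reading of \eqref{double-resolvent} and \eqref{factor-L}; you have supplied the routine but genuine bookkeeping, including the adjoint computation and the absolute convergence needed to sum diagonally, which is precisely the argument the authors had in mind.
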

 
 \subsection{Non overlapping criteria}
 
Theorem \ref{archipelago} specializes to the following certificate.

\begin{theorem} Let $\Omega_j, j \in J,$ be a finite collection of bounded quadrature domains contained in the complex plane. Denote by $Q_j(w,z)$ the corresponding defining polynomials, and let
$\Omega = \cup_{j \in J} \Omega_j,$ respectively \\
$Q(w,z) = \prod_{j \in J} Q_j(w,z), \ \ w,z \notin \overline{\Omega}$. 

The open set $\Omega$ is a quadrature domain if and only if there exists $R > \sup_{w \in \Omega} |w|$, such that the kernel
\begin{equation}\label{QD-kernel}
 K(w,z; u,v) =  \frac{ Q(v,z) Q(w,u) - Q(w,z) Q(v,u)}{(v-w) (\overline{u}-\overline{z}) Q(w,u)}
 \end{equation}
satisfies the positivity conditions (\ref{bounded}) for $|w|, |z|, |u|, |v| > R.$
\end{theorem}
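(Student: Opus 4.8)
The plan is to recognize the kernel $K$ in \eqref{QD-kernel} as the four argument kernel $L_g$ of a \emph{single} exponential transform, and then to read off the conclusion from Theorem \ref{free-infinity}. First I would write $P_j$ for the minimal monic polynomial vanishing at the quadrature nodes of $\Omega_j$, so that $Q_j(w,z)=P_j(w)\overline{P_j(z)}\,E_j(w,z)$ for $|w|,|z|$ large, where $E_j$ is the exponential transform of $\chi_{\Omega_j}$. Setting $P=\prod_{j\in J}P_j$ and $g:=\sum_{j\in J}\chi_{\Omega_j}$, the multiplicativity of the exponential transform under addition of weight functions gives $\prod_{j}E_j=E_g$, hence
\begin{equation*}
Q(w,z)=\prod_{j\in J}Q_j(w,z)=P(w)\overline{P(z)}\,E_g(w,z),\qquad |w|,|z|\gg 1 .
\end{equation*}
Substituting this into \eqref{QD-kernel}, the prefactors $P(v)\overline{P(z)}$ and $P(w)\overline{P(u)}$ cancel between numerator and denominator, leaving exactly the kernel $L=L_g$ of \eqref{L-kernel}. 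Since the zeros of $P$ lie in $\Omega$ and $E_g$ is holomorphic, antiholomorphic and nonvanishing off $\overline\Omega$, this $L_g$ is rational and well defined on $\Delta_R^4$ for every $R>\sup_{w\in\Omega}|w|$, which is exactly the region where $K$ in the statement lives.

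With this identification the theorem becomes a direct corollary of Theorem \ref{free-infinity}: applying the equivalence of its items 1) and 2)--3) to $E=E_g$, the positivity conditions \eqref{bounded} hold on some $\Delta_R^4$ with an appropriate constant $C$ if and only if $g$ takes values in $[0,1]$ almost everywhere. Now $g=\sum_j\chi_{\Omega_j}$ is nonnegative and integer valued, so $g\le 1$ a.e.\ is equivalent to the $\Omega_j$ being pairwise disjoint up to area-null sets, i.e.\ to $\chi_\Omega=g$ a.e. In that case $E_g=E_{\chi_\Omega}$ is rational near infinity with polarized polynomial denominator $P(w)\overline{P(z)}$, $\deg P>0$, so by the exponential transform characterization of quadrature domains recalled in Section 2 the set $\Omega$ is a quadrature domain, with nodes at the zeros of $P$ and defining polynomial $Q$. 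Running the chain backwards yields the converse, so I would organize the write-up as: (i) the algebraic reduction $K=L_g$; (ii) the citation of Theorem \ref{free-infinity}; (iii) the elementary observation that a nonnegative integer valued function is $\le 1$ a.e.\ iff it is a characteristic function; (iv) the quadrature domain dictionary of Section 2.

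The step I expect to need the most care is pinning down what ``$\Omega$ is a quadrature domain'' should mean in the statement. The clean equivalence above is with ``$\Omega$ is a quadrature domain whose exponential transform has denominator $P(w)\overline{P(z)}$'', equivalently ``$\chi_\Omega=\sum_{j}\chi_{\Omega_j}$ almost everywhere'', equivalently ``the islands $\Omega_j$ do not overlap'' --- and this is really the content, since, for instance, if one allowed $\Omega_2\subseteq\Omega_1$ the union $\Omega=\Omega_1$ would still be a quadrature domain while $g$ would take the value $2$ on $\Omega_2$ and \eqref{bounded} would fail. Once this reading is fixed, nothing new beyond Theorem \ref{free-infinity} is required; the two residual technical points --- the cancellation of the polynomial prefactors in \eqref{QD-kernel}, and passing from the germ-at-infinity formulation of \eqref{bounded} to its validity on an explicit polydisk $\Delta_R^4$ using the rationality of $K$ --- are handled exactly as in the proof of Theorem \ref{archipelago} and in \cite{Putinar-1998}.
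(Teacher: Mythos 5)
Your overall reduction---identify the product $\prod_j E_j$ as $E_g$ for $g=\sum_j\chi_{\Omega_j}$, invoke Theorem~\ref{free-infinity} to characterize ``$g\le 1$ a.e.'', and translate that into ``no overlapping'' and ``$\Omega$ is a quadrature domain with denominator $P(w)\overline{P(z)}$''---matches the paper's intent, and your remark about the correct reading of ``$\Omega$ is a quadrature domain'' is a pertinent clarification. However, there is a concrete computational error at the pivot of the argument. You claim that substituting $Q_j(w,z)=P_j(w)\overline{P_j(z)}E_j(w,z)$ into \eqref{QD-kernel} cancels \emph{both} $P(w)\overline{P(u)}$ and $P(v)\overline{P(z)}$, leaving $K=L_g$. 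Only the first pair cancels: the numerator carries the full factor $P(v)\overline{P(z)}P(w)\overline{P(u)}$, while the denominator $(v-w)(\overline u-\overline z)Q(w,u)$ carries only $P(w)\overline{P(u)}$, so in fact
\[
K(w,z;u,v)=P(v)\,\overline{P(z)}\,L_g(w,z;u,v),
\]
which is exactly the identity written in the paper just before the theorem. Your proof therefore needs one more step: show that the extra factor $P(v)\overline{P(z)}$ does not affect the positivity constraints of \eqref{bounded}. This is true, but it must be argued --- e.g.\ observe that evaluating at points $(w_k,z_k)$ gives $K(w_k,z_k;w_\ell,z_\ell)=\overline{P(z_k)}\,L_g(w_k,z_k;w_\ell,z_\ell)\,P(z_\ell)$, so $K$ and $L_g$ differ by the congruence $\lambda_k\mapsto \overline{P(z_k)}\lambda_k$, which is invertible for $z_k\in\Delta_R$ since the zeros of $P$ lie in $\overline\Omega$; the same diagonal congruence commutes with the finite-difference operations at infinity in \eqref{bounded}, so both chains of inequalities are preserved. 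This is precisely the content of the paper's one-line proof (``multiplying $L$ by $P(v)\overline{P(z)}$ does not alter the proof and conclusion''), and without it your write-up asserts a false identity and then silently uses the conclusion that the true identity would have required. Aside from this, your route is the same as the paper's.
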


For the proof we simply remark that multiplying the kernel $L$ by the factor  $P(v) \overline{P(z)}$ does not alter the proof and conclusion of Theorem \ref{archipelago}.

A second characterization of quadrature domains non-overlapping can be derived from the structure of the block matrix staircase decomposition of the operator $T$. 

\begin{theorem}\label{matrix-chain}
Let $\Omega_j, j \in J,$ be a finite collection of bounded quadrature domains contained in the complex plane. Denote by $P_j(z)$ the corresponding minimal polynomials vanishing at the quadrature nodes, and let $P = \prod_{j \in J} P_j.$
Then $\Omega$ is a quadrature domain if and only if:

a) The joint exponential transform $E(w,z) = \prod_{j \in J} E_j(w,z)$  admits a sums of squares decomposition of the form:
$$ P(w) \overline{P(z)} (1-E(w,z)) = \sum_{k=0}^{d-1} Q_k(w) \overline{Q_k(z)}, \ \ |w|, |z| > \sup_{z \in \Omega} |z|,$$
where $d = \deg P$ and $Q_k$ are polynomials of the exact degree $k$: $\deg Q_k = k, \ \ 0 \leq k < d,$

b). The recursively generated $d \times d$ matrices (\ref{recurrence}) satisfy: $A_k > 0, \ k \geq 0,$ and  $\sup_k \| D_k \| < \infty.$
\end{theorem}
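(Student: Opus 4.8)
The plan is to reduce the statement to a combination of the general criterion of Theorem \ref{archipelago} (equivalently, the exponential-transform characterization of quadrature domains recalled in the Preliminaries) and the block-matrix model of Section 2.3. First I would observe that, by the characterization of quadrature domains in terms of the exponential transform, $\Omega=\cup_{j\in J}\Omega_j$ (as an open set with the inherited area mass) is a quadrature domain if and only if $E=\prod_j E_j$ is rational near infinity with polarized denominator $P(w)\overline{P(z)}$ for a polynomial $P$ of some positive degree $d$, and the numerator $Q(w,z)=P(w)\overline{P(z)}E(w,z)$ is a hermitian polynomial kernel. Since each $E_j$ already has this form with denominator $P_j(w)\overline{P_j(z)}$, the product automatically has denominator dividing $P(w)\overline{P(z)}$ with $P=\prod_j P_j$; the genuine content is that no cancellation with the numerator occurs, i.e. that $\deg P = d$ is exactly $\sum_j\deg P_j$ and that $1-E$ is carried by the class of a $[0,1]$-valued function (namely $\chi_\Omega$). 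I would then reinterpret ``$Q(w,z)$ is hermitian polynomial'' as the sums-of-squares statement in a): indeed, by the reverse Cauchy--Schwarz property at infinity the kernel $1-E(w,z)$ is positive definite, so $P(w)\overline{P(z)}(1-E(w,z))$ is a positive-definite polynomial kernel on $\Delta_R\times\Delta_R$; a positive-definite kernel that is polynomial of degree $d-1$ in $w$ and $\overline{z}$ admits a finite Kolmogorov factorization $\sum_{k=0}^{d-1}Q_k(w)\overline{Q_k(z)}$, and the exact-degree condition $\deg Q_k=k$ encodes precisely that the associated $T^\ast$-cyclic space $H_0$ has dimension exactly $d$ (no degeneration), which is the defining finiteness property ${\rm dim}\,{\rm span}\{T^{\ast k}\xi\}=d<\infty$ of a quadrature domain.

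The second half, condition b), is the translation of the remaining requirement that the data $(D_k,A_k)$ assembled from $E$ actually define a \emph{bounded} hyponormal operator $T$ with principal function a characteristic function, rather than merely a formal matrix. Here I would invoke the recurrence (\ref{recurrence}): starting from $A_{-1}=\xi\langle\cdot,\xi\rangle$ (determined by the leading coefficient $\gamma={\rm Area}(\Omega)/\pi$ of $1-E$, hence by the $Q_k$) and $D_0$ (the $d\times d$ companion-type matrix whose minimal polynomial is $P$, read off from the filtration generated by the $Q_k$), the equations $[D_k^\ast,D_k]+A_k^2=A_{k-1}^2$ and $A_kD_{k+1}=D_kA_k$ recursively and uniquely produce all $D_k,A_k$ once one fixes the orthonormal-basis normalization $A_k>0$. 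The trace identity ${\rm trace}\,A_k^2=\|\xi\|^2$ is then automatic, so the off-diagonal entries are uniformly bounded; as noted in Section 2.3, boundedness of $T=D+SA$ is then \emph{equivalent} to $\sup_k\|D_k\|<\infty$, and once $T$ is bounded, irreducible and satisfies $[T^\ast,T]=\xi\otimes\xi$ with finite-dimensional $T^\ast$-cyclic subspace, its principal function is forced to be $\chi_\Omega$ and $\Omega$ is a quadrature domain. Conversely, if $\Omega$ is a quadrature domain, Section 2.3 gives exactly this bounded staircase model, so a) and b) hold.

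The forward direction is essentially bookkeeping; the main obstacle is the \emph{converse} implication, specifically showing that a) and b) together suffice, because a) only records the formal germ of $1-E$ at infinity and does not by itself guarantee that the reconstructed $T$ has $\chi_\Omega$ (a bona fide $[0,1]$-valued principal function) rather than some other principal function consistent with the same moments. The resolution is to note that conditions a) and b) are designed to be the componentwise unpacking of the positivity conditions (\ref{bounded}) of Theorem \ref{free-infinity}/\ref{archipelago}: a) encodes the commutator identity $[T^\ast,T]=\xi\otimes\xi$ together with the finiteness of $H_0$ (equivalently the polarized-denominator rationality), while b) encodes the boundedness half of (\ref{bounded}). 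I would therefore make the argument by checking that a) and b) imply the Gram matrix $\langle T^kT^{\ast\ell}\xi,T^mT^{\ast n}\xi\rangle$ built from the recurrence is the moment matrix of a bounded hyponormal operator whose principal function, being the characteristic function of its spectrum (by the rational-cyclicity theorem cited from \cite{Putinar-1988b}), must equal $\chi_\Omega$; this places us back inside the equivalence of Theorem \ref{archipelago}, completing the proof. I expect the one genuinely delicate point to be the exact-degree clause $\deg Q_k=k$ in a): one must verify that if any $\deg Q_k<k$ then the $T^\ast$-cyclic dimension drops below $d$, forcing a nontrivial common factor of $P$ and $Q$ and hence a \emph{smaller} quadrature representation — so the clause is exactly what rules out the spurious, over-determined factorizations.
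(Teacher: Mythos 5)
Your overall blueprint matches the paper's (very brief) proof: condition a) packages the finite Kolmogorov/Cholesky factorization of the positive kernel $1-E$ over a $d$-dimensional space, so that there is a $d\times d$ matrix $D_0^\ast$ with cyclic vector $\xi$ realizing $1-E(w,z)=\langle (D_0^\ast-\bar z)^{-1}\xi,(D_0^\ast-\bar w)^{-1}\xi\rangle$ near infinity (the paper simply cites Proposition~4.2 of \cite{Gustafsson-Putinar-LNM} for this); condition b) says that the recurrence (\ref{recurrence}), seeded by $(D_0,\xi)$, runs indefinitely and yields a bounded block operator $T=D+SA$ with $[T^\ast,T]=\xi\otimes\xi$; finite-dimensionality of $H_0$ then gives the quadrature property. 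So far so good.

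The gap is in your closing step of the converse. You invoke the rational-cyclicity theorem of \cite{Putinar-1988b} to conclude that the principal function of the reconstructed $T$ ``being the characteristic function of its spectrum, must equal $\chi_\Omega$.'' That theorem has the implication the other way around: it \emph{assumes} the principal function equals $\chi_{\sigma(T)}$ and \emph{concludes} rational cyclicity of $\xi$; it cannot be used to determine the principal function of an operator built from moment data. The mechanism the paper actually relies on (``consequence of the exponential transform product structure'') is the bijectivity of the Pincus correspondence between $[0,1]$-valued $g$ and irreducible $(T,\xi)$ with rank-one self-commutator, combined with the multiplicativity $E_{g_1+g_2}=E_{g_1}E_{g_2}$. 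Since the $T$ produced by a) and b) has resolvent correlation $1-E$ with $E=\prod_j E_j$, its principal function is forced, by injectivity of $g\mapsto E_g$, to equal $\sum_j\chi_{\Omega_j}$; and since a principal function necessarily lies in $[0,1]$ a.e., the $\Omega_j$ must be pairwise area-disjoint, whence $g_T=\chi_\Omega$ and $\Omega$ is a quadrature domain. Your worry about ``some other principal function consistent with the same moments'' is thus a false alarm, but it should be dispelled by the injectivity of $g\mapsto E_g$ rather than by an inverted reading of rational cyclicity.
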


\begin{proof} Condition a) simply means that there exists a $d\times d$ matrix $D_0^\ast$ with cyclic vector $\xi$, satisfying
$$ 1- E(w,z) = \langle (D^\ast - \overline{z})^{-1} \xi , (D^\ast - \overline{w})^{-1} \xi \rangle, \ \  |w|, |z| > \sup_{z \in \Omega} |z|.$$
Cf. Proposition 4.2 in \cite{Gustafsson-Putinar-LNM}.

Condition b) means that the recurrence scheme (\ref{recurrence}) runs to infinity and produces a linear bounded operator possessing the block decomposition
(\ref{weighted-shift}). Then $T$ is a hyponormal operator with rank-one self-commutator with principal function $g_T = \chi_\Omega$ (consequence of the exponential transform
product structure). In addition the linear span ${\rm span} \{ T^{\ast^k} \xi, \ k \geq 0 \} = H_0$ is finite dimensional, hence $\Omega$ is a quadrature domain.
\end{proof}

For the kernel $K$ attached to a quadrature domain $\Omega$ one can write 
$$ K(w,z; u,v) =  \frac{ P(w,z; u,v)}{Q(w,u)}, $$
where $P$ is a polynomial function of degree less than or equal to $d-1$ in each variable. We derive from here a finite step Pad\'e type reconstruction algorithm for the entire kernel $K$, and implicitly $L$.
In view of the block matrix staircase decomposition of the operator $T$ we adapt the Neumann series summands as follows:
$$ g_n(w,z) = \overline{p(z)} f_n(w,z), \ \ n \geq 0,$$
so that the recurrence relations (\ref{direct-summand}) become:
$$  g_{n+1}(w,z) = (D_{n+1}-w)^{-1} A_n g_n(w,z), \ n\geq 0,$$
$$ g_0(w,z) = (D_0-w)^{-1}  \overline{p(z)} (D_0^\ast - \overline{z})^{-1} \xi.$$
The minimal polynomial of the matrix $D_0$ is $p$, therefore $ \overline{p(z)} (D_0^\ast - \overline{z})^{-1} $ is an $H_0$-valued polynomial function of degree less than or equal to $d-1$.
This property extends then to all rational functions $g_n$, written now in a Taylor expansion at infinity:
$$g_n(w,z) = \frac{ c_{n+1}(z)}{w^{n+1}} +  \frac{ c_{n+2}(z)}{w^{n+2}} + \ldots, \ \ n \geq 0.$$
where $z \mapsto c_\ell(z), \ \ell \geq n+1,$ are antianalytic polynomials of degree at most $d-1$.
The inherited Neumann series expansion is still convergent uniformly on compact subsets of the complement of the disk of radius $\| T \|$:
$$ K(w,z; u,v) =  \sum_{k=0}^\infty \langle g_k(w,z), g_k(u,v)\rangle.$$
Consequently, the following identity of Laurent series holds:
$$ P(w,z; u,v) = Q(w,u) K(w,z; u,v) =  \sum_{k=0}^\infty Q(w,u) \langle g_k(w,z), g_k(u,v)\rangle.$$
Chasing the negative degrees with respect to the variables $w$ and $u$ we reach the following conclusion.

\begin{proposition} Let $\Omega \subset \C$ be a bounded quadrature domain of order $d \geq 1$, with associated four variables kernel $K = \frac{P}{Q}$ (\ref{QD-kernel}). The polynomial numerator $P$ of 
$K$ is the non-negative part of the Laurent series 
$$ P(w,z; u,v) =  \sum_{k=0}^{d-1} Q(w,u) \langle g_k(w,z), g_k(u,v)\rangle + O(\frac{1}{w}, \frac{1}{\overline{u}}).$$
\end{proposition}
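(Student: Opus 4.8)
The plan is to leverage the convergent Neumann series expansion $K(w,z;u,v) = \sum_{k=0}^\infty \langle g_k(w,z), g_k(u,v)\rangle$, which holds uniformly on compact subsets of $\{|w|,|z|,|u|,|v| > \|T\|\}$, and then carefully track the orders of vanishing in the variables $w$ and $u$. The starting observation is that $g_n(w,z)$, as a function of $w$, is $O(w^{-(n+1)})$ at infinity — explicitly $g_n(w,z) = c_{n+1}(z) w^{-(n+1)} + c_{n+2}(z) w^{-(n+2)} + \cdots$ — and symmetrically in $u$ for $g_n(u,v)$. Therefore the term $\langle g_k(w,z), g_k(u,v)\rangle$ contributes only monomials $w^{-a}\overline{u}^{-b}$ with $a, b \geq k+1$.

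**First I would** multiply the Neumann series by $Q(w,u)$, which is a polynomial of degree $\leq d$ in $w$ and in $\overline{u}$ (since $Q = R(\cdot, \overline{\cdot})$ is the polarized defining polynomial and $P$ has degree $d$, so $Q = P(w)\overline{P(u)} E$ with the leading behavior controlled by $\deg P = d$). Multiplying a Laurent tail $w^{-a}$ by a polynomial of degree $\leq d$ in $w$ shifts the lowest power up by at most $d$: the result has no nonnegative powers of $w$ once $a \geq d+1$, i.e. once $k+1 \geq d+1$, i.e. $k \geq d$. The same holds for the $\overline{u}$ variable. Hence for every $k \geq d$ the product $Q(w,u)\langle g_k(w,z), g_k(u,v)\rangle$ is $O(w^{-1}, \overline{u}^{-1})$ — it has strictly negative order in both $w$ and $\overline{u}$.

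**Consequently**, splitting the sum at $k = d$,
\begin{equation}
P(w,z;u,v) = Q(w,u)K(w,z;u,v) = \sum_{k=0}^{d-1} Q(w,u)\langle g_k(w,z), g_k(u,v)\rangle + \sum_{k=d}^\infty Q(w,u)\langle g_k(w,z), g_k(u,v)\rangle,
\end{equation}
and the second (tail) sum lies entirely in $O(w^{-1}, \overline{u}^{-1})$ by the order count above, provided this tail converges to an analytic function near infinity — which it does, being a subseries of the absolutely convergent series $(\ref{double-resolvent})$. Since $P(w,z;u,v)$ is a genuine polynomial (degree $\leq d-1$ in each variable, as recorded just before the Proposition), it equals its own nonnegative part in $w$ and $\overline{u}$; extracting that nonnegative part from both sides kills the tail and leaves exactly the asserted formula.

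**The main obstacle** I anticipate is the bookkeeping of degrees: one must verify that $Q(w,u)$ really has degree at most $d$ in each of $w$ and $\overline{u}$ (which follows from $Q(w,z) = P(w)\overline{P(z)}E(w,z)$ together with $E(w,z) \to 1$ at infinity and $\deg P = d$), and that the leading $w^{-(n+1)}$ behavior of $g_n$ is sharp enough that the cutoff at $k=d$ is correct rather than off by one. A secondary subtlety is justifying that one may interchange the polynomial multiplication $Q(w,u)\cdot$ with the infinite sum and extract nonnegative parts termwise; this is routine given uniform convergence on the relevant compact annular regions, but it must be stated. Once these order estimates are pinned down, the conclusion is immediate.
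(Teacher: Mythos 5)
Your proposal is correct and matches the paper's approach: the paper compresses exactly this argument into the single phrase ``chasing the negative degrees with respect to the variables $w$ and $u$,'' and your proof supplies the details. The key points you identify — that $g_k(w,z) = O(w^{-(k+1)})$, that $Q(w,u)$ has degree exactly $d$ in $w$ and in $\overline{u}$, so $Q(w,u)\langle g_k(w,z), g_k(u,v)\rangle$ lies in $O(w^{-1},\overline{u}^{-1})$ for $k\geq d$, and that $P$ being a genuine polynomial of degree $\leq d-1$ in each variable equals its own non-negative part — are precisely the degree bookkeeping the paper leaves implicit.
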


\subsection{Resolvent realization of the four argument kernel}

We adapt below the ubiquitous ``lurking isometry technique" appearing in bounded analytic interpolation and control theory, \cite{Agler-McCarthy-Young}.

The familiar polarization formula implies that every hermitian monomial in $d$ complex variables can be written as a difference of two hermitian squares:
$$ 2[\tau^\alpha \overline{\sigma}^\beta + \sigma^\beta \overline{\tau}^\alpha] = |\tau^\alpha + \sigma^\beta|^2 - |\tau^\alpha - \sigma^\beta|^2.$$
Consequently we can decompose any hermitian polynomial $P$ as a difference of sums of hermitian squares:
$$ P(\tau,\overline{\sigma}) = \langle p_1(\tau), p_1(\sigma) \rangle - \langle p_2(\tau), p_2(\sigma)\rangle.$$
Above $p_j : \C^d  \longrightarrow K_j,$ are polynomial maps with values in finite dimensional Hilbert spaces $K_j,  \ j = 1,2,$.

\begin{proposition} Let $U \subset \C^d$ be a connected open set, and let $L : U \times U^\ast \longrightarrow \C$ be a hermitian kernel.
Assume that $L$ is rational of the following form:
$$ L(\tau, \overline{\sigma}) = \frac{ \langle p_1(\tau), p_1(\sigma) \rangle - \langle p_2(\tau), p_2(\sigma)\rangle}{q(\tau)\overline{q(\sigma)} - \langle r(\tau), r(\sigma)\rangle},$$
where $P(\tau,\overline{\sigma})$ is a hermitian polynomial, $q(\tau)$ is a polynomial, while $r(\tau) \in K_3$  is a polynomial map with values in a finite dimensional Hilbert space $K_3$,
such that $|q(\tau)| > \| r(\tau)\|, \tau \in U.$

The kernel $L$ is positive semi-definite on $U \times U^\ast$ if and only if there exists an auxiliary Hilbert space $K$ and a linear isometric transform:
\begin{equation}\label{block}
 \left( \begin{array}{cc}
              A & B\\
               \\
              C & D
              \end{array} \right) : \left(\begin{array}{c}
              K_3 \otimes K \\
              \oplus\\
              K_1 \end{array} \right) \longrightarrow \left( \begin{array}{c}
              K \\
              \oplus\\
              K_2 \end{array} \right)  
\end{equation}
with the property
\begin{equation}
 L(\tau, \overline{\sigma}) = 
 $$ $$ \langle [q(\tau)I - A (r(\tau) \otimes I)]^{-1} B p_1(\tau), [q(\sigma)I - A (r(\sigma) \otimes I)]^{-1} B p_1(\sigma)\rangle , \ \ \tau, \sigma \in U.
\end{equation}
Moreover, 
$$ p_2(\tau) = C [ r(\tau) \otimes (q(\tau)I - A (r(\tau) \otimes I))^{-1} B p_1(\tau)] + D p_1(\tau), \ \tau \in U.$$
\end{proposition}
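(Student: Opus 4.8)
The plan is to adapt the standard lurking-isometry argument to this rational kernel. First I would observe that positive semi-definiteness of $L$ on $U \times U^\ast$ is equivalent, via Kolmogorov factorization, to the existence of a Hilbert space $H$ and an analytic map $h : U \longrightarrow H$ with $L(\tau,\overline{\sigma}) = \langle h(\tau), h(\sigma)\rangle$, and we may assume $H = \overline{\spn}\{h(\tau) : \tau \in U\}$. Clearing the denominator in the hypothesis, positivity of $L$ is equivalent to the identity
\[
 q(\tau)\overline{q(\sigma)}\langle h(\tau), h(\sigma)\rangle + \langle p_2(\tau), p_2(\sigma)\rangle = \langle r(\tau), r(\sigma)\rangle \langle h(\tau), h(\sigma)\rangle + \langle p_1(\tau), p_1(\sigma)\rangle,
\]
which one rewrites as an equality of Gram kernels:
\[
 \langle q(\tau) h(\tau) , q(\sigma) h(\sigma)\rangle + \langle p_2(\tau), p_2(\sigma)\rangle = \langle r(\tau)\otimes h(\tau), r(\sigma)\otimes h(\sigma)\rangle + \langle p_1(\tau), p_1(\sigma)\rangle,
\]
where $r(\tau)\otimes h(\tau) \in K_3 \overline{\otimes} H$. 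Here I would set $K = H$.

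The equality of these two Gram kernels means precisely that the map sending $r(\tau)\otimes h(\tau) \oplus p_1(\tau)$ to $q(\tau) h(\tau) \oplus p_2(\tau)$ extends to a well-defined linear isometry from $\overline{\spn}\{r(\tau)\otimes h(\tau)\oplus p_1(\tau)\} \subseteq (K_3\overline{\otimes} K)\oplus K_1$ onto $\overline{\spn}\{q(\tau) h(\tau)\oplus p_2(\tau)\} \subseteq K\oplus K_2$. I would then extend this partial isometry to a full isometry of $(K_3\overline{\otimes}K)\oplus K_1$ into $K\oplus K_2$ (enlarging $K_2$ with an infinite-dimensional summand if necessary, or noting one can always pad the target; the paper's statement allows $K$ to be any auxiliary Hilbert space, so dimension bookkeeping is not an obstacle). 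Writing this isometry in block form as $\begin{pmatrix} A & B \\ C & D\end{pmatrix}$ as in \eqref{block}, the defining relation unpacks into the two vector identities
\[
 A(r(\tau)\otimes h(\tau)) + B p_1(\tau) = q(\tau) h(\tau), \qquad C(r(\tau)\otimes h(\tau)) + D p_1(\tau) = p_2(\tau).
\]

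The first identity, after using the associativity $A(r(\tau)\otimes h(\tau)) = A(r(\tau)\otimes I)h(\tau)$, rearranges to $[q(\tau)I - A(r(\tau)\otimes I)] h(\tau) = B p_1(\tau)$; since $|q(\tau)| > \|r(\tau)\|$ and $\|A(r(\tau)\otimes I)\|\le \|r(\tau)\|$ by the isometry (hence contractivity) of the block matrix, the operator $q(\tau)I - A(r(\tau)\otimes I)$ is invertible on $U$, giving $h(\tau) = [q(\tau)I - A(r(\tau)\otimes I)]^{-1} B p_1(\tau)$. Substituting back into $L(\tau,\overline\sigma) = \langle h(\tau), h(\sigma)\rangle$ yields the claimed resolvent formula, and substituting into the second block identity yields the stated expression for $p_2(\tau)$. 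The converse direction is the routine computation: given such an isometry, define $h(\tau)$ by the resolvent formula, reverse the algebra to check that $\langle h(\tau),h(\sigma)\rangle$ equals $L(\tau,\overline\sigma)$, and conclude positivity. The main obstacle I anticipate is the well-definedness and extension step: one must verify that the equality of Gram kernels genuinely forces the correspondence $r(\tau)\otimes h(\tau)\oplus p_1(\tau) \mapsto q(\tau)h(\tau)\oplus p_2(\tau)$ to be a bona fide isometry (in particular that it is a function, not a relation) and that it extends to the full space with the prescribed domain and codomain; this is where one invokes that inner products determine vectors up to the closed span, plus a standard Hilbert-space dilation of partial isometries, but it requires care about which subspaces are dense and about enlarging $K$ consistently across all $\tau$ simultaneously.
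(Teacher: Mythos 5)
Your proposal follows essentially the same route as the paper's proof: Kolmogorov factorization, rewriting the cleared-denominator identity as an equality of Gram kernels, a lurking-isometry extension to a full block isometry (after padding $K_2$), and resolvent inversion via the bound $|q(\tau)| > \|r(\tau)\| \geq \|A(r(\tau)\otimes I)\|$. The well-definedness and extension issues you flag at the end are precisely the ones the paper addresses by the dimension-enlarging remark, so nothing is missing.
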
 

The above tensor product is the completed Hilbert space tensor product in case the dimension of $K$ is infinite.
If $\dim K < \infty$, in order for such an isometry to exist, we may enlarge the space $K_2$ by a direct summand, so that the dimension of 
the domain is not greater than the dimension of the codomain. This operation is not necessary in the quadrature domains framework, since then $\dim K = \infty$ anyway:
$K$ is the Hilbert space carrying the associated hyponormal operator with rank-one self-commutator.

\begin{proof}
In virtue of Kolmogorov lemma, there exists a finite dimensional Hilbert space $K$ and vectors $x_\tau \in K$, such that
$$ L(\tau, \overline{\sigma}) = \langle x_\tau, x_\sigma \rangle, \ \ \tau, \sigma \in U.$$
For $\tau, \sigma \in U$ one has:
$$ \langle p_1(\tau), p_1(\sigma) \rangle - \langle p_2(\tau), p_2(\sigma)\rangle =  \langle x_\tau, x_\sigma \rangle (q(\tau)\overline{q(\sigma)} - \langle r(\tau), r(\sigma)\rangle),$$
or equivalently
$$\langle q(\tau) x_\tau, q(\sigma) x_\sigma \rangle + \langle p_2(\tau), p_2(\sigma)\rangle = \langle r(\tau) \otimes x_\tau, r(\sigma) \otimes x_\sigma \rangle +  \langle p_1(\tau), p_1(\sigma) \rangle.$$
In other terms, the Gram matrices of the family of vectors indexed by $\tau \in F$:
$$ (q(\tau) x_\tau, p_2(\tau)) \in K \oplus K_2,$$
and
$$ (r(\tau) \otimes x_\tau, p_1(\tau)) \in (K_3 \otimes K) \oplus K_1,$$
are identical.

Consequently there exists a partial isometry 
mapping one system of vectors to another. The above linear map is independent of $\tau$ and it is isometric on the linear span of the vectors  $(r(\tau) \otimes x_\tau, p_1(\tau)), \tau \in U.$
The only obstruction to extend it to a full isometry is the possible dimension difference between $(K_3 \otimes K) \oplus K_1$ and $K \oplus K_2$. As already mentioned, we can enlarge the Hilbert space $K_2$ by a direct summand, so that $\dim [(K_3 \otimes K) \oplus K_1] \leq \dim [K\oplus K_2]$. In this way we can assume that the block operator (\ref{block})  is an isometry.

We find:
$$ q(\tau) x_\tau = A ( r(\tau) \otimes x_\tau) + B p_1(\tau), \ \ \tau \in F, $$
and
$$ p_2(\tau) = C ( r(\tau) \otimes x_\tau) + D p_1(\tau), \ \tau \in F.$$
From the first equation we infer
$$ x_\tau = [q(\tau)I - A (r(\tau) \otimes I)]^{-1} B p_1(\tau).$$
Note that the operator $q(\tau)I  - A (r(\tau)\otimes I : K_3 \longrightarrow K_3$ is invertible for all $\tau \in U$ because
$|q(\tau)| > \| r(\tau) \| \geq \| A (r(\tau) \otimes I \|, \ \tau \in U,$ by assumption and the contractivity of $A$.

We define the analytic function $$X(\tau) = [q(\tau)I - A (r(\tau) \otimes I)]^{-1} B p_1(\tau) \in K, \tau \in U.$$
In particular
$$ q(\tau) X(\tau) = A ( r(\tau) \otimes X(\tau)) + B p_1(\tau), \ \ \tau \in U, $$
so that
$$  p_2(\tau) = C ( r(\tau) \otimes X(\tau)) + D p_1(\tau), \ \tau \in U,$$
as desired.

Conversely, if the isometry (\ref{block}) exists, we define $X(\tau)$ as above and note that the Gram matrices of the systems of vectors  $(q(\tau) X(\tau), p_2(\tau))$
and $(r(\tau) \otimes X(\tau), p_1(\tau))$ coincide on $U$. Hence 
$$ \langle p_1(\tau), p_1(\sigma) \rangle - \langle p_2(\tau), p_2(\sigma)\rangle =  \langle X(\tau), X(\sigma) \rangle (q(\tau)\overline{q(\sigma)} - \langle r(\tau), r(\sigma)\rangle),$$
for every $(\tau,\sigma) \in U \times U$. We infer that the kernel $L$ is positive semi-definite.
\end{proof} 

The advantages of the Hilbert space factorization implemented by such a linear system governed by an isometric transform were recognized early in linear control theory. Later on, classical results of bounded analytic interpolation, such as the Nevanlinna-Pick Theorem, or Carath\'eodory-Fej\'er Theorem, were enhanced by this matrix analysis approach. We refer to \cite{Agler-McCarthy-Young} for details. 

To give the quintessential example of Hilbert space factorization, consider Szeg\"o's type kernel
$$ L(\tau, \sigma) = \frac{1}{\tau \overline{\sigma} -1}, \ \ |\tau|, |\sigma| >1.$$
Let $S e_n = e_{n+1}, \ \ n \geq 0,$ be the unilateral shift, acting on a Hilbert space with orthonormal basis $(e_n)_{n=0}^\infty$.
The Neumann series expansion leads to the factorization:
$$  \frac{1}{\tau \overline{\sigma} -1} = \sum_{k=0}^\infty \frac{1}{\tau^{k+1} \overline{\sigma}^{k+1}} = $$ $$ 
\sum_{k,\ell =0}^\infty \frac{ \langle e_k ,  e_\ell \rangle}
{ \tau^{k+1} \overline{\sigma}^{\ell+1}} = \langle (S-\tau)^{-1} e_0,  (S-\sigma)^{-1} e_0 \rangle, \ \ |\tau|, |\sigma| >1.$$
The isometry $V: H \oplus \C \longrightarrow H$ defined as
$$ V( f, a) = Sf - a e_0, \ \ f \in H, a \in \C,$$ 
implements the identity stated above. Indeed,
$$ V ( (S-\tau)^{-1} e_0, 1) = S (S-\tau)^{-1} e_0 - e_0 = \tau (S-\tau)^{-1} e_0.$$
In the notation of the Proposition, $A=S$, $B = - e_0 \otimes I, C = 0, D = 0.$ Moreover $q(\tau) = \tau, p_1(\tau) = 1, p_2 = 0.$ The verification
is straightforward:
$$ X(\tau) = (\tau   - S)^{-1} (-e_0).$$

Minor modifications in the proof of the Proposition allow a relaxation of the statement, from a scalar valued numerator to a matrix valued one. Specifically, one can assume $p_1(\tau), p_2(\tau)$ to be polynomial functions with values $n \times n$ matrices. Then the numerator becomes $p_1(\sigma)^\ast p_1(\tau) - p_2(\sigma)^\ast p_2(\tau).$ This observation is relevant for the analysis of the $L$-kernel associated to a quadrature domain.

Leaving aside the positivity of the kernel $L$ associated to a quadrature domains, we mention the existence of an array of realization formulae of $L$ as a ``compressed resolvent" of a pencil of finite matrices. Such representations of rational functions depending on several variables are rooted in the theory of formal languages and control theory. We refer to Table 1 in \cite{Stefan-Welters} for an overview of the theory.

 \section{Two disks} The rest of the present article deals with the example of two disks.
 Starting with the merging formula (\ref{merging-5}) we investigate the kernel $L$ associated to the union $U$ of two disks
$\D(a,r)$ and $\D(b,s)$. A short computation yields:
$$ L_U(w,z; u,v) E_U(w,u) = $$ $$\frac{ s^2((v-a)(\overline{z}- \overline{a}) - r^2)((w-a)(\overline{u}- \overline{a}) - r^2) + r^2((w-b)(\overline{z}- \overline{b}) - s^2)((v-b)(\overline{u}- \overline{b}) - s^2)}
{(w-a)(\overline{z}- \overline{a})(v-a)(\overline{u}- \overline{a}) (w-b)(\overline{z}- \overline{b})(v-b)(\overline{u}- \overline{b}) }.$$

One can specialize to a pair of symmetric disks: $a = -b >0$ and $s=r = 1$. Then
$$ L_U(w,z; u,v) E_U(w,u) =  $$ $$\frac{ 2 w\overline{z} v \overline{u}  + 2 a^2(vw+ \overline{u} \overline{z} + (w+ v)(\overline{z} + \overline{u})) - (w+ v)(\overline{z} + \overline{u}) + 2 (a^2-1)}
{(w^2- a^2)(\overline{z}^2-a^2) (v^2-a^2) (\overline{u}^2-a^2)}.$$
On the other hand
$$ E_U(w,u) = [1-\frac{1}{(w-a)(\overline{u}-a)}][1-\frac{1}{(w+a)(\overline{u}+a)}] = \frac{(w\overline{u} + a^2 -1)^2 - a^2 (w + \overline{u})^2}{(w^2- a^2)(\overline{u}^2-a^2)}.$$
Consequently
$$ L_U(w,z; u,v)  =  $$ $$\frac{ 2 w\overline{z} v \overline{u}  + 2 a^2(vw+ \overline{u} \overline{z} + (w+ v)(\overline{z} + \overline{u})) - (w+ v)(\overline{z} + \overline{u}) + 2 (a^2-1)^2}
{(\overline{z}^2-a^2) (v^2-a^2)((w\overline{u} + a^2 -1)^2 - a^2 (w + \overline{u})^2)}.$$

We run  a first iteration of the recurrence scheme appearing in Theorem \ref{matrix-chain}. First, the minimal polynomial is $p(z) = z^2 - a^2$ and the joint exponential transform
satisfies
$$ |z^2-a^2|^2 (1 - E(w,z)) = |z^2-a^2|^2 - (|z-a|^2 -1)(|z+a|^2-1) = 2|z|^2 + 2a^2 -1.$$
A first condition, derived from assertion a) in the theorem is: $2a^2 -1 > 0$, that is $$a > \frac{1}{\sqrt{2}}.$$ 
This is of course not sufficient for the two disks to be disjoint.

Assume $a > \frac{1}{\sqrt{2}}$. We seek a $2\times 2$ matrix $D_0^\ast$, with cyclic vector $\xi$ and spectrum equal to $\{ -a, a\}$, subject to the identification:
$$ |z^2-a^2|^2 \| (D_0^\ast - \overline{z})^{-1} \xi \|^2 = 2|z|^2 + 2a^2 -1.$$
In virtue of Cayley-Hamilton Theorem, the minimal polynomial of this matrix is $z^2-a^2$, so that
$$ \| (D_0^\ast + \overline{z}) \xi \|^2 = 2|z|^2 + 2a^2 -1.$$
By identifying coefficients we infer:
$$ \| \xi \|^2 = 2, \  \langle D_0^\ast \xi, \xi \rangle = 0, \ \ \|D_0^\ast \xi\|^2 = 2a^2-1.$$
We can choose $\xi = (\sqrt{2}, 0)^T$ and consequently
$$ D_0^\ast = \left( \begin{array}{cc}
                       0 & \frac{a^2}{\sqrt{a^2 - \frac{1}{2}}}\\
                       \sqrt{a^2 - \frac{1}{2}} & 0 \\
                       \end{array} \right).$$
According to condition b) in Theorem \ref{matrix-chain}, the matrix $- [D_0^\ast , D_0] + \xi \langle \cdot, \xi \rangle$ must be positive definite. That is, 
$$     \left( \begin{array}{cc}
                       -\frac{a^4}{{a^2 - \frac{1}{2}}}  + (a^2 - \frac{1}{2}) + 2 & 0\\
                       0 & -a^2 + \frac{1}{2} +  \frac{a^4}{{a^2 - \frac{1}{2}}}\\
                       \end{array} \right) > 0.$$                
The conditions for the two diagonal entries to be positive are  $ a > \frac{1}{2}$ and  
$$a > \frac{\sqrt{3}}{2}.$$

Assume $a > \frac{\sqrt{3}}{2}$. We define the positive matrix
$$ A_0 =   \left( \begin{array}{cc}
                       \sqrt{\frac{a^2-\frac{3}{4}}{a^2-\frac{1}{2}}} & 0\\
                      0 & \sqrt{\frac{a^2-\frac{1}{4}}{a^2-\frac{1}{2}}} \\
                       \end{array} \right)$$ 
                       and continue
                       $$ D_1 = A_0^{-1} D_0 A_0 = \left( \begin{array}{cc}
                       0 & \sqrt{\frac{ (a^2-\frac{1}{2}) (a^2 - \frac{1}{4})} { a^2 - \frac{3}{4}}}\\
                      a^2  \sqrt{\frac{a^2-\frac{3}{4}}{ (a^2-\frac{1}{2})(a^2- \frac{1}{4})}}& 0 \\
                       \end{array} \right).$$ 
                       Next we have to assure that the matrix $A_0^2 - [D_1^\ast, D_1]$ is positive. Fortunately this continues to be a diagonal matrix:
                       $$ \left( \begin{array}{cc}
                       \frac{a^2 - \frac{3}{4}}{a^2 - \frac{1}{2}} - \frac{ a^4 (a^2 - \frac{3}{4})}{  (a^2-\frac{1}{2})(a^2- \frac{1}{4})} +  \frac{ (a^2-\frac{1}{2})(a^2- \frac{1}{4})}{a^2 - \frac{3}{4}}& 0\\
                     0 & \frac{a^2-\frac{1}{4}}{a^2 - \frac{1}{2}} + \frac{ a^4 (a^2 - \frac{3}{4})}{  (a^2-\frac{1}{2})(a^2- \frac{1}{4})} -  \frac{ (a^2-\frac{1}{2})(a^2- \frac{1}{4})}{a^2 - \frac{3}{4}}\\
                       \end{array} \right).$$ If positive, this matrix will be $A_1^2$. The substitution $t = a^2 - \frac{1}{2}$ and some calculation on the second diagonal entry 
                       implies
                       $$a^2 > \frac{1}{2} (1 + \frac{1}{\sqrt{2}}).$$
                      In view of Theorem \ref{matrix-chain}, after continuing the process, the sequence of improved lower bounds for $a^2$: $ \frac{1}{2} < \frac{3}{4} <  \frac{1}{2} (1 + \frac{1}{\sqrt{2}}) \ldots$ will necessarily reach the correct value $a^2 \geq 1.$


\section{Gravi-equivalent deformations of overlapping disks}

In this section we give examples of phenomena, related to positive definiteness, occurring when the density function $g:\C\to\R$
is allowed to take values greater than one, but only integer values. This leads to the realm of multi-sheeted algebraic domains,
or quadrature Riemann surfaces.

\subsection{Evolution by level lines}\label{sec:level lines}

As observed in Example~2 in \cite{Gustafsson-Putinar-2005}, and further studied in \cite{Tkachev-2006},
the exponential kernel $1-E_g (z,w)$  for the density $g=\chi_{\D(a_1,r_1)}+\chi_{\D(a_2,r_2)}$  
is positive semidefinite if and only if $r_1^2+r_2^2\leq |a_1-a_2|^2$. 
In the case of equality the two boundary circles intersect under right 
angles, and this fact gives a clear picture of how far one circle can penetrate other circle
while keeping positive semidefiniteness. In particular,
one circle cannot be completely swallowed by another circle.
It may therefore be of interest to see an example of a smooth algebraic domain 
completely contained in another with the positive semi-definiteness preserved.

Such an example can be produced by using level lines of the defining polynomial of two
intersecting circles as above.
We shall concentrate on the special choice $a_j=\pm 1$, $r_j=\sqrt{2}$ ($j=1,2$), for which there are
some nice features when turning to the spherical metric. Let thus
$$
g=\chi_{\D(-1,\sqrt{2})}+\chi_{\D(1,\sqrt{2})}.
$$
It is clear from the mean-value property of harmonic functions that the ``quadrature'' property
\begin{equation}\label{intK}
\int_K  h(z) g(z)dxdy= 2\pi\big(h(-1)+h(1)\big)
\end{equation}
holds for all functions $h$ harmonic in a neighborhood of  $K={\rm supp\,}g$. This relation can be extended to saying that the inequality $\geq $
holds for subharmonic functions $h$, and that inequality is important in certain contexts (for example uniqueness questions).

The two circles, and lines of discontinuity for $g$, coincide with the zero locus of the real polynomial
$$
Q(z,{z})=\big(|z-1|^2-2\big)\big(|z+1|^2-2\big)
$$
$$
=z^2\bar{z}^2-4z\bar{z}-z^2-\bar{z}^2+1=R(z,\bar{z}).
$$
Here we have on the last line have related $Q(z,z)$ to the complex analytic polynomial $R(z,w)$
introduced in Section~\ref{sec:intro}.

It will be good to notice that $Q(z,{z})$ has critical points at $z=0$ (local maximum with $Q=1$) and $z=\pm \sqrt{3}$
(local minima with $Q=-8$). For convenience, values of $g$ and signs of $Q$ are indicated in Figure~\ref{fig:circles}.
It also good to notice that
\begin{equation}\label{EQP}
E_g(z,z)=E_{\D(-1,\sqrt{2})}(z,z) E_{\D(1,\sqrt{2})}(z,z)=\frac{Q(z,{z})}{|z^2-1|^2} \quad(|z|>>1).
\end{equation}  


\begin{figure}
\begin{center}
\includegraphics[width=\linewidth]{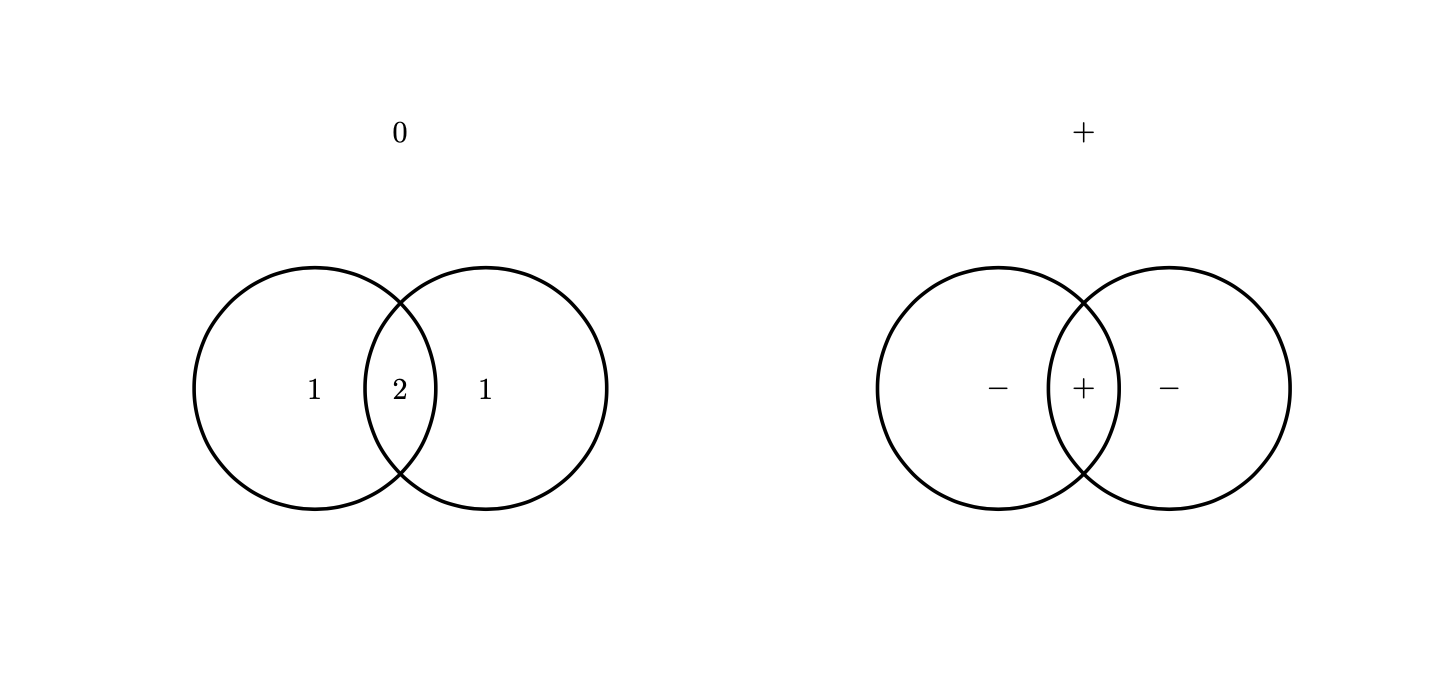} 
\end{center}
\caption{Left: values of $g(z)$. 
Right:  signs of $Q(z,{z})$.}
\label{fig:circles}
\end{figure} 


Below we shall consider more general level lines of $Q(z,{z})$, and to this purpose 
we consider, as before, the sub-level sets
$$
\Omega(t)=\{z\in\C; Q(z,{z})<t\}.
$$
All real values of $t$ are of interest here, but we shall mainly focus on those in the interval $0\leq t\leq 1$.
The boundary $\Gamma(t)=\partial\Omega(t)$ is a plane algebraic curve, and it has an associated Riemann surface, which
can be identified with  the complex algebraic curve $R(z,w)=t$ after completion and desingularization of the latter.
This Riemann surface is ``real'' in the sense that it has an anticonformal involution, namely 
$(z,w)\mapsto (\bar{w},\bar{z})$, and the real locus $\Gamma(t)$ of the curve is essentially the fixed 
point set of this involution (the locus may also contain some isolated accidental points). 
The Riemann surface has genus one except for some special values of $t$:
for $t=1$ the genus is zero, and for $t=0$ and for $t=-8$ it is disconnected with each component having genus zero.  

As $t$ increases from $t=0$, the two intersecting circles in Figure~\ref{fig:circles} become
two smooth curves, and the disconnected set where $Q(z,{z})<0$ becomes the
doubly connected and smoothly bounded domain $\Omega(t)$, see Figure~\ref{fig:level sets}. 
We define the corresponding density function $g_t$ to be one in $\Omega(t)$, and to be two in the hole
(that is, in the bounded component of $\C\setminus \Omega(t)$):
\begin{equation}\label{rhot}
g_t=
\begin{cases}
1 \quad\text{in }\Omega(t),\\
2\quad\text{in the hole}. 
\end{cases}
\end{equation}
The domain $\Omega(t)$ is actually only part of a multi-sheeted domain, which also covers the hole with two sheets
connected to each other via branch points in the hole. This is to explain (\ref{rhot}).
Accordingly, it will be seen in the proof below that as a result of integration
with respect to the weight $g_t$, the inner boundary component of $\Omega(t)$ is to be oriented as the boundary of the hole.
See specifically equation (\ref{hole}).
 
As $t\nearrow 1$ the hole is gradually filled in and eventually becomes a residual isolated point, the origin  $\{0\}$.
Ignoring that point (which is a ``special point'' in the theory of quadrature domains \cite{Shapiro-1992, Gustafsson-Shapiro-2005})
we have
$$
g_1=\chi_{\Omega(1)},
$$
where $\Omega(1)$ is a well-studied simply connected two-point quadrature domain, a Neumann oval or hippopede.

Setting $P(z)=z^2-1$ we see that
$$
|P(z)|^2-\big(Q(z,{z})-t\big)=4|z|^2+t
$$
is positive semidefinite for all $t\geq 0$.
The left member in this expression is connected to the exponential transform $E_{g_t}(z,{z})$ by
$$
E_{g_t}(z,{z})=\frac{Q(z,{z})-t}{|P(z)|^2} \quad (|z|>>1),
$$
compare (\ref{EQP}) and see further  \cite{Gustafsson-Tkachev-2011}.

It will be of major interest to notice that the integer-valued densities $g_t$, $0\leq t\leq  1,$
all enjoy the same quadrature identity as (\ref{intK}). We state this formally as 

\begin{proposition}
For all $0\leq t\leq 1$, the quadrature identity
\begin{equation}\label{qirho}
\int h(z)g_t(z)dxdy=2\pi\big( h(-1)+h(1))
\end{equation}
holds for functions $h$ which are harmonic over the support of $g_t$. 
\end{proposition}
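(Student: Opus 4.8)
The plan is to exploit a phenomenon that is special to this family: although the full complex moments $\int \zeta^{k}\overline{\zeta}^{\,n}g_{t}\,dA$ genuinely depend on $t$ for $n\ge 1$, the \emph{harmonic} moments ($n=0$) do not. Concretely, I will show $\int \zeta^{k}g_{t}(\zeta)\,dA(\zeta)=2\pi\bigl(1+(-1)^{k}\bigr)$ for every $t\in[0,1]$ and every $k\ge 0$, which is exactly $2\pi\bigl(h(-1)+h(1)\bigr)$ for $h(\zeta)=\zeta^{k}$; the general statement \eqref{qirho} then follows by linearity and approximation.

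First I would reduce to monomial test functions. The support of $g_{t}$ is the closed region bounded by the outer component $\Gamma_{\mathrm{out}}(t)$ of $\Gamma(t)$: a closed Jordan domain for $0<t<1$, the union of the two closed disks $\overline{\D(\pm1,\sqrt2)}$ at $t=0$, and the closed Neumann oval $\overline{\Omega(1)}$ at $t=1$. In all cases this set is compact with connected complement, so by the harmonic analogue of Runge's theorem every $h$ harmonic in a neighbourhood of $\operatorname{supp}g_{t}$ is a uniform limit there of harmonic polynomials, and a real harmonic polynomial is $\operatorname{Re}F$ for a holomorphic polynomial $F$. Since $g_{t}$ is real and $\pm1$ are real points, it therefore suffices to establish the displayed monomial identity.

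Next I would read those numbers off the exponential transform. From $E_{g_{t}}(z,z)=\dfrac{Q(z,z)-t}{|P(z)|^{2}}$ with $P(z)=z^{2}-1$, together with $|P(z)|^{2}-(Q(z,z)-t)=4|z|^{2}+t$, one gets $1-E_{g_{t}}(z,z)=\dfrac{4|z|^{2}+t}{|z^{2}-1|^{2}}$; the unique polynomial of bidegree $(1,1)$ in $(w,\overline{z})$ restricting to $4|z|^{2}+t$ on the diagonal is $4w\overline{z}+t$, and since $E_{g_{t}}$ is rational (analytic in $w$, anti-analytic in $z$) with denominator $P(w)\overline{P(z)}$ near infinity, polarization gives $1-E_{g_{t}}(w,z)=\dfrac{4w\overline{z}+t}{(w^{2}-1)(\overline{z}^{2}-1)}$ for $|w|,|z|\gg1$. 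On the other hand, expanding $E_{g_{t}}(w,z)=\exp\!\bigl(-\tfrac1\pi\int\frac{g_{t}\,dA}{(\zeta-w)(\overline{\zeta}-\overline{z})}\bigr)$ as $\overline{z}\to\infty$ yields $1-E_{g_{t}}(w,z)=-\dfrac{1}{\pi\overline{z}}\int\frac{g_{t}(\zeta)\,dA(\zeta)}{\zeta-w}+O(\overline{z}^{\,-2})$. Matching the coefficients of $\overline{z}^{\,-1}$ gives the Cauchy transform of the mass distribution,
$$
\int \frac{g_{t}(\zeta)\,dA(\zeta)}{\zeta-w}=-\frac{4\pi w}{w^{2}-1}=-2\pi\Bigl(\frac{1}{w-1}+\frac{1}{w+1}\Bigr),
$$
which is independent of $t$; expanding both sides in powers of $1/w$ at infinity gives $\int\zeta^{k}g_{t}\,dA=2\pi\bigl(1+(-1)^{k}\bigr)$, hence $\int Fg_{t}\,dA=2\pi\bigl(F(1)+F(-1)\bigr)$ for polynomials, and \eqref{qirho} after taking real parts and passing to the limit.

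The point requiring the most care is the rationality input of the displayed shape on a single fixed neighbourhood of infinity (the ``quadrature Riemann surface'' fact quoted just before the statement, with $P(z)=z^{2}-1$), together with the two degenerate parameters: at $t=0$ the identity is just the mean value property applied to the two disks of area $2\pi$, and at $t=1$ the hole has collapsed to the special point $\{0\}$, which carries no area mass, so the simply connected oval case applies. A more geometric variant -- the one foreshadowed by the orientation remark preceding \eqref{hole} -- would instead write $\int\zeta^{k}g_{t}\,dA=\int_{\Omega(t)}\zeta^{k}\,dA+2\int_{\mathrm{hole}}\zeta^{k}\,dA$, convert each piece by the complex Green formula $\int_{D}f\,dA=\tfrac{1}{2i}\oint_{\partial D}f(\zeta)\overline{\zeta}\,d\zeta$, replace $\overline{\zeta}$ on each boundary component by the appropriate branch of the algebraic Schwarz function cut out by $R(\zeta,w)=t$, and evaluate by residues at $\pm1$ (residue $2$ apiece); the subtle step there is precisely that the inner boundary must be oriented as the boundary of the hole, so that, after the doubling of the hole, the two boundary contours combine into both components of $\Gamma(t)$ traversed positively and the branch-point contributions inside the hole cancel.
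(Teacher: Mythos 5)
Your first argument is correct, but it travels a genuinely different road from the paper's. The paper deliberately avoids invoking the diagonal exponential--transform formula $E_{g_t}(z,z)=(Q(z,z)-t)/|P(z)|^2$ as an input (it is stated with a pointer to the literature, and the stated purpose of the Proposition is to ``give a direct proof in the present case''); instead the paper inserts the two single-valued branches $S_t^{\pm}$ of the algebraic Schwarz function into the boundary integrals over $\partial\Omega(t)_{\rm outer}$ and $\partial({\rm hole})$, carefully oriented so that the ambiguous square-root terms combine over $\partial\Omega(t)$ and the residues at $z=\pm 1$ produce $4\pi\I(h(-1)+h(1))$. Your route accepts the diagonal formula, polarizes, matches the $O(1/\bar z)$ coefficient to read off the Cauchy transform $-2\pi\big(\tfrac{1}{w-1}+\tfrac{1}{w+1}\big)$, and then quotes Runge for harmonic functions to pass from monomials to general $h$. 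This is slick, and it demonstrates the paper's larger theme that the exponential transform is the right certificate for the quadrature structure; but it is not ``direct'' in the paper's sense, since the diagonal formula itself is essentially an encoding of the multi-sheeted quadrature property for which \cite{Gustafsson-Tkachev-2011} is cited, so your proof relocates rather than removes the nontrivial input. One small tightening in your polarization step: you do not actually need to assume in advance that $E_{g_t}(w,z)$ is rational with denominator $P(w)\overline{P(z)}$ --- the expression $(4w\bar z+t)/\big((w^2-1)(\bar z^2-1)\big)$ is holomorphic in $w$, anti-holomorphic in $z$, and agrees with $1-E_{g_t}$ on the diagonal $w=z$, which is a maximal totally real submanifold, so the identity theorem already forces equality on the bidisk near infinity. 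Your closing paragraph sketches exactly the paper's Schwarz-function argument, including the key orientation convention for the inner boundary, so you evidently also saw the intended route; the one subtlety you compress is that the $t$-dependence under the square root drops out only after restricting the integrand to the residues at $z=\pm 1$, and that the branch-point contributions inside the hole vanish because the cut lies entirely within the hole.
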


Such a statement is implicitly contained in \cite{Gustafsson-Tkachev-2011}, with a
proof based on conformal mapping from an abstract uniformizing Riemann surface 
together with a residue calculation. However,  the statements in \cite{Gustafsson-Tkachev-2011} mainly refer to the
spherical metric, and in that case the locations and strengths of the quadrature nodes depend on $t$ and are less explicit.
Therefore it might be in order to give a direct proof in the present case.

\begin{proof}
Since the support of $g_t$ is simply connected it is enough to consider analytic test functions $h$ in the identity (\ref{qirho}).  
Solving the algebraic equation $R(z,\bar{z})=t$ for $\bar{z}$ gives 
\begin{equation}\label{Stz}
\bar{z}=\frac{1}{z^2-1}\Big(2z\pm\sqrt{(z^2+1)^2+t(z^2-1)}\Big),
\end{equation}
where the right member is an expression for the Schwarz function $S_t(z)$ of $\Gamma(t)$.
This means that, selecting the appropriate branch, $S_t(z)$ is analytic in a neighborhood of $\Gamma(t)$
and satisfies $S_t(z)=\bar{z}$ on $\Gamma(t)$.  See \cite{Davis-1974, Shapiro-1992} in general. 

In the present case the Schwarz function is algebraic, 
and it has two poles and four branch points. The branch points are obtained from 
$z^4+(2+t)z^2+1-t=0$, which gives 
$$
z=\pm\frac{1}{\sqrt{2}}\sqrt{-2-t \pm \sqrt{t(t+8)}}.
$$
In the interval $0<t<1$ all four branch points lie on the imaginary axis, say
$$
z=\pm \I A, \pm \I B, \quad 0<A<B.
$$
Two of them (namely $\pm \I A$) are inside the hole mentioned at (\ref{rhot}), the two other are
outside $\Omega(t)$. For $t=0$ the branch points fuse in pairs, to become $z=\pm \I$, the intersection points
for the two circles. For $t=1$ the two branch points inside the hole have fused to become the special 
point $z=0$.

It is possible to extend the analysis to the parameter range $-8<t<0$ with somewhat similar results, but it then
becomes more complicated because one has to treat unbounded multi-sheeted domains, and also involve
the four variable exponential transform. And, as mentioned,
further extensions to all real values of $t$ is also possible, with the additional difficulty that one runs into symmetric 
Riemann surfaces which are doubles of non-orientable surfaces (real Riemann surfaces of non-dividing type,
in a different terminology).
For example, when $t<-8$ then there is no real locus at all of the equation $Q(z,{z})=t$, and the corresponding
Riemann surface can be considered as being the double of a Klein's bottle. For $t>1$ the Riemann surface is the double
of a M\"obius band, and there is only one curve in the real, despite the surface has genus one. For $-8<t<0$, as well as for
$0<t<1$, the real locus consists after all of two curves, which what one expects for a curve of genus one.
See \cite{Gustafsson-Tkachev-2011} for further discussions.

Referring to (\ref{rhot}) the treatment of the left member of   (\ref{qirho}) may be started as follows:
$$
2\I\int h(z) g_t(z)dxdy=\int h(z)g_t(z)d{\bar{z}}dz
$$
$$
=\int_{\partial\Omega(t)}h(z)\bar{z}dz+2\int_{\partial (\rm hole)} h(z)\bar{z}dz
$$
\begin{equation}\label{hole}
=\int_{\partial\Omega(t)_{\rm outer}}h(z)\bar{z}dz+\int_{\partial (\rm hole)} h(z)\bar{z}dz.
\end{equation}
Here ``${\rm hole}$'' refers to the bounded component of $\C\setminus \Omega(t)$, or equivalently the region where $g_t=2$.
The inner boundary of $\partial\Omega(t)$ coincides as a set with the boundary of the hole, but it has the opposite orientation.
This explains the formula  (\ref{hole}), where thus both curves are to be oriented counter-clockwise.

In (\ref{hole}) we wish to insert the Schwarz function and compute everything
by residues. The poles of the Schwarz function, $z=\pm 1$, are outside the hole, but the branch points 
inside the hole complicate matters. They make that branch of $S_t(z)$ which equals $\bar{z}$ on the outer boundary
to continue analytically through the boundary of the hole, then pass through a branch cut $[-\I A, \I A]$ and eventually reach
the boundary of the hole again, then from inside and on the other sheet. 
\begin{figure}
\includegraphics[width=\linewidth]{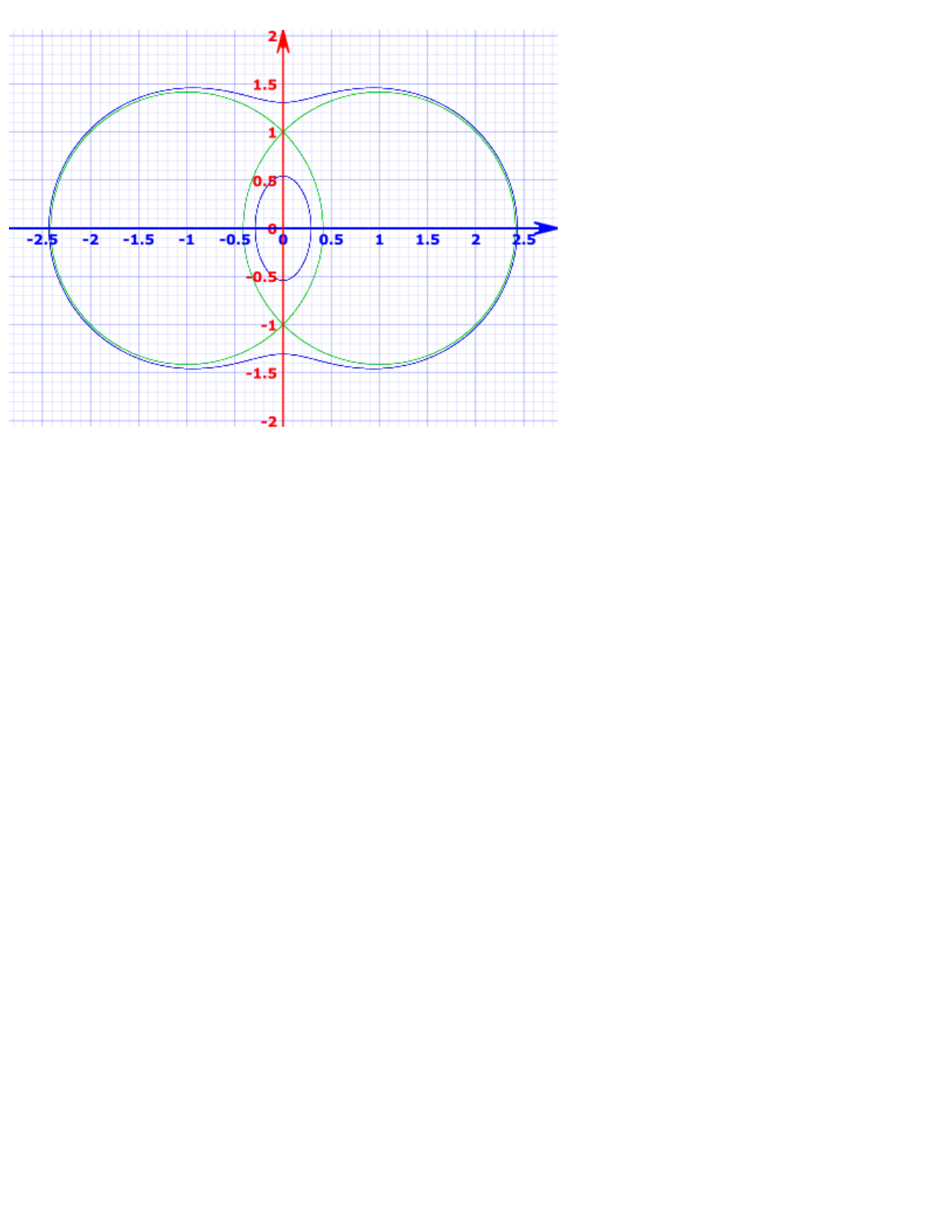}
\caption{The level sets of $Q(z,z)$ corresponding to $t=0, 1/2$.} 
\label{fig:level sets}
\end{figure}

The conclusion is that in the region $\Omega(t)$ there are two single-valued branches of the Schwarz function,
one of which satisfying $S_t(z)=\bar{z}$ on the outer boundary, the other satisfying the same identity on the boundary of the hole.

Inserting the two branches of the Schwarz function, denoted $S_t^\pm(z)$ below,
the computation runs as follows, where the signs of the final square roots are up to interpretation 
(since the variables are complex it does not really make sense to assign a plus or a minus to a square root).
$$
\int_{\partial\Omega(t)_{\rm outer}}h(z)\bar{z}dz+\int_{\partial (\rm hole)} h(z)\bar{z}dz
$$
$$
=\int_{\partial\Omega(t)_{\rm outer}}h(z)S_t^+(z)dz+\int_{\partial (\rm hole)} h(z)S_t^-(z)dz
$$
$$
=\int_{\partial\Omega(t)_{\rm outer}}\frac{h(z)}{z^2-1}\big(2z+\sqrt{(z^2+1)^2+t(z^2-1)}\big)dz
$$
$$
+\int_{\partial({\rm hole})}\frac{h(z)}{z^2-1}\big(2z-\sqrt{(z^2+1)^2+t(z^2-1)}\big)dz
$$
$$
=\Big(\int_{\partial\Omega(t)_{\rm outer}}+\int_{\partial({\rm hole})}\Big)\,\frac{h(z)}{z^2-1}\cdot2z dz
$$
$$
+\Big(\int_{\partial\Omega(t)_{\rm outer}}-\int_{\partial({\rm hole})}\Big)\,\frac{h(z)}{z^2-1}\sqrt{(z^2+1)^2+t(z^2-1)}dz
$$
$$
=2\pi\I \sum_{z=\pm 1}\res \frac{2zh(z)dz}{z^2-1}+0 \,\,\, \text{(there is no residues in the hole) }+
$$
$$
+\int_{\partial{\Omega(t)}}\frac{h(z)}{z^2-1}\sqrt{(z^2+1)^2+t(z^2-1)}dz.
$$
In the last term there are residues again, at $z=\pm 1$. But evaluated at these points we see that the dependence of $t$
disappears, and in addition the square root resolves into $\pm (z^2+1)$. Thus, choosing the right sign we 
can continue as
$$
=2\pi\I \sum_{z=\pm 1}\res \frac{2zh(z)dz}{z^2-1}+
2\pi\I \sum_{z=\pm 1}\res \frac{(z^2+1)h(z)dz}{z^2-1}
$$
$$
=4\pi\I (h(-1)+h(1)),
$$
as desired.
\end{proof} 


\subsection{Evolution by smashing}\label{sec:smashing}

Besides the above evolution from $g_0$ to $g_1$, there is an evolution by ``smashing'' exceeding
mass, or partial balayage in a different terminology. We may denote the densities in this case $\rho_t$,
$0\leq t\leq 1$, where $\rho_0=g_0$ and $\rho_1=g_1$, while $\rho_t\ne g_t$ for $0<t<1$. The
$\rho_t$ are absolutely continuous measures, but the densities are not integer valued when $0<t<1$. Instead
the density $2$ in the overlapping part of the original two disks is squeezed to become $2-t$ in that
part, and the exceeding mass is lumped outside the disks with density one.  The governing rule is that, 
under the above restriction of values of $\rho_t$, the identity
$$
\int h\rho_0 dxdy= \int h \rho_t dxdy
$$
shall hold for functions $h$ harmonic over the support of $\rho_t$. Actually one should also require the
inequality $\leq $ for subharmonic $h$, but in many cases this is automatic.

The theory and construction for this type of process is originally due to M.~Sakai \cite{Sakai-1982}, but
have later been developed by other authors.
The most natural defining property of the process is that the energy of $\rho_t-\rho_0$ is minimized
under the constraint that 
$$
\rho_t\leq
\begin{cases}
2-t &\text{in\, } \D(-1,\sqrt{2})\cap \D(1,\sqrt{2}),\\
1 &\text{outside\, }\D(-1,\sqrt{2})\cap \D(1,\sqrt{2})\end{cases}.
$$
The mentioned (Newtonian) energy becomes a Dirichlet integral (or Sobolev norm)
when formulated in terms of potentials, and this leads to variational inequality formulations, as well as formulations
in terms of obstacle problems, for the construction of $\rho_t$. See \cite{Gustafsson-2004} for an overview. 
Equivalent formulations exist within
the theory of weighted equilibrium distributions as developed by E.~Saff and V.~Totik \cite{Saff-Totik-1997},
and there are also probabilistic formulations (diffusion limited aggregation, DLA), see \cite{Levine-Peres-2010}. 
The terminology of smashing goes back to \cite{Diaconis-Fulton-1990}.

The relation (\ref{rhot}) turns in the case of $\rho_t$ into 
\begin{equation}\label{sigmat}
\rho_t=
\begin{cases}
2-t & \text{in\, } \D(-1,\sqrt{2})\cap \D(1,\sqrt{2}),\\
1 &\text{in\, }D(t),
\end{cases}
\end{equation}
for a certain doubly connected domain $D(t)$ which surrounds and attaches to the overlapping disks.
Obviously $\rho_t\ne g_t$  for $0<t<1$, but a less trivial fact is that not even the outer
boundaries of $D(t)$ and $\Omega(t)$ are the same.
One argument to show this runs as follows (just a sketch).

Suppose these two outer boundaries were the same. Then ${\rm supp\,}\rho_t={\rm supp \,} g_t$,
and $\rho_t$ and $g_t$ would differ only inside  $\D(-1,\sqrt{2})\cap \D(1,\sqrt{2})$. In that region
we have $\rho_t=2-t$, while by (\ref{rhot})
$$
g_t=
\begin{cases}
2& \text{in\, } \big(\D(-1,\sqrt{2})\cap \D(1,\sqrt{2})\big)\setminus \Omega(t),\\
1 &\text{in\, }\Omega(t)\cap \big(\D(-1,\sqrt{2})\cap \D(1,\sqrt{2})\big).
\end{cases}
$$

Subtracting one unit it would follow that the density $1-t$ (originating from $\rho_t$) would be ``gravi-equivalent''
to the density $\chi_{\rm hole}$ (referring here to the time dependent ``hole'' in 
equation (\ref{rhot})). Observe that this hole is compactly contained in $\D(-1,\sqrt{2})\cap \D(1,\sqrt{2})$, and that 
this latter domain has corners, hence has non-smooth boundary.
It is easy to see that the above, hypothesized, situation contradicts existing theory \cite{Friedman-1982, Petrosyan-Shahgholian-Uraltseva-2012}
for regularity of free boundaries in obstacle type problems: the outer boundary  $\partial\big(\D(-1,\sqrt{2})\cap \D(1,\sqrt{2})\big)$
would have to be analytic.
Obviously it is not analytic, so the assumption that the outer boundaries of $D(t)$
and $\Omega(t)$ are the same must be wrong. The outer boundary of $D(t)$ is still analytic, but probably not
algebraic, in contrast to the boundary of $\Omega(t)$.
  

\subsection{Spherical metric}\label{sec:spherical} 

It is interesting to recast the entire situation using the spherical area measure in place of the Euclidean one. 
It is well-known that the class of classical quadrature domains then remains the same, but strengths and locations 
of quadrature nodes are affected. 

Recall first that classical (finite point) quadrature domains are characterized by the Schwarz function of the boundary
being meromorphic inside the domain \cite{Aharonov-Shapiro-1976, Davis-1974}. The heart of the computation is
$$
\int_\Omega h(z)d\bar{z}dz=\int_{\partial \Omega}h(z)\bar{z}dz=\int_{\partial \Omega}h(z)S(z)dz
$$
$$
=2\pi\I\sum_{{\rm poles\,in\,}\Omega}\res h(z)S(z)dz.
$$
The poles are those of $S(z)$, and $h(z)$ is assumed to be holomorphic in $\Omega$.
In the case of simple poles the sum above will simply becomes that sum of the residues of $S(z)$
times the values of $h(z)$.

For the spherical  metric the corresponding computation becomes
$$
\int_\Omega h(z)\frac{d\bar{z}dz}{(1+|z|^2)^2}
=\int_{\partial\Omega}  \frac{h(z)\bar{z}dz}{1+z\bar{z}}
=\int_{\partial\Omega} \frac{h(z)S(z)dz}{1+zS(z)}
$$
$$
=2\pi\I\sum_{{\rm poles\,in\,}\Omega}\res \frac{h(z)S(z)dz}{1+zS(z)}.
$$
It is now the poles of $S(z)/(1+zS(z))$ that count, and typically these will come from just the zeros of $1+zS(z)$.
See \cite{Gustafsson-2023} for detailed discussions and computations leading to fairly explicit formulas
for two point quadrature domains as in the present paper. The hippopede $\Omega(1)$ mentioned in 
Section~\ref{sec:level lines} is indeed the inversion of a classical quadric, the ellipse. 

Returning to the original problem with the two overlapping disks of radii $\sqrt{2}$, the total area of these is $4\pi$.
Even though it is not really relevant, we note that the unit sphere embedded in three space also has area $4\pi$.
After a stereographic projection from the sphere one obtains the area form
$$
\frac{4dxdy}{(1+(x^2+y^2))^2}=\frac{2\I dzd\bar{z}}{(1+|z|^2)^2}
$$
in the complex plane. The spherical area of the disk $\D(0,r)$ then becomes
$$
2\pi \int_0^r\frac{4sds}{(1+s^2)^2}
=\frac{4\pi r^2}{1+r^2}.
$$
Thus, with $r=1$ we get area $2\pi$, and for $r=\sqrt{2}$ 
it becomes $8\pi/3$. If we move the latter disk to become $\D(1,\sqrt{2})$  the area
decreases to $2\pi$, exactly half of the area of the full sphere. This can be proved 
by moving the center to the origin by a rigid M\"obius transformation, see details below.
 
It should be noted that the radius $r$ and the center $a$ for a disk when writing  $\D(a,r)$
refer only to coordinate values. The disk remains a disk also in terms of distances 
as defined via the intrinsic spherical geometry,
but the geodesic radius and corresponding center are then different. 
Our main observation for this special case can be summarized as follows. 

\begin{proposition}
For the two overlapping disks $\D(-1,\sqrt{2})$ and $\D(1,\sqrt{2})$, 
having maximum radius as for positive semi-definiteness of $1-E_g (z,w)$, 
each disk reaches out, when considered as a disk with respect to the spherical metric, exactly to the
geodesic center of the other disk.

In addition, the total spherical area of the disks, counting multiplicities, equals the area of the full sphere,
which therefore therefore after smashing (partial balayage to density one) becomes maximally filled.

After a rotation of the Riemann sphere the geometry of the two disks is exactly that of two orthogonally crossing half-planes.
\end{proposition}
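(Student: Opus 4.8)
The plan is to pass to the Riemann sphere and find a rotation of $S^2$ --- equivalently, a M\"obius transformation $\phi$ in $PSU(2)$ --- that carries the two disks onto a pair of hemispheres whose bounding great circles meet at right angles. In that normal form the three assertions are almost immediate, and since a rotation is an isometry of the round sphere (preserving spherical area and spherical distance), they transport back verbatim to $\D(\pm1,\sqrt 2)$.

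First I would record that $\partial\D(-1,\sqrt 2)$ and $\partial\D(1,\sqrt 2)$ intersect at the two points $\pm i$, and do so orthogonally (this is the borderline case $r_1^2+r_2^2=|a_1-a_2|^2$ of Subsection~\ref{sec:level lines}). I then take
\[
\phi(z)=\frac{z+i}{z-i},
\]
so that $\phi(i)=\infty$ and $\phi(-i)=0$. Because $i$ lies on both boundary circles, $\phi$ carries each of them to a straight line; because $-i$ lies on both, both image lines pass through the origin; and they are orthogonal since $\phi$ is conformal. Verifying the identity $\phi(-1/\bar z)=-1/\overline{\phi(z)}$ shows that $\phi$ sends antipodal pairs on $S^2$ to antipodal pairs, hence $\phi$ is a genuine rotation of the round sphere and takes half-planes bounded by lines through $0$ and $\infty$ to hemispheres. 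Evaluating $\phi$ at the real points $1\pm\sqrt 2$ gives $\phi(1+\sqrt 2)=e^{i\pi/4}$ and $\phi(-1-\sqrt 2)=e^{-i\pi/4}$, so the two image lines are $\{\im z=\re z\}$ and $\{\im z=-\re z\}$; evaluating at the centers, $\phi(1)=i$ and $\phi(-1)=-i$, pins down the images of the disks as the half-planes $\{\im z>\re z\}$ and $\{\im z<-\re z\}$. This pair of orthogonally crossing half-planes is precisely the third assertion.

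The remaining two assertions are then read off in this model. Each of the two half-planes is a spherical hemisphere, hence has spherical area $2\pi$; so each disk has spherical area $2\pi$ and the two together cover the sphere once, i.e.\ the total spherical area counting multiplicity is $4\pi$, the area of the whole sphere. Since the uniform density $1$ on $S^2$ already has this total mass, any measure of density $\le 1$ and total mass $4\pi$ on $S^2$ must equal $dA_{\mathrm{sph}}$; this is the ``maximal filling after smashing'' statement. For the geodesic-center assertion I would use that the geodesic center of a spherical hemisphere is the unique point of that hemisphere fixed by all rotations preserving it: for $\{\im z>\re z\}$ these are the rotations about the axis through $e^{3i\pi/4}$ and $e^{-i\pi/4}$, and of this pair only $e^{3i\pi/4}$ lies in the hemisphere, so that is its center (geodesic radius $\pi/2$); likewise the center of $\{\im z<-\re z\}$ is $e^{-3i\pi/4}$. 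The spherical distance between the equatorial points $e^{3i\pi/4}$ and $e^{-3i\pi/4}$ equals $\pi/2$, which is exactly the common geodesic radius, so the boundary of each hemisphere runs through the geodesic center of the other. Applying the isometry $\phi^{-1}$ transfers all of this to $\D(\pm 1,\sqrt 2)$.

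The one place demanding real care is the claim that $\phi$ is an honest rotation of $S^2$ --- only then are spherical lengths and areas (not merely angles) transported faithfully --- together with the correct location of the geodesic centers of the two image hemispheres; both are short computations but should not be skipped. An alternative route to the area statement alone is to conjugate so that $\D(1,\sqrt 2)$ is moved by a rotation to the disk $\D(0,1)$, a hemisphere of area $2\pi$ (the ``move the center to the origin'' approach); I would still prefer the half-plane normal form since it delivers all three parts at once.
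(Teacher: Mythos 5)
Your argument is correct and, for the main part, follows exactly the paper's route: the same rotation $\phi(z)=\frac{z+i}{z-i}$, the same observation that the two boundary circles become two orthogonal lines through $0$ and $\infty$, and the same "rigid, hence area-preserving" step for the $4\pi$ total. Your check that $\phi$ preserves antipodes ($\phi(-1/\bar z)=-1/\overline{\phi(z)}$) is a clean substitute for the paper's appeal to the parametric form $w=(az+b)/(-\bar b z+\bar a)$; either way it establishes that $\phi$ is a genuine isometry of the round sphere.

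Where you genuinely diverge is the geodesic-center assertion. The paper establishes it with a \emph{second} rigid transformation, $u=\frac{z-(\sqrt 2 -1)}{(\sqrt 2 -1)z+1}$, which moves $z=\sqrt 2 -1$ to the origin and then exhibits $z=-1\pm\sqrt 2$ as mirror points in $\partial\D(1,\sqrt 2)$; this identifies $\sqrt 2 -1$ concretely as the geodesic center of $\D(1,\sqrt 2)$, visibly on $\partial\D(-1,\sqrt 2)$. You instead read everything off in the half-plane normal form: the two hemispheres have geodesic centers at the poles $e^{\pm 3i\pi/4}$ of their bounding great circles, each of which lies on the other hemisphere's boundary, and then pull back by $\phi^{-1}$. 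Your route is more economical (one transformation instead of two, and it proves all three statements from the same picture), at the cost of leaving the explicit location $\sqrt 2 -1$ of the geodesic center in the $z$-plane implicit (it is $\phi^{-1}(e^{3i\pi/4})$, which a short computation confirms to be $\sqrt 2 -1$). Your added justification for "maximally filled after smashing" --- density $\le 1$ and total mass $4\pi$ forces the uniform spherical measure --- makes explicit a step the paper only gestures at, and is worth keeping.

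One small point worth being precise about: when labelling the images of the disks, $\partial\D(1,\sqrt 2)$ maps to $\{\im w=\re w\}$ (check the boundary point $1+\sqrt 2\mapsto e^{i\pi/4}$) and $\partial\D(-1,\sqrt 2)$ to $\{\im w=-\re w\}$, so the disks go to $\{\im w>\re w\}$ and $\{\im w<-\re w\}$ as you state; this is the correct pairing (the paper's displayed half-planes appear to have the boundaries swapped, but this does not affect either argument).
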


\begin{proof}
We may start with the observation that the two circles intersect at $\pm \I$ and that these points are antipodal
points on the Riemann sphere, i.e. are related as $z$ and $-1/\bar{z}$. Such a pair of points can be mapped by a rigid 
M\"obius transformation (rotation of the sphere) to $w=0$ and $w=\infty$. Recall that a general rotation of the Riemann sphere
is described by a M\"obius transformation of the form 
$$
w=\frac{az+b}{-\bar{b}z+\bar{a}}.
$$

We first choose ($a=e^{\I\pi /4}$, $b=\I e^{\I\pi /4}$, for example)
$$
w=\frac{z+\I}{z-\I},
$$
which maps the intersection points $z=\pm \I$ of the circles  to $w=0$ and $\infty$.
The centers $z=\pm 1$ are mapped to $w=\pm \I$ (coupled signs), and
the rightmost point $z=\sqrt{2}-1$ on the circle $\partial\D(-1,\sqrt{2})$ and the leftmost point 
$z=1-\sqrt{2}$ on $\partial\D(1,\sqrt{2})$ are mapped, respectively, to
$$
w=-\frac{1}{\sqrt{2}}(1-\I) \quad \text{and} \quad w=-\frac{1}{\sqrt{2}}(1+\I).
$$
It now follows  that the two circles map to the straight lines $\re w=\pm\im w$.
The disk $\D(1,\sqrt{2})$ then corresponds to the half-plane $\re w+ \im w>0$
and $\D(-1,\sqrt{2})$ to $\re w-\im w>0$, the overlapping part hence becoming a quarter plane.
Since the transformation was rigid it follows that 
the total spherical area (with multiplicities) of the two disks equals the full area of the sphere.

We next claim that the Euclidean disk $\D(1,\sqrt{2})$ is simultaneously a geodesic disk with center $z=\sqrt{2}-1$. 
To show this we use another rigid M\"obius transformation,
$$
u=\frac{z-(\sqrt{2}-1)}{(\sqrt{2}-1)z+1},
$$ 
which moves the presumed center $z=\sqrt{2}-1$ to the origin $u=0$. 
The point $z=-1-\sqrt{2}$ corresponds to  $u=\infty$, and in the $u$-plane these two points are mirror points
with respect to a circle centered at $u=0$. Hence the corresponding points $z=-1\pm\sqrt{2}$ in the $z$-planes are also 
mirror points with respect to a circle (or possibly a straight line). And this can indeed be seen to be
the circle $\partial\D(1,\sqrt{2})$.

The conclusion is that $z=\sqrt{2}-1$ is the geodesic center of the disk $\D(1,\sqrt{2})$, and this point lies
on the boundary of the disk $\D(-1,\sqrt{2})$. Hence we may say that what stops the penetration is that one circle reaches the
geodesic center of the other circle. 
\end{proof}

A final remark is that the word geodesic center can alternatively be replaced by harmonic center 
(quadrature node for a one point quadrature identity), because
these are the same for surfaces of constant curvature, see \cite{Gustafsson-Roos-2018}.


\bibliographystyle{plain}
\bibliography{bibliography_gbjorn}

\end{document}